\newlength{\fighskip} \fighskip=2pt
\newlength{\figvskip} \figvskip=3pt
\numberwithin{equation}{section}
\newcommand{\abracket}[1]{\left\langle#1\right\rangle}
\newcommand{\bbracket}[1]{\left[#1\right]}
\newcommand{\fbracket}[1]{\left\{#1\right\}}
\newcommand{\bracket}[1]{\left(#1\right)}
\newcommand{\LS}[1]{(\!(#1)\!)}
\newcommand{\PS}[1]{[[#1]]}
\newcommand{\mc}{\mathcal}
\newcommand{\pa}{\partial}
\newcommand{\OO}{{\mathcal O}}
\newcommand{\CE}{Chevalley-Eilenberg }
\newcommand{\into}{\hookrightarrow}
\newcommand{\iso}{\cong}
\newcommand{\g}{\mathfrak{g}}
\newcommand{\LR}[1]{{\left(#1\right)}}
\newcommand{\Yang}{{\color{red} Yang: }}
\renewcommand{\d}{d_{2k}}
\renewcommand{\*}{\bullet}
\DeclareMathOperator{\Lie}{Lie}
\DeclareMathOperator{\End}{End}
\DeclareMathOperator{\Sym}{Sym}
\DeclareMathOperator{\Hom}{Hom}
\DeclareMathOperator{\Tr}{Tr}
\DeclareMathOperator{\supp}{supp}
\newcommand{\C}{\mathbb{C}}
\newcommand{\R}{\mathbb{R}}
\newcommand{\Z}{\mathbb{Z}}
\newcommand{\E}{\mathbb{E}}
\newcommand{\W}{\mathcal W}
\newcommand{\orderofg}{p}
\DeclareMathOperator{\gl}{\mathfrak{gl}}
\DeclareMathOperator{\Sp}{Sp}
\DeclareMathOperator{\Aut}{Aut}
\DeclareMathOperator{\GL}{GL}
\DeclareMathOperator{\PGL}{PGL}
\DeclareMathOperator{\tr}{tr}
\DeclareMathOperator{\pr}{pr}
\DeclareMathOperator{\Ch}{Ch}
\DeclareMathOperator{\trace}{tr}
\DeclareMathOperator{\Id}{Id}
\DeclareMathOperator{\ad}{ad}
\DeclareMathOperator{\Conj}{Conj}
\DeclareMathOperator{\mult}{mult}
\DeclareMathOperator{\desc}{desc}
\DeclareMathOperator{\Conf}{Conf}
\DeclareMathOperator{\Cyc}{Cyc}
\newcommand{\uTr}{\widehat{\Tr}}
\newcommand{\gog}{\mathfrak{g}}
\newcommand{\HC}{Harish-Chandra }
\renewcommand{\sp}{\mathfrak{sp}}
\theoremstyle{plain}
\newtheorem{thm}{Theorem}[section]
\newtheorem{lem}[thm]{Lemma}
\newtheorem{prop}[thm]{Proposition}
\theoremstyle{definition}
\newtheorem{defn}[thm]{Definition}
\newtheorem{eg}[thm]{Example}
\theoremstyle{remark}
\newtheorem{rem}[thm]{Remark}
\begin{document}

 \title{\textbf  Topological Quantum Mechanics on Orbifolds and Orbifold Index}
  \author{Si Li,  Peng Yang}
  \date{}

  \maketitle

\begin{abstract} 

In this paper, we study topological quantum mechanical models on symplectic orbifolds.  The correlation map gives an explicit orbifold version of quantum HKR map. The exact semi-classical approximation in this model leads to a geometric and quantum field theoretic interpretation of the orbifold algebraic index.
\end{abstract}

\tableofcontents

\section{Introduction}

In \cite{Localized}, it was shown that correlations in topological quantum mechanics on $S^1$ explains an explicit map from Hochschild chains of Weyl algebras to differential forms on the phase space
$$
\abracket{-}:\quad  C_{-\bullet}(\mathcal{W}_{2n})
\rightarrow
 {\hat{\Omega }^{-\bullet }_{2n}}
$$
which intertwines the Hochschild differential $b$ with the BV operator $\hbar \Delta$, and  intertwines the Connes operator $B$ with the de Rham differential $d$. Here the BV operator $\Delta$ is the Lie derivative with respect the standard Poisson bi-vector. In physics content, the correlation map $\abracket{-}$ is ``integrating out" massive modes to end up with an effective function on zero modes. This gives a quasi-isomorphism between the Hochschild chain complex $(C_{-\bullet}(\mathcal{W}_{2n}),b)$ and the BV complex $ (\hat{\Omega }^{-\bullet }_{2n}, \hbar \Delta)$, which can be viewed as a quantization of the classical HKR map. Composing with a Berezin integration, it gives the Feigin-Felder-Shoiket formula  \cite{trace} for the Hochchild cocycle of Weyl algebra. A further investigation with the $S^1$-action in this model leads to a semi-classical approach to the algebraic index theorem as formulated by Fedosov \cite{Fedbook} and Nest-Tsygan \cite{Nest-Tsygan}.

In this paper, we generalize the above construction to orbifold phase space.  We show that the analogous correlation map gives an explicit formula for an orbifold version of quantum HKR map. The method of exact semi-classical approximation in this model leads to a geometric and quantum field theoretic interpretation of the orbifold algebraic index theorem by Fedosov-Schulze-Tarkhanov \cite{fst} and  Pflaum-Posthuma-Tang \cite{Pflaum2007An}.

\subsection{A one-dimensional \texorpdfstring{$\sigma$}{sigma}-model with orbifold target}
Let us explain how to formulate topological quantum mechanical model with target a linear symplectic orbifold. 

Let $G$ be a finite group which acts linearly on a $2n$-dimensional vector space $V$, and 
 $\omega$ be a symplectic pairing on $V$ which is compatible with the $G$-action.  $(V/G,\omega)$ is the local model of a symplectic orbifold. 

Consider a one-dimensional $\sigma$-model 
$$
S^1\to (V/G,\omega)
$$
describing maps from a circle to a linear symplectic orbifold. We will treat $V/G$ as a stack quotient. Then a map $f:S^1\to V/G$ can be described as 
 a $G$-equivarient map
$$P \to V$$
where $P$ is some $G$-principal bundle on $S^1$. Define the 
associated vector bundle $\mathcal L:= P\times _G V$, then $f$ can be furthermore identified with a section of $\mathcal L$. 

$G$-principal bundles on $S^1$ are all flat, and classified by the conjugate class $\Conj(G):=G/_{\ad}G$. For each $[g]\in \Conj(G)$, with a representative  $g$, there corresponds to a $G$-principal bundle $P_g$ with monodromy $g$ and its associated vector bundle $\mathcal L_g:= P_g\times _G V$. Thus,  the mapping space can be equivalently described by 
$$\bigoplus_{[g]\in \Conj(G)} \Gamma(S^1, \mathcal L_g).$$

Applying the AKSZ \cite{AKSZ} construction, we arrive at the following space of fields 
$$ \mathcal E=
\bigoplus_{[g]\in \Conj(G)} \Omega^\bullet(S^1, \mathcal L_g)
$$
with the BRST differential $d=d_{dR}$ the de Rham differential  and a 
symplectic pairing of degree $-1$
$$  (\varphi_1, \varphi_2):=\int_{S^1} \omega(\varphi_1, \varphi_2), \quad \varphi_1, \varphi_2\in \mathcal E $$

We consider the free action given by 
$$
S[\varphi]=\frac12(\varphi, d\varphi)=\int_{S^1} \omega(\varphi,d \varphi), \qquad \varphi \in \mathcal E.
$$
These data describe a topological quantum mechanical system on $S^1$. 

Denote
$\mathcal E_g=\Omega^\bullet(S^1, \mathcal L_g)$, which is called the $g$-twisted sector. We have
$$\mathcal E=\bigoplus_{[g]\in \Conj(G)}\mathcal E_g.$$
The (-1)-shifted symplectic pairing is nondegenerate on each $\mathcal E_g$. The theory naturally splits into $g$-twisted  theories, each of which has field space $\mathcal E_g$ 
,  corresponding to maps of the circle to   the fixed locus of $g$. Globally, this suggests us to consider the inertia orbifold of an orbifold.

\begin{center}
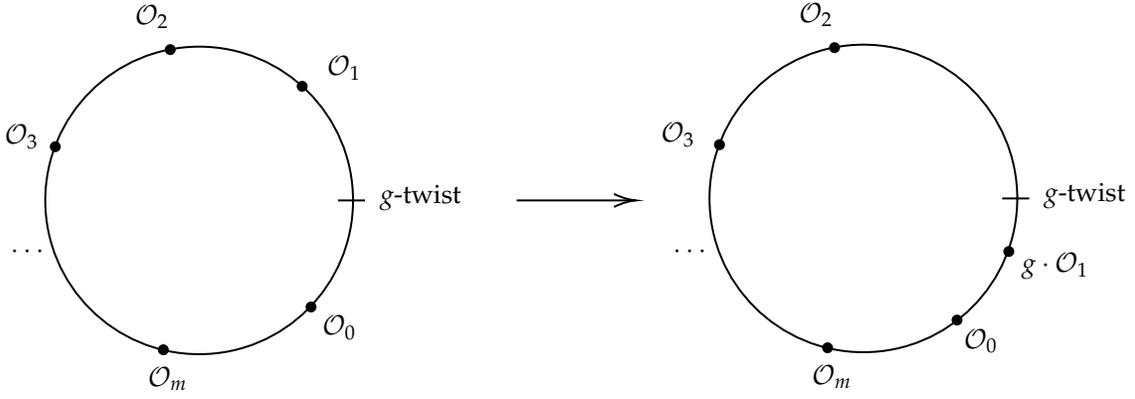
\begin{figure}
\tikzset{every picture/.style={line width=0.75pt}} 

\begin{tikzpicture}[x=0.75pt,y=0.75pt,yscale=-1,xscale=1]

\draw   (71,144.62) .. controls (71,101.75) and (105.75,67) .. (148.62,67) .. controls (191.48,67) and (226.23,101.75) .. (226.23,144.62) .. controls (226.23,187.48) and (191.48,222.23) .. (148.62,222.23) .. controls (105.75,222.23) and (71,187.48) .. (71,144.62) -- cycle ;
\draw  [fill={rgb, 255:red, 0; green, 0; blue, 0 }  ,fill opacity=1 ] (198.3,87) .. controls (198.3,85.76) and (199.31,84.75) .. (200.55,84.75) .. controls (201.79,84.75) and (202.8,85.76) .. (202.8,87) .. controls (202.8,88.24) and (201.79,89.25) .. (200.55,89.25) .. controls (199.31,89.25) and (198.3,88.24) .. (198.3,87) -- cycle ;
\draw  [fill={rgb, 255:red, 0; green, 0; blue, 0 }  ,fill opacity=1 ] (131.8,68.5) .. controls (131.8,67.26) and (132.81,66.25) .. (134.05,66.25) .. controls (135.29,66.25) and (136.3,67.26) .. (136.3,68.5) .. controls (136.3,69.74) and (135.29,70.75) .. (134.05,70.75) .. controls (132.81,70.75) and (131.8,69.74) .. (131.8,68.5) -- cycle ;
\draw  [fill={rgb, 255:red, 0; green, 0; blue, 0 }  ,fill opacity=1 ] (73.8,117.5) .. controls (73.8,116.26) and (74.81,115.25) .. (76.05,115.25) .. controls (77.29,115.25) and (78.3,116.26) .. (78.3,117.5) .. controls (78.3,118.74) and (77.29,119.75) .. (76.05,119.75) .. controls (74.81,119.75) and (73.8,118.74) .. (73.8,117.5) -- cycle ;
\draw  [fill={rgb, 255:red, 0; green, 0; blue, 0 }  ,fill opacity=1 ] (128.3,220) .. controls (128.3,218.76) and (129.31,217.75) .. (130.55,217.75) .. controls (131.79,217.75) and (132.8,218.76) .. (132.8,220) .. controls (132.8,221.24) and (131.79,222.25) .. (130.55,222.25) .. controls (129.31,222.25) and (128.3,221.24) .. (128.3,220) -- cycle ;
\draw  [fill={rgb, 255:red, 0; green, 0; blue, 0 }  ,fill opacity=1 ] (202.8,198.4) .. controls (202.8,197.16) and (203.81,196.15) .. (205.05,196.15) .. controls (206.29,196.15) and (207.3,197.16) .. (207.3,198.4) .. controls (207.3,199.64) and (206.29,200.65) .. (205.05,200.65) .. controls (203.81,200.65) and (202.8,199.64) .. (202.8,198.4) -- cycle ;
\draw    (218.62,144.62) -- (232.25,144.62) ;
\draw   (406,143.62) .. controls (406,100.75) and (440.75,66) .. (483.62,66) .. controls (526.48,66) and (561.23,100.75) .. (561.23,143.62) .. controls (561.23,186.48) and (526.48,221.23) .. (483.62,221.23) .. controls (440.75,221.23) and (406,186.48) .. (406,143.62) -- cycle ;
\draw  [fill={rgb, 255:red, 0; green, 0; blue, 0 }  ,fill opacity=1 ] (554.7,170.4) .. controls (554.7,169.16) and (555.71,168.15) .. (556.95,168.15) .. controls (558.19,168.15) and (559.2,169.16) .. (559.2,170.4) .. controls (559.2,171.64) and (558.19,172.65) .. (556.95,172.65) .. controls (555.71,172.65) and (554.7,171.64) .. (554.7,170.4) -- cycle ;
\draw  [fill={rgb, 255:red, 0; green, 0; blue, 0 }  ,fill opacity=1 ] (466.8,67.5) .. controls (466.8,66.26) and (467.81,65.25) .. (469.05,65.25) .. controls (470.29,65.25) and (471.3,66.26) .. (471.3,67.5) .. controls (471.3,68.74) and (470.29,69.75) .. (469.05,69.75) .. controls (467.81,69.75) and (466.8,68.74) .. (466.8,67.5) -- cycle ;
\draw  [fill={rgb, 255:red, 0; green, 0; blue, 0 }  ,fill opacity=1 ] (408.8,116.5) .. controls (408.8,115.26) and (409.81,114.25) .. (411.05,114.25) .. controls (412.29,114.25) and (413.3,115.26) .. (413.3,116.5) .. controls (413.3,117.74) and (412.29,118.75) .. (411.05,118.75) .. controls (409.81,118.75) and (408.8,117.74) .. (408.8,116.5) -- cycle ;
\draw  [fill={rgb, 255:red, 0; green, 0; blue, 0 }  ,fill opacity=1 ] (463.3,219) .. controls (463.3,217.76) and (464.31,216.75) .. (465.55,216.75) .. controls (466.79,216.75) and (467.8,217.76) .. (467.8,219) .. controls (467.8,220.24) and (466.79,221.25) .. (465.55,221.25) .. controls (464.31,221.25) and (463.3,220.24) .. (463.3,219) -- cycle ;
\draw  [fill={rgb, 255:red, 0; green, 0; blue, 0 }  ,fill opacity=1 ] (528.6,205) .. controls (528.6,203.76) and (529.61,202.75) .. (530.85,202.75) .. controls (532.09,202.75) and (533.1,203.76) .. (533.1,205) .. controls (533.1,206.24) and (532.09,207.25) .. (530.85,207.25) .. controls (529.61,207.25) and (528.6,206.24) .. (528.6,205) -- cycle ;
\draw    (553.62,143.62) -- (567.25,143.62) ;
\draw    (308.75,144.62) -- (366.75,144.62) ;
\draw [shift={(368.75,144.62)}, rotate = 180] [color={rgb, 255:red, 0; green, 0; blue, 0 }  ][line width=0.75]    (10.93,-3.29) .. controls (6.95,-1.4) and (3.31,-0.3) .. (0,0) .. controls (3.31,0.3) and (6.95,1.4) .. (10.93,3.29)   ;

\draw (209.3,201.4) node [anchor=north west][inner sep=0.75pt]   [align=left] {$\mathcal O_0$};
\draw (211.67,70) node [anchor=north west][inner sep=0.75pt]   [align=left] {$\mathcal O_1$};
\draw (114.67,44.5) node [anchor=north west][inner sep=0.75pt]   [align=left] {$\mathcal O_2$};
\draw (238.17,135.5) node [anchor=north west][inner sep=0.75pt]   [align=left] {$g$-twist};
\draw (121.19,227.12) node [anchor=north west][inner sep=0.75pt]   [align=left] {$\mathcal O_m$};
\draw (532.85,208) node [anchor=north west][inner sep=0.75pt]   [align=left] {$\mathcal O_0$};
\draw (449.67,43.5) node [anchor=north west][inner sep=0.75pt]   [align=left] {$\mathcal O_2$};
\draw (562,169.4) node [anchor=north west][inner sep=0.75pt]   [align=left] {$g\cdot \mathcal O_1$};
\draw (573.17,134.5) node [anchor=north west][inner sep=0.75pt]   [align=left] {$g$-twist};
\draw (456.19,226.12) node [anchor=north west][inner sep=0.75pt]   [align=left] {$\mathcal O_m$};
\draw (49,105) node [anchor=north west][inner sep=0.75pt]   [align=left] {$\mathcal O_3$};
\draw (380,104) node [anchor=north west][inner sep=0.75pt]   [align=left] {$\mathcal O_3$};
\draw (52,166) node [anchor=north west][inner sep=0.75pt]   [align=left] {$\cdots$};
\draw (386,166) node [anchor=north west][inner sep=0.75pt]   [align=left] {$\cdots$};

\end{tikzpicture}
\caption{An observable $\mathcal  O_1$ passing through the $g$-twist}\label{twist}
\end{figure}
\end{center}

The algebra of local observables is the Weyl algebra $\mathcal W_{2n}$ with the Moyal-Weyl product. It carries a natural $G$-action. In the $g$-twisted theory, there would be a point defect describing the $g$-action when observables pass through it. See figure \ref{twist}.

Then a slight modification of \cite{Localized} produces a correlation map in the $g$-twisted theory (see Definition \ref{defn-correlation})
$$
\langle-\rangle_{free}^g:
C_{-\bullet}(\gl_r(\mathcal{W}_{2n}),g)^{C(g)}
\rightarrow
 \bracket{\hat{\Omega }^{-\bullet }_{2k}}^{C(g)}.
$$
Here $C_{-\bullet}(\gl_r(\mathcal{W}_{2n}),g)$ are the $g$-twisted Hochschild chains (see Definition \ref{defn-g-Hochschild}).  $C(g)$ is the centralizer of $g$. Again, $\langle-\rangle_{free}^g$  intertwines the $g$-twisted Hochschild differential $b_g$ with the BV operator $\hbar \Delta_g$, and  intertwines the $g$-twisted Connes operator $B_g$ with the de Rham differential $d$ (see Lemma \ref{lem-g-intertwine}).  Thus $\langle-\rangle_{free}^g$ can be viewed as an explicit formula for the $g$-twisted version of quantum HKR map.

\subsection{The HKR map: classical v.s. quantum, manifolds v.s. orbifolds}
In this section, we review the correlation map in \cite{Localized} and compare it with the orbifold case. 

\begin{itemize}
\item[(1)]
Consider formal power series and formal differential forms in $n$ variables (we put $\hbar$ coefficients for convenience)
$$
A_{2n}=\mathbb{C}[[y^i]]\LS{\hbar}, \quad \hat{\Omega}^{-\*}_{2n}=\mathbb{C}[[y^i,dy^i]]\LS{\hbar}
$$
which can be viewed as the algebra of formal functions and differential forms on the infinitesimal neighborhood of the origin in $\R^{2n}$. Here the 1-forms $dy^i$ have cohomology degree $-1$ and anticommute with each other. 
The HKR theorem gives a quasi-isomorphism from the Hochschild chain complex of $A_{2n}$ to $\hat{\Omega}^{-\*}_{2n}$
$$
C_{-\bullet}(A_{2n}) \to \hat{\Omega}^{-\*}_{2n}
$$
which intertwines the Hochschild differential $b$ with $0$. 

\item[(2)]
Let  $(\R^{2n},\omega)$ be a symplectic vector space with linear coordinates $y^i$. The Weyl algebra 
$$
\mathcal{W}_{2n}:=\mathbb{C}[[y^i]]\LS{\hbar}
$$
can be seen as a deformation quantized algebra of $A_{2n}$, equipped with the Moyal-Weyl product 
$$
f\star g=m(e^{\hbar\Pi}(f\otimes g)), \quad \Pi=\omega^{-1}.
$$
Here the Poisson bi-vector $\Pi=\sum\limits_{i,j}\omega^{ij}\pa_i\wedge \pa_j$ acts on $f\otimes g$ as $\sum\limits_{i,j}\omega^{ij}\pa_i f \otimes \pa_jg$, and
$$
m: A_{2n}\otimes A_{2n}\to A_{2n}
$$
is the multiplication map.  Correlations in this model lead to a quasi-isomorphism \cite{Localized} 
$$
\langle-\rangle_{free}: C_{-\bullet}(\mathcal{W}_{2n}) \to \hat{\Omega}^{-\*}_{2n}
$$
which intertwines the Hochschild differential $b$ with the BV operator $\hbar\Delta$, and intertwines the Connes operator $B$ with the de Rham differential $d$.

\item[(3)]
We can couple the system with a rank $r$ vector bundle. This means observables are matrix valued and we will take a trace at the end. We get a quasi-isomorphism \cite{Localized} 
$$
\langle-\rangle_{free}: C_{-\bullet}(\gl_r(\mathcal{W}_{2n})) \to \hat{\Omega}^{-\*}_{2n}.
$$

\item[(4)]
In this paper we consider the orbifold case. The algebra of formal functions on the singular space $\R^{2n}/G$ has a smooth resolution by the semi-product $A_{2n}\rtimes G$. The Hochschild chain complex of such a twisted group algebra naturally splits:
$$
C_{-\bullet} (A\rtimes G) 
\simeq \LR{\bigoplus_{g\in G} C_{-\bullet} (A,g)}^G
\simeq \bigoplus_{[g]\in \Conj(G)} C_{-\bullet} (A,g) ^{C(g)}.
$$
Similar things hold for  cyclic/periodic cyclic chains. Here a term $C_{-\bullet} (A,g) ^{C(g)}$ describes observables on the fixed point set of 
$g$, and for different $g$'s representing a conjugate class $[g]$, there are natural isomorphisms linking these chains by conjugating. 
Let us choose linear coordinates 
$y^i,z^j$, such that $y^i$ are coordinates on the $g$-fixed locus
. So we can write 
$$
A_{2n}=\mathbb{C}[[y^i,z^j]]\LS{\hbar},
\quad \hat{\Omega}^{-\*}_{{2n}}=\C\PS{y^i,z^j,dy^i,dz^j}\LS{\hbar} 
$$
and we denote the formal de Rham algebra on $g$-fixed locus by
$$
\hat{\Omega}^{-\*}_{2k}:=\C\PS{y^i,dy^i}\LS{\hbar}.
$$
Then the HKR map gives a quasi-isomorphism
$$
C_{-\bullet}(A_{2n},g) \to \hat{\Omega}^{-\*}_{2k}.
$$
The $g$-twisted correlation map gives a quasi-isomorphism (see Definition \ref{defn-correlation})
$$
\langle-\rangle_{free}^g:
C_{-\bullet}(\gl_r(\mathcal{W}_{2n}),g)^{C(g)}
\rightarrow
 \bracket{\hat{\Omega }^{-\bullet }_{2k}}^{C(g)}.
$$

\end{itemize}

\subsection{Relation with orbifold algebraic index}
By incorporating $\langle-\rangle_{free}^g$ with the universal flat connection $\widehat\Theta$  in \cite{Localized} and by composing with a Berezin integral, we obtain the universal  trace map  (see Definition \ref{defn-universal-trace})
$$
\widehat{\Tr}_g := \int_{BV}\abracket{-}_{int}^g
\in
C_{\mathrm{Lie}}^\bullet \bracket{\mathfrak{g};
\Hom_{\mathbb{C}(\!(\hbar)\!)}\bracket{
CC_{-\bullet}^{per}(\gl_r(\mathcal{W}_{2n}),g)^{C(g)}
,
\mathbb{K}
}}, \quad \mathbb{K}=\C\LS{\hbar}[u,u^{-1}]$$
which lies in Lie algebra cochains. The universal index is
$$
\uTr_g(1)
$$
which can be computed semi-classically in analogy with the $S^1$-equivariant method (see Proposition \ref{ONELOOP}). 

By the descend construction, this leads to the algebraic index theorem for orbifolds \cites{Pflaum2007An,fst}
$$ \Tr(1)=\int_{\wedge X}e^{-\omega_{\hbar}/\hbar}\frac{\hat{A}(\wedge X)\Ch_g(E)}{m\det (1-{g_\perp}^{-1}e^{-R^\perp})}.$$
This gives a quantum field theoretical interpretation of the orbifold algebraic index.

\noindent \textbf{Acknowledgment}. The authors would like to thank Xiang Tang and Tianqing Zhu for helpful communications.  This work of S.~L. is supported by the National Key R\&D Program of China  (NO. 2020YFA0713000). 

\noindent \textbf{Conventions}. 
Given two elements $A,B$ in a graded algebra, the commutator $[A,B]$  always means a graded commutator (here $|\cdot|$ is the degree):  
$$
[A,B]=A\cdot B-(-1)^{|A|\cdot|B|}B\cdot A.
$$

\section{Topological quantum mechanics on orbifolds}\label{section:1}

\subsection{
Twisted sectors}

We are interested in $\sigma$-model on orbifolds. In quantum field theory, a $\sigma$-model describes dynamics on the mapping space  
$$
\phi: \Sigma \to X
$$ 
from a source geometry $\Sigma$ to a target geometry $X$. 

Let us consider the case when $X$ is an orbifold which is modeled by a global quotient 
$$
X= M/G.
$$  
Here $M$ is a smooth manifold with an action by a finite group $G$.  A map $\Sigma\to X$ to an orbifold is described by a principal $G$-bundle $P$ on $\Sigma$ together with a $G$-equivariant map $P\to X$
$$
\xymatrix{
P \ar[r]\ar[d]& X\\
\Sigma &
}
$$
 Then a $\sigma$-model $\Sigma \to X$ is equivalently a $G$-gauged $\sigma$-model with  field space as maps $P\to X$. 

Principal $G$-bundles on $\Sigma$ are generally not trivial. Then the theory splits into $\sigma$-models with $P$ in an isomorphism class of principal $G$-bundles on $\Sigma$ and a possibly smaller gauge group $H\subset G$. These disconnected components  are called twisted sectors.

\begin{eg} We will mainly consider the quantum mechanical case when $\Sigma$ is one-dimensional. 
\begin{itemize}
\item Let $\Sigma=I$ be an interval. A $G$-principal bundle on $I$ is trivial. Thus, a path $I\to M/G$ in $M/G$ is equivalently a path $I\to M$ in $M$, up to a choice of lift of base point. See figure \ref{fig:circle}.
\item Let $\Sigma=S^1$ be a circle. As we see in the introduction, the theory splits into $|\Conj(G)|$ theories. Following \cite{Localized}, we will be also interested in the low-energy limit when the size of a circle in an orbifold $S^1\to X$  tends to $0$. When $S^1$  shrinks to a point, there  only remains a point in $x\in M$ together with an automorphism $g$ which fixes $x$, or in other words, a point $x$ in the   $g$-fixed locus $M^g$. Thus   the mapping space becomes the inertia orbifold $\wedge X$ of $X$ in the low-energy limit.
\end{itemize}
\end{eg}

\begin{center}
\begin{figure}
    \centering
\tikzset{every picture/.style={line width=0.75pt}} 

\begin{tikzpicture}[x=0.75pt,y=0.75pt,yscale=-1,xscale=1]

\draw   (112.38,189.85) .. controls (105.38,178.85) and (185.38,89.85) .. (197.38,89.85) .. controls (209.38,89.85) and (289.38,177.85) .. (283.38,188.85) .. controls (277.38,199.85) and (238.38,225.85) .. (195.38,225.85) .. controls (152.38,225.85) and (119.38,200.85) .. (112.38,189.85) -- cycle ;
\draw    (166.38,167.85) .. controls (179.38,144.85) and (225.45,149.72) .. (226.38,166.85) .. controls (227.32,183.98) and (179.38,191.85) .. (166.38,167.85) -- cycle ;
\draw  [color={rgb, 255:red, 0; green, 0; blue, 0 }  ,draw opacity=1 ][fill={rgb, 255:red, 0; green, 0; blue, 0 }  ,fill opacity=1 ] (164.38,167.85) .. controls (164.38,166.75) and (165.28,165.85) .. (166.38,165.85) .. controls (167.49,165.85) and (168.38,166.75) .. (168.38,167.85) .. controls (168.38,168.95) and (167.49,169.85) .. (166.38,169.85) .. controls (165.28,169.85) and (164.38,168.95) .. (164.38,167.85) -- cycle ;
\draw   (420,161.43) .. controls (420,120.24) and (474.93,86.85) .. (542.69,86.85) .. controls (610.45,86.85) and (665.38,120.24) .. (665.38,161.43) .. controls (665.38,202.61) and (610.45,236) .. (542.69,236) .. controls (474.93,236) and (420,202.61) .. (420,161.43) -- cycle ;
\draw    (490.38,162.85) .. controls (513.38,140.85) and (520.38,137.85) .. (535.38,142.85) .. controls (550.38,147.85) and (572.38,177.85) .. (589.38,158.85) ;
\draw  [color={rgb, 255:red, 0; green, 0; blue, 0 }  ,draw opacity=1 ][fill={rgb, 255:red, 0; green, 0; blue, 0 }  ,fill opacity=1 ] (488.38,162.85) .. controls (488.38,161.75) and (489.28,160.85) .. (490.38,160.85) .. controls (491.49,160.85) and (492.38,161.75) .. (492.38,162.85) .. controls (492.38,163.95) and (491.49,164.85) .. (490.38,164.85) .. controls (489.28,164.85) and (488.38,163.95) .. (488.38,162.85) -- cycle ;
\draw  [color={rgb, 255:red, 0; green, 0; blue, 0 }  ,draw opacity=1 ][fill={rgb, 255:red, 0; green, 0; blue, 0 }  ,fill opacity=1 ] (587.38,158.85) .. controls (587.38,157.75) and (588.28,156.85) .. (589.38,156.85) .. controls (590.49,156.85) and (591.38,157.75) .. (591.38,158.85) .. controls (591.38,159.95) and (590.49,160.85) .. (589.38,160.85) .. controls (588.28,160.85) and (587.38,159.95) .. (587.38,158.85) -- cycle ;

\draw (160,181) node [anchor=north west][inner sep=0.75pt]   [align=left] {$x$};
\draw (482.2,175.4) node [anchor=north west][inner sep=0.75pt]   [align=left] {$\widetilde x$};
\draw (587.2,170.8) node [anchor=north west][inner sep=0.75pt]   [align=left] {$g \widetilde x$};
\draw (155,258) node [anchor=north west][inner sep=0.75pt]   [align=left] {a circle in $M/G$};
\draw (470,258) node [anchor=north west][inner sep=0.75pt]   [align=left] {a lift of the circle to $M$};

\end{tikzpicture}
    \caption{A $g$-twisted sector}
    \label{fig:circle}
\end{figure}
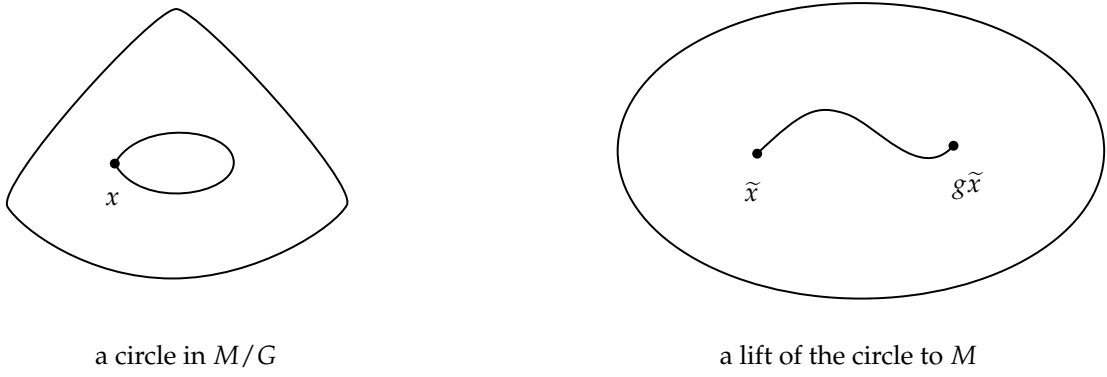
\end{center}

\subsection{Topological quantum mechanics on orbifolds} \label{TQM}
Following our notations in the introduction, we will study the topological quantum mechanics on orbifolds via the AKSZ approach, which can be viewed as a $\sigma$-model describing maps
$$
S^1_{dR}\to V/G.
$$
Here $S^1_{dR}$ is the locally-ringed space whose underlying topology is the circle $S^1$, with the structure sheaf being the de Rham complex of forms on $S^1$. The target is a $2n$-dimensional symplectic vector space $(V,\omega)$ with an action of a finite group $G\to \End(V)$ compatible with $\omega$.

The space of fields   in the $g$-twisted sector is
$$
\mathcal E_g=\Omega^\bullet(S^1, \mathcal L_g).
$$ 
 The action functional in the $g$-twisted sector is the free one as described in the introduction
$$
S[\varphi]=\frac12(\varphi, d\varphi)=\int_{S^1} \omega(\varphi,d \varphi), \qquad \varphi \in \mathcal E_g.
$$

Before we move on to discuss the quantum theory, let us fix our notations that will be used throughout this paper. Denote the order of $g$ by $\orderofg$. 
Let $2k$ be the dimension of the subspace of $g$-invariant elements in $V$. By choosing suitable Darboux coordinates, we can identify the triple  $(V,\omega,g)$ with  $(\R^{2n},\omega,g)$, where
\begin{itemize}
\item $\R^{2n}=\R^{2k}\oplus \R^{2n-2k}$, with coordinates $y^1,\cdots,y^{2k}$ on $\R^{2k}$ and $z^{2k+1},\cdots,z^{2n}$ on $\R^{2n-2k}$
\item $\omega=\omega|_{\R^{2k}\otimes\R^{2k}}+\omega|_{\R^{2n-2k}\otimes \R^{2n-2k}}$ 
\item 
$g|_{\R^{2k}}=1$, $g(\R^{2n-2k})\subseteq \R^{2n-2k}$. Denote $g_\perp:=g|_{\R^{2n-2k}}$. 
\end{itemize}
In fact, we can split $(V,\omega,g)=\oplus_{i=1}^n (V_i,\omega_i,g_i)$ with $\text{dim}V_i=2$, by making a complex coordinate change. 

The inverse of $\omega$ is a bivector 
\begin{align*}
\Pi&=
\frac{1}{2}\sum_{i=1}^{k}\bracket{\frac{\partial}{\partial y^i}\otimes\frac{\partial}{\partial y^{i+k}}-\frac{\partial}{\partial y^{i+k}}\otimes \frac{\partial}{\partial y^i}}
+\frac{1}{2}\sum_{j=2k+1}^{n+k}\bracket{\frac{\partial}{\partial z^j}\otimes\frac{\partial}{\partial z^{j+n-k}}-\frac{\partial}{\partial z^{j+n-k}}\otimes \frac{\partial}{\partial z^j}}\\
&=:\Pi_1+\Pi_2.
\end{align*}

\subsection{Twisted propagator}
In this section we calculate the  propagator in the $g$-twisted sector,  which represents the integral kernel of ``$d^{-1}$'' on $\mathcal E_g=\Omega^\bullet(S^1, \mathcal L_g)$ as indicated from the free action. The topological quantum mechanical model has a local gauge symmetry 
$$
\varphi \to \varphi + d\chi
$$
and we need to impose gauge fixing condition in order to invert $d$ appropriately. This can be done by using the standard flat metric on $S^1$, and the gauge fixed propagator becomes the integral kernel of the operator 
$$
d^*{\frac1  D}
$$  
where $d^*$ is the adjoint of $d$ and $D$ is the Laplacian. This propagator will be used to construct the Feynman diagram integrals in the quantum theory. 

Let $x$ denote the linear coordinate on $S^1$ where $x\sim x+1$ are identified. From the  Laplacian $D=-\frac{d^2}{dx^2}$, we can form the heat operator 
$$e^{-tD}:\,\Omega^\bullet(S^1, \mathcal L_g)\to \Omega^\bullet(S^1, \mathcal L_g).$$

The heat kernel $\mathbb{K}_t\in \Omega^\bullet(S^1\times S^1, \mathcal L_g\boxtimes \mathcal L_g)$ is defined by the integral relation
$$(e^{-tD}\alpha)(x)=\int_{y\in S^1}\left< \mathbb{K}_t(x,y),\alpha(y) \right>_y.$$
Here $\left< - ,-  \right>_y$ means we pair the second factor of $\mathbb{K}_t$ with $\alpha$.
Explicitly, the heat kernel is  given by
$$\mathbb{K}_t(x,y)=\frac{1}{\sqrt{4\pi t}}\sum_{m\in \mathbb{Z}}\,\mathrm{exp}\bracket{-\frac{(x-y+m)^2}{4t}} (dx\otimes 1-1\otimes dy)\cdot (1\otimes g^{-m})\Pi.$$
Here $\Pi$ is the inverse of $\omega$, i.e., the Poisson bi-vector.  

Let $d^*$ be the adjoint of $d$. The propagator $\mathbb{P}_0^\infty $ representing the integral kernel of $d^*{\frac 1 D}$ is
$$
\begin{aligned}
\mathbb{P}_0^\infty (x,y)
&=\displaystyle \lim_{\substack{\epsilon  \to 0  \\   L\to \infty}} \int_{\epsilon }^{L} (d^*\otimes 1)\mathbb{K}_t (x,y ) dt \\
&=\displaystyle \lim_{\substack{\epsilon  \to 0 \\   L\to \infty}} \int_{\epsilon }^{L} dt \frac{1}{\sqrt{4\pi t}}\sum_{m\in \mathbb{Z}}\bracket{-\frac{x-y+m}{2t}}\,\mathrm{exp}\bracket{-\frac{(x-y+m)^2}{4t}} \cdot (1\otimes g^{-m})\Pi \\
&=-\frac{1}{4\sqrt{\pi }} \sum_{k=0}^{\orderofg -1}\displaystyle \lim_{\substack{\epsilon  \to 0 \\   L\to \infty}} \int_{\epsilon }^{L} dt\, t^{-3/2} \sum_{m\in \orderofg \mathbb{Z}+k}(x-y+m)\,\mathrm{exp}\bracket{-\frac{(x-y+m)^2}{4t}} \cdot (1\otimes g^{-k})\Pi \\
&=-\frac{1}{4\sqrt{\pi }} \sum_{k=0}^{\orderofg -1}\displaystyle \lim_{\substack{\epsilon  \to 0 \\   L\to \infty}} \int_{\epsilon }^{L} dt\, t^{-3/2}  \sum_{m\in \orderofg \mathbb{Z}+k}\\
&\qquad \qquad \qquad \underset{\mathrm{absolutely\,\, convergent}}{\underbrace{\left ( (x-y+m)\,\mathrm{exp}\bracket{-\frac{(x-y+m)^2}{4t}}-\frac{1}{\orderofg }\int_{x-y+m-\orderofg /2}^{x-y+m+\orderofg /2}u\,e^{-u^2/4t}du \right )}} \cdot (1\otimes g^{-k})\Pi \\
&=-\frac{1}{4\sqrt{\pi }} \sum_{k=0}^{\orderofg -1} \sum_{m\in \orderofg \mathbb{Z}+k}\displaystyle \lim_{\substack{\epsilon  \to 0 \\   L\to \infty}} \int_{\epsilon }^{L} dt\, t^{-3/2} \\
&\qquad \qquad \qquad \left ( (x-y+m)\,\mathrm{exp}\bracket{-\frac{(x-y+m)^2}{4t}}-\frac{1}{\orderofg }\int_{x-y+m-\orderofg /2}^{x-y+m+\orderofg /2}u\,e^{-u^2/4t}du \right ) \cdot (1\otimes g^{-k})\Pi .
\end{aligned}
$$

When $0<x-y<1$, 
$$\begin{aligned}
P_1:=\mathbb{P}_0^\infty (x,y)
&=-\frac{1}{4\sqrt{\pi }} \sum_{k=0}^{\orderofg -1} \sum_{m\in \orderofg \mathbb{Z}+k}
\left ( (x-y+m)\frac{2\sqrt{\pi }}{|x-y+m|}-\frac{1}{\orderofg }\int_{x-y+m-\orderofg /2}^{x-y+m+\orderofg /2}u\frac{2\sqrt{\pi }}{|u|}du \right ) \cdot (1\otimes g^{-k})\Pi \\
&=-\frac{1}{2} \sum_{k=0}^{\orderofg -1} \sum_{m=k,k-\orderofg }
\left ( (x-y+m)\frac{1}{|x-y+m|}-\frac{1}{\orderofg }\int_{x-y+m-\orderofg /2}^{x-y+m+\orderofg /2}u\frac{1}{|u|}du \right ) \cdot (1\otimes g^{-k})\Pi  \\
&=\sum_{k=0}^{\orderofg -1}\left ( \frac{x-y}{\orderofg }-\frac{1}{2}+\frac{k}{\orderofg } \right ) \cdot (1\otimes g^{-k})\Pi \\
&=\left (x-y-\frac{1}{2}\right )   \bracket{1\otimes \frac{1}{\orderofg }\sum_{k=0}^{\orderofg -1}g^{-k}}\Pi +\frac{1}{\orderofg } \sum_{k=0}^{\orderofg -1}\bracket{k-\frac{\orderofg -1}{2}} \cdot (1\otimes g^{-k})\Pi \\
&=\left (x-y-\frac{1}{2}\right )   \Pi_1 - \bracket{1\otimes \frac {1}{1-{g_\perp}^{-1}}}\Pi_2 \\
&=:P_{11}+P_{12}
.
\end{aligned}$$
The propagator $P_1$ represents a line that connects $x$ to $y$ in Feynman diagrams. When pulled back to configuration space $S^1[2]$, 
its non-constant part $P_{11}$ becomes $\left (u-\frac{1}{2} \right )\Pi_1$, which is the propagator used in \cites{trace,GLL,Localized}.

When $x=y$, 
$$\begin{aligned}
P_2:=\mathbb{P}_0^\infty (x,y)|_{x=y}
&=-\frac{1}{2} \sum_{k=1}^{\orderofg -1} \sum_{m=k,k-\orderofg }
\left ( m\frac{1}{|m|}-\frac{1}{\orderofg }\int_{m-\orderofg /2}^{m+\orderofg /2}u\frac{1}{|u|}du \right ) \cdot (1\otimes g^{-k})\Pi  \\
&=\frac{1}{\orderofg } \sum_{k=1}^{\orderofg -1}\bracket{k-\frac{\orderofg }{2}} \cdot (1\otimes g^{-k})\Pi \\
&=- \bracket{1\otimes\frac {{g_\perp}^{-1}}{1-{g_\perp}^{-1}}}\Pi_2 -\frac12 \Pi_2
.
\end{aligned}$$
The propagator $P_2$ represents a loop from $x$ to itself.

We also denote 
$$
P_3=- \bracket{1\otimes\frac {{g_\perp}^{-1}}{1-{g_\perp}^{-1}}}\Pi_2=P_{12} +\Pi_2 = P_2 +\frac{1}{2}\Pi_2.
$$

\begin{rem}
Choose a $g$-invariant complex structure on $\R^{2n-2k}$ 
and identify it with $\mathbb C ^{n-k}$.  
Then the matrix of $g$ is  unitary and the matrix of $\frac {1+{g_\perp}^{-1}}{1-{g_\perp}^{-1}}$ is anti-Hermitian. Then
$$\begin{aligned}
P_2 
&=  -\frac12  \left ( 1\otimes \frac {1+{g_\perp}^{-1}}{1-{g_\perp}^{-1}} \right ) \sum_{i} \left ( \frac{\partial }{\partial x^i} \otimes \frac{\partial }{\partial y^i}-\frac{\partial }{\partial y^i} \otimes \frac{\partial }{\partial x^i}\right ) 
\\
&= \sum_{i,j}\left ( \frac {1+{g_\perp}^{-1}}{1-{g_\perp}^{-1}} \right )^{ij} \left ( \frac{\partial }{\partial z ^i} \otimes  \frac{\partial }{\partial \overline{z}^j} + \frac{\partial }{\partial \overline{z} ^j} \otimes  \frac{\partial }{\partial z^i} \right ).
\end{aligned}$$
This is exactly the one used in \cite{fe:g-index} to construct a trace.
\end{rem}

\subsubsection{Configuration spaces}
The propagators are not smooth on $S^1\times S^1$, but can be lifted smoothly to compactified configuration spaces of $S^1$ (as shown for Chern-Simons type theory in \cites{Kontsevich-diagram,axelrod1994chern,GJ}). Correlation functions  in our model will be expressed via integrals on  these  configurations spaces.  

We first introduce the geometric notation, following the presentation in \cite{Localized}.   Let
$$
\Conf_{S^1}[m+1]=\{(p_0,p_1, \cdots, p_m)\in (S^1)^{m+1}\,|\, p_i\neq p_j\} \subset (S^1)^{m+1}
$$
be the configuration space of $m+1$ ordered points on the circle $S^1$. Let
$$
 \Cyc_{S^1}[m+1]=\{(p_0,p_1,\cdots,p_m)\in \Conf_{S^1}[m+1]\,|\, p_0, \cdots, p_m\ \text{are anti-clockwise cyclic ordered} \}
$$
be the connected component of $\Conf_{S^1}[m+1]$ where points have the prescribed cyclic order.

$\Cyc_{S^1}[m+1]$ has a natural compactification as follows. Let us identify $S^1=\R/\Z$ so that the total length of $S^1$ is 1. Given a cyclic ordered points $(p_0,\cdots, p_m)$ on $S^1$, let $u_{i,i+1}$ denote the oriented distance by traveling  from $p_i$ to $p_{i+1}$ anti-clockwise ($p_{m+1}\equiv p_0$). Let $\Delta_m$ denote the standard simplex
$$
\Delta_m=\{(\lambda_0,\cdots, \lambda_m)\in \R^{n+1}\,|\, \lambda_i\geq 0,\ \  \sum_{i=0}^m \lambda_i=1\}.
$$
Let $\Delta_{m}^o$ be the interior of $\Delta_m$. Then we have a natural identification
\begin{align*}
\Cyc_{S^1}[m+1] & \iso S^1\times \Delta_{m}^o\\
(p_0,\cdots,p_m) & \mapsto \{p_0\}\times (u_{0,1},u_{1,2},\cdots, u_{m,0}).
\end{align*}
This allows us to compactify $\Cyc_{S^1}[m+1]$ by  $S^1\times \Delta_{m}$, which will be denoted by $S^1_{cyc}[m+1]$.

Similarly, we can compactify the whole space $\Conf_{S^1}[m]$, denoted by ${S^1}[m]$. ${S^1}[m]$ is a manifold with corners. Alternately, it could be constructed via successive real-oriented blow ups of diagonals in $(S^1)^m$. 
In particular, it carries a natural blow-down map
$$\pi:S^1[m]\rightarrow (S^1)^m.
$$

For example,  $S^1[2]$ is parametrized as a cylinder
$$S^1[2]=\{(e^{2\pi i\theta},u)\,|\,0\leq\theta<1,0\leq u\leq 1\}.$$
With this parametrization, the blow down map is
$$\pi:S^1[2]\rightarrow (S^1)^2,  \quad (e^{2\pi i\theta},u)\mapsto (e^{2\pi i\theta},e^{2\pi i\theta+u}).$$
If we denote an element of $(S^1)^2$ by two ordered points $(p_0, p_1)$, then $u$ is the oriented distance by traveling from $p_0$ to $p_1$ anti-clockwise.

\subsection{Vacuum expectation}
In this section, we calculate the vacuum expectation which encodes the partition function of  free  quantum mechanics on an orbifold. We give a heuristic explanation of the factor $\det(1-{g_\perp}^{-1})^{-1}$ that will appear in the trace map and orbifold index later. This factor is precisely the vacuum expectation.

In the following discussions, we will identify
$$\Omega^\bullet(S^1, \mathcal L_g)\simeq \{\omega \in \Omega^\bullet(\R)\otimes V\,|\, \omega(x+1)=g\cdot\omega(x)\}$$
by pulling back $\mathcal L_g$ along the universal cover $\R\to S^1$.

We decompose the triple $(V,\omega,g)$  into direct sum of triples $(V_i,\omega_i,g_i)$ 
with $\dim V_i=2$.   The vacuum expectation will be the product of the vacuum expectation associated to each subspace of this decomposition.  
Let us assume $W=V_i$ for some $i$ and compute the vacuum expectation for $W$. 

Firstly, consider the case $g\neq 1$. Choose a   basis $\{u,v\}$ of $W$ such that $\omega(\partial_u,\partial_v)=1$. Then the matrix of $g$ on $W$ is of the form 
$$
g=\begin{pmatrix}
\cos\bracket{\frac{2\pi l}{\orderofg }} &    \sin\bracket{\frac{2\pi l}{\orderofg }}\\
-\sin\bracket{\frac{2\pi l}{\orderofg }} &     \cos\bracket{\frac{2\pi l}{\orderofg }}\\
\end{pmatrix}.
$$
We can write the $g$-action as $g(u+iv)=e^{\frac{2\pi il}{\orderofg }}(u+iv)$ by 
using a complex coordinate. 

Only $0$-forms contribute to the action. Such forms have an orthonormal basis 
\begin{align*}
&\quad&& \text{Re} \left( e^{\frac{2\pi il}{\orderofg } t}e^{2\pi imt}(u+iv)\right), && \text{Im}\left( e^{\frac{2\pi il}{\orderofg } t}e^{2\pi imt}(u+iv)\right), &&m\in \Z.&&\quad& 
\end{align*}
The vacuum expectation is computed via the standard Gaussian integral and is naively given by 
\begin{align*}
\frac{1}{\sqrt{\det(d)}}
=\prod_{m\in \Z} \bigg | 2\pi\bracket{m+\frac{l}{\orderofg }}\bigg |^{-1}
=\bbracket{\prod_{m\neq 0}  |2\pi m |\cdot \prod_{m\neq 0} \bracket{1+\frac{l}{m\orderofg }} \cdot \frac{2\pi l}{\orderofg }}^{-1}
=\prod_{m= 1}^{\infty} (2\pi m)^{-2}  \cdot\bbracket{ 2\sin\bracket{\frac{\pi l}{\orderofg }}}^{-1}.
\end{align*}

The factor $\displaystyle \prod\limits_{m= 1}^{\infty} (2\pi m)^{-2}$ is divergent and needs regularization. We use the standard zeta function regularization. Consider the function  
$$
\zeta_1(s)=\sum_{m=1}^\infty (2\pi m)^{-2s}
$$
which converges for $\text{Re}(s)>1$  and can be analytically continued to a neighborhood of $0$. Taking derivative at $0$, we get the formal expression $\zeta_1'(0)=\displaystyle \sum\limits_{m=1}^\infty \log((2\pi m)^{-2})$. So we can use analytic continuation to define the above divergent product by
$$
\prod\limits_{m= 1}^{\infty} (2\pi m)^{-2}:=\text{exp}\,( \zeta_1'(0)).
$$

The function $\zeta_1(s)$ is related to the Riemann zeta function $$
\zeta(s)=\sum\limits_{m=1}^\infty  m^{-s}$$ 
by $\zeta_1(s)=(2\pi)^{-2s}\zeta(2s)$. So 
$$
\zeta_1'(0)=-2\log(2\pi)\zeta(0)+2\zeta'(0)=-2\log(2\pi)\cdot(-\frac{1}{2})+2\cdot \bracket{-\frac{1}{2}\log(2\pi)}=0.
$$
Thus the factor $\displaystyle \prod\limits_{m= 1}^{\infty} (2\pi m)^{-2}$ is regularized by $\text{exp}\, (\zeta_1'(0))=1$.

The regularized vacuum expectation is
$$
\frac{1}{\sqrt{\det(d)}}
=\bbracket{2\sin\bracket{\frac{\pi l}{\orderofg }}}^{-1}
=\det\begin{pmatrix}
1-\cos\bracket{-\frac{2\pi l}{\orderofg }} &  -\sin\bracket{-\frac{2\pi l}{\orderofg }}\\
\sin\bracket{-\frac{2\pi l}{\orderofg }} & 1- \cos\bracket{-\frac{2\pi l}{\orderofg }}\\
\end{pmatrix}^{-1}
=\det(1-g^{-1})^{-1}.
$$

Secondly, if $g=1$, the   vacuum expectation is similarly regularized   
$$
\frac{1}{\sqrt{\det(d)}}=
\prod_{m= 1}^{\infty} (2\pi m)^{-2}:=1.
$$

We have calculated the vacuum expectation for a special $2$-dimensional subspace $W\subset V$. We see the vacuum expectation associated to   $V$ is 
$$
\det(1-{g_\perp}^{-1})^{-1}.
$$


\subsection{Correlation map}

\subsubsection{Twisted chains}
Let $A$ be a    $\C$-algebra with unit, 
and $G$ be a finite group acting on $A$. 
For   $g\in G$ and $a_0\otimes a_1\otimes\cdots\otimes a_m \in A^{\otimes m+1}$, define the $g$-twisted Hochschild differential 
\begin{align*}
b_g(a_0\otimes a_1\otimes\cdots\otimes a_m )=&
(-1)^ma_ma_0\otimes a_1\otimes \cdot\cdot\cdot\otimes a_{m-1}+a_0 g(a_1)\otimes a_2\otimes\cdot\cdot\cdot\otimes a_{m} \\&
+\sum_{i=1}^{m-1}(-1)^ia_0\otimes \cdot\cdot\cdot \otimes a_i a_{i+1}\otimes\cdot\cdot\cdot \otimes a_m
\end{align*}
and the $g$-twisted  Connes operator
\begin{align*}
B_g(a_0 \otimes a_1\otimes\cdot\cdot\cdot\otimes a_m)=&
1\otimes a_0\otimes a_1\otimes \cdot\cdot\cdot \otimes a_{m} \\&
+\sum_{i=1}^{m}(-1)^{mi} 1\otimes g(a_{m-i+1})\otimes\cdot\cdot\cdot \otimes g(a_m)\otimes a_0\otimes   \cdot\cdot\cdot \otimes a_{m-i}.
\end{align*}
Let  $$C_{-m}(A):=A\otimes (A/\C 1)^{\otimes m}$$
be the space of  cyclic $m$-chains. $b_g$ descends to a map $b_g: C_{-m}(A)\to C_{-m+1}(A)$ which satisfies $b_g^2=0$. 

\begin{defn}\label{defn-g-Hochschild}
The $g$-twisted Hochschild complex is
$$
C_{-\bullet}(A,g)=(C_{-\bullet}(A),b_g).
$$
\end{defn}

An element $h\in G$ acts on  $C_{-m}(A)$ diagonally by 
$$
h(a_0\otimes a_1\otimes\cdots\otimes a_m)= h(a_0)\otimes h(a_1)\otimes\cdots\otimes h(a_m).
$$
Denote $C_{-m}(A)^g$ to be the $g$-invariant subspace of $C_{-m}(A)$.   $B_g$ descends to a map $B_g: C_{-m}(A)^g\to C_{-m-1}(A)^g$ which satisfies $B_g^2=0$ and $B_gb_g+b_gB_g=0$.

\begin{defn}
The $g$-twisted periodic cyclic complex is
$$
CC_{-\bullet}^{per}(A,g)=(C_{-\bullet}(A)^g[u,u^{-1}], b_g+uB_g).
$$
\end{defn}

Let us consider matrix-valued observables, where the matrix coefficients come from  a vector bundle $E$  coupled to the system. In the plain case \cite{Localized}, taking trace provides a natural quasi-isomorphism $C_{-\bullet}(\gl_r(A))\to C_{-\bullet}(A)$.  In the orbifold case, we need a $g$-twisted trace for its matrix coefficients. Let  $M_0,\cdots,M_m\in \gl_r$ be matrices corresponding to $m+1$ ordered point on a circle with a $g$-twist between $M_0$ and $M_1$ (See picture \ref{trace}).  Then we define the $g$-twisted trace 
$$ 
\mathrm{tr}_g (M_0\otimes M_1 \otimes \cdots \otimes M_m) := \mathrm{tr}(M_0  g M_1\cdots  M_m).
$$
Here $g$ is the cyclic structure of $E$  represented by a matrix, so we can insert $g$ and take trace. 
$\mathrm{tr}_g$ satisfies
\begin{equation}\label{Trace}
\mathrm{tr}_g (M_0\otimes M_1 \otimes \cdots \otimes M_m) = \mathrm{tr}(gM_1g^{-1}\otimes M_2 \otimes \cdots \otimes M_m\otimes M_0). \tag{$\mathrm{tr}_g$}
\end{equation}

\begin{center}
\begin{figure}
\tikzset{every picture/.style={line width=0.75pt}} 

\begin{tikzpicture}[x=0.75pt,y=0.75pt,yscale=-1,xscale=1]

\draw   (71,144.62) .. controls (71,101.75) and (105.75,67) .. (148.62,67) .. controls (191.48,67) and (226.23,101.75) .. (226.23,144.62) .. controls (226.23,187.48) and (191.48,222.23) .. (148.62,222.23) .. controls (105.75,222.23) and (71,187.48) .. (71,144.62) -- cycle ;
\draw  [fill={rgb, 255:red, 0; green, 0; blue, 0 }  ,fill opacity=1 ] (198.3,87) .. controls (198.3,85.76) and (199.31,84.75) .. (200.55,84.75) .. controls (201.79,84.75) and (202.8,85.76) .. (202.8,87) .. controls (202.8,88.24) and (201.79,89.25) .. (200.55,89.25) .. controls (199.31,89.25) and (198.3,88.24) .. (198.3,87) -- cycle ;
\draw  [fill={rgb, 255:red, 0; green, 0; blue, 0 }  ,fill opacity=1 ] (131.8,68.5) .. controls (131.8,67.26) and (132.81,66.25) .. (134.05,66.25) .. controls (135.29,66.25) and (136.3,67.26) .. (136.3,68.5) .. controls (136.3,69.74) and (135.29,70.75) .. (134.05,70.75) .. controls (132.81,70.75) and (131.8,69.74) .. (131.8,68.5) -- cycle ;
\draw  [fill={rgb, 255:red, 0; green, 0; blue, 0 }  ,fill opacity=1 ] (73.8,117.5) .. controls (73.8,116.26) and (74.81,115.25) .. (76.05,115.25) .. controls (77.29,115.25) and (78.3,116.26) .. (78.3,117.5) .. controls (78.3,118.74) and (77.29,119.75) .. (76.05,119.75) .. controls (74.81,119.75) and (73.8,118.74) .. (73.8,117.5) -- cycle ;
\draw  [fill={rgb, 255:red, 0; green, 0; blue, 0 }  ,fill opacity=1 ] (128.3,220) .. controls (128.3,218.76) and (129.31,217.75) .. (130.55,217.75) .. controls (131.79,217.75) and (132.8,218.76) .. (132.8,220) .. controls (132.8,221.24) and (131.79,222.25) .. (130.55,222.25) .. controls (129.31,222.25) and (128.3,221.24) .. (128.3,220) -- cycle ;
\draw  [fill={rgb, 255:red, 0; green, 0; blue, 0 }  ,fill opacity=1 ] (202.8,198.4) .. controls (202.8,197.16) and (203.81,196.15) .. (205.05,196.15) .. controls (206.29,196.15) and (207.3,197.16) .. (207.3,198.4) .. controls (207.3,199.64) and (206.29,200.65) .. (205.05,200.65) .. controls (203.81,200.65) and (202.8,199.64) .. (202.8,198.4) -- cycle ;
\draw    (218.62,144.62) -- (232.25,144.62) ;
\draw   (406,143.62) .. controls (406,100.75) and (440.75,66) .. (483.62,66) .. controls (526.48,66) and (561.23,100.75) .. (561.23,143.62) .. controls (561.23,186.48) and (526.48,221.23) .. (483.62,221.23) .. controls (440.75,221.23) and (406,186.48) .. (406,143.62) -- cycle ;
\draw  [fill={rgb, 255:red, 0; green, 0; blue, 0 }  ,fill opacity=1 ] (554.7,170.4) .. controls (554.7,169.16) and (555.71,168.15) .. (556.95,168.15) .. controls (558.19,168.15) and (559.2,169.16) .. (559.2,170.4) .. controls (559.2,171.64) and (558.19,172.65) .. (556.95,172.65) .. controls (555.71,172.65) and (554.7,171.64) .. (554.7,170.4) -- cycle ;
\draw  [fill={rgb, 255:red, 0; green, 0; blue, 0 }  ,fill opacity=1 ] (466.8,67.5) .. controls (466.8,66.26) and (467.81,65.25) .. (469.05,65.25) .. controls (470.29,65.25) and (471.3,66.26) .. (471.3,67.5) .. controls (471.3,68.74) and (470.29,69.75) .. (469.05,69.75) .. controls (467.81,69.75) and (466.8,68.74) .. (466.8,67.5) -- cycle ;
\draw  [fill={rgb, 255:red, 0; green, 0; blue, 0 }  ,fill opacity=1 ] (408.8,116.5) .. controls (408.8,115.26) and (409.81,114.25) .. (411.05,114.25) .. controls (412.29,114.25) and (413.3,115.26) .. (413.3,116.5) .. controls (413.3,117.74) and (412.29,118.75) .. (411.05,118.75) .. controls (409.81,118.75) and (408.8,117.74) .. (408.8,116.5) -- cycle ;
\draw  [fill={rgb, 255:red, 0; green, 0; blue, 0 }  ,fill opacity=1 ] (463.3,219) .. controls (463.3,217.76) and (464.31,216.75) .. (465.55,216.75) .. controls (466.79,216.75) and (467.8,217.76) .. (467.8,219) .. controls (467.8,220.24) and (466.79,221.25) .. (465.55,221.25) .. controls (464.31,221.25) and (463.3,220.24) .. (463.3,219) -- cycle ;
\draw  [fill={rgb, 255:red, 0; green, 0; blue, 0 }  ,fill opacity=1 ] (528.6,205) .. controls (528.6,203.76) and (529.61,202.75) .. (530.85,202.75) .. controls (532.09,202.75) and (533.1,203.76) .. (533.1,205) .. controls (533.1,206.24) and (532.09,207.25) .. (530.85,207.25) .. controls (529.61,207.25) and (528.6,206.24) .. (528.6,205) -- cycle ;
\draw    (553.62,143.62) -- (567.25,143.62) ;
\draw    (308.75,144.62) -- (366.75,144.62) ;
\draw [shift={(368.75,144.62)}, rotate = 180] [color={rgb, 255:red, 0; green, 0; blue, 0 }  ][line width=0.75]    (10.93,-3.29) .. controls (6.95,-1.4) and (3.31,-0.3) .. (0,0) .. controls (3.31,0.3) and (6.95,1.4) .. (10.93,3.29)   ;

\draw (209.3,201.4) node [anchor=north west][inner sep=0.75pt]   [align=left] {$M_0$};
\draw (211.67,70) node [anchor=north west][inner sep=0.75pt]   [align=left] {$M_1$};
\draw (114.67,44.5) node [anchor=north west][inner sep=0.75pt]   [align=left] {$M_2$};
\draw (238.17,135.5) node [anchor=north west][inner sep=0.75pt]   [align=left] {$g$-twist};
\draw (121.19,227.12) node [anchor=north west][inner sep=0.75pt]   [align=left] {$M_m$};
\draw (532.85,208) node [anchor=north west][inner sep=0.75pt]   [align=left] {$M_0$};
\draw (449.67,43.5) node [anchor=north west][inner sep=0.75pt]   [align=left] {$M_2$};
\draw (562,169.4) node [anchor=north west][inner sep=0.75pt]   [align=left] {$g M_1 g^{-1}$};
\draw (573.17,134.5) node [anchor=north west][inner sep=0.75pt]   [align=left] {$g$-twist};
\draw (456.19,226.12) node [anchor=north west][inner sep=0.75pt]   [align=left] {$M_m$};
\draw (48,104) node [anchor=north west][inner sep=0.75pt]   [align=left] {$M_3$};
\draw (380,104) node [anchor=north west][inner sep=0.75pt]   [align=left] {$M_3$};
\draw (52,166) node [anchor=north west][inner sep=0.75pt]   [align=left] {$\cdots$};
\draw (386,166) node [anchor=north west][inner sep=0.75pt]   [align=left] {$\cdots$};

\end{tikzpicture}
\caption{A matrix $M_1$ passing through the $g$-twist}\label{trace}
\end{figure}
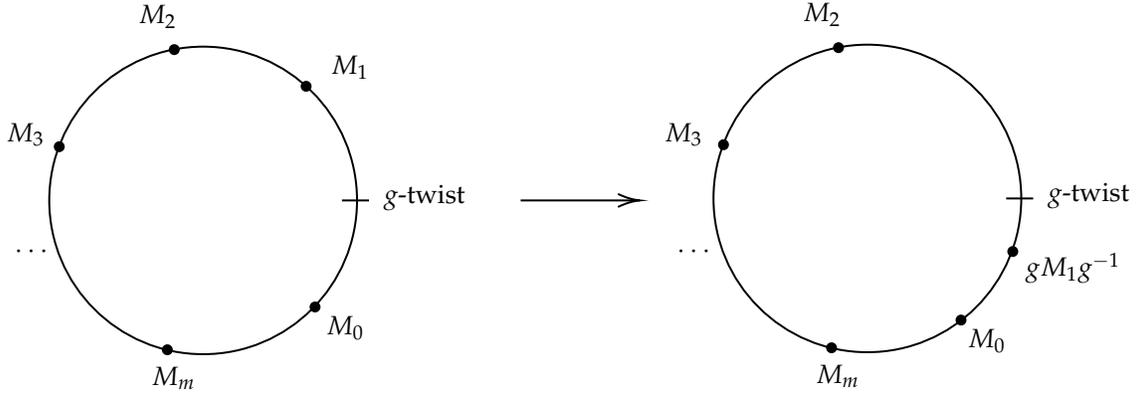
\end{center}

\subsubsection{Some operations on chains}

Define three operators (Einstein summation convention will be always assumed)
\begin{itemize}
    \item the de Rham differential at tangent direction of $\R^{2k}$
    $$\d:\hat{\Omega}^{-\*}_{{2n}}\rightarrow \hat{\Omega}^{-\*-1}_{{2n}},\quad \d =dy^i\partial_{y^i}$$
    \item  contraction with the bivector $\Pi_1$ (see the notation in Section \ref{TQM})
    $$\iota_{\Pi_1}:\hat{\Omega}^{-\*}_{{2n}}\rightarrow \hat{\Omega}^{-\*+2}_{{2n}}, \quad \iota_{\Pi_1}=\frac{1}{2}\omega^{ij}\iota_{\partial_{y^i}}\iota_{\partial_{y^j}}$$
    \item the BV operator 
    $$\Delta:=\mathcal{L}_{\Pi_1}=[\d,\iota_{\Pi_1}]:\hat{\Omega}^{-\*}_{{2n}}\rightarrow \hat{\Omega}^{-\*+1}_{{2n}}, \quad \Delta=\omega^{ij}\mathcal{L}_{\partial_{y^i}}\iota_{\partial_{y^j}}$$
\end{itemize}

These operators naturally extend to the 
 m-th tensor product of $\hat{\Omega}^{-\*}_{2n}$ 
  $$(\hat{\Omega}^{-\*}_{{2n}})^{\otimes m}:=\hat{\Omega}^{-\*}_{{2n}}\otimes_{\C\LS{\hbar}}\cdot\cdot\cdot\otimes_{\mathbb{C}\LS{\hbar}}\hat{\Omega}^{-\*}_{{2n}}$$
  by
   \begin{align*}\d   (a_1\otimes\cdot\cdot\cdot\otimes a_m)&:=\sum_{1\leq \alpha\leq m}\pm a_1\otimes\cdot\cdot\cdot\otimes \d   a_{\alpha}\otimes\cdot\cdot\cdot\otimes a_m\\
    \iota_{\Pi_1}(a_1\otimes\cdot\cdot\cdot\otimes a_m)&:=\frac{1}{2}\sum_{1\leq\alpha,\beta\leq m}\pm \omega^{ij}a_1\otimes\cdot\cdot\cdot\otimes\iota_{\partial_{y^i}}a_{\alpha}\otimes\cdot\cdot\cdot \otimes \iota_{\partial_{y^j}}a_{\beta}\otimes\cdot\cdot\cdot\otimes a_m \\
    \Delta(a_1\otimes\cdot\cdot\cdot\otimes a_m)&:=\sum_{1\leq\alpha,\beta\leq m}\pm\omega^{ij}a_1\otimes\cdot\cdot\cdot\otimes\mathcal{L}_{\partial_{y^i}}a_{\alpha}\otimes\cdot\cdot\cdot\otimes\iota_{\partial_{y^j}}a_{\beta}\otimes\cdot\cdot\cdot\otimes a_m.
    \end{align*}
    Here $\pm$ are Koszul signs by passing odd operators through the graded objects.

We then define contraction with propagators. This will be used to describe Feynman rules. 
\begin{defn} Let $\Omega^\*_{S^1[m]}$ be smooth differential forms on $S^1[m]$.
Let 
$$
P=a^{ij} f\otimes  {\partial_{y^i}}\otimes {\partial_{y^j}} \in C^{\infty}({S^1[2]})\otimes \R^{2n}\otimes \R^{2n},\quad 
P'= b^{ij}  {\partial_{y^i}}\otimes {\partial_{y^j}} \in   \R^{2n}\otimes \R^{2n}
$$
be propagators   representing a line and a self loop respectively. 
Define   $\Omega^\*_{S^1[m]}$-linear operators
  $$\partial_P,\partial_{P'}:\Omega^\*_{S^1[m]}\otimes (\hat{\Omega}^{-\*}_{{2n}})^{\otimes m}\rightarrow \Omega^\*_{S^1[m]}\otimes (\hat{\Omega}^{-\*}_{{2n}})^{\otimes m}$$
  by
  \begin{align*}
  \partial_{P}(a_1\otimes\cdots\otimes a_m)&:=\frac{1}{2}\sum_{1\leq\alpha\neq\beta\leq m}\pi^*_{\alpha\beta}(f)\otimes \bracket{a^{ij}a_1\otimes\cdots\otimes\mathcal{L}_{\partial_{y^i}}a_{\alpha}\otimes\cdots\otimes\mathcal{L}_{\partial_{y^j}}a_{\beta}\otimes\cdots \otimes a_m}.
  \end{align*}
\begin{align*}
  \partial_{P'}(a_1\otimes\cdots\otimes a_m)&:=\frac{1}{2}\sum_{1\leq\alpha\leq m}   \bracket{b^{ij}a_1\otimes\cdots\otimes\mathcal{L}_{\partial_{y^i}}\mathcal{L}_{\partial_{y^j}}a_{\alpha}\otimes\cdots\otimes a_m}.
  \end{align*}
Here $a_i\in \hat\Omega^{-\*}_{{2n}}$ and $\pi_{\alpha\beta}:S^1[m]\rightarrow S^1[2]$ is the forgetful map to the two points indexed by $\alpha,\beta$. 
\end{defn}

We also denote
$$
\int_{S^1[m]}: \Omega^\*_{S^1[m]}\otimes (\hat{\Omega}^{-\*}_{{2n}})^{\otimes m}\to (\hat{\Omega}^{-\*}_{{2n}})^{\otimes m}
$$
by integrating out differential forms on $S^1[m]$.

\subsubsection{Free correlation map}


We generalize the construction of correlation maps in \cite{Localized} to the orbifold case.

\begin{defn} \label{defn-correlation}
Define the $g$-twisted free correlation map 
$$
\langle-\rangle_{free}^g:
C_{-\bullet}(\gl_r(\mathcal{W}_{2n}),g)^{C(g)}
\rightarrow
 \bracket{\hat{\Omega }^{-\bullet }_{2k}}^{C(g)}
$$
by
$$
\langle \hat{\mathcal{O}}_0\otimes \hat{\mathcal{O}}_1\otimes \cdots\otimes \hat{\mathcal{O}}_m\rangle_{free}^g=
\mathrm{det}(1-{g_\perp}^{-1})^{-1}\cdot
\sigma_z \bracket{
\mathrm{tr}_g \int_{S^1_{cyc}[m+1]} d\theta_0 \cdots d\theta_{m}e^{\hbar\partial_{P_1}}e^{\hbar\partial_{P_2}}(\hat{\mathcal{O}}_0\otimes \d  \hat{\mathcal{O}}_1\otimes \cdots \otimes \d  \hat{\mathcal{O}}_m)
}.
$$
Here $\partial_{P_1},\partial_{P_2}$ are applied to all $m+1$ factors,
and $\sigma_z$ is the symbol map which sets $z^j,dz^j$ to $0$.

\end{defn}

\begin{lem}\label{lem-g-intertwine}
The following properties hold
\begin{itemize}
\item[(1)] The $g$-twisted free correlation map intertwines the Hochschild boundary operator $b_g$ with the operator $\hbar\Delta$  
$$\hbar\Delta\langle  \hat{\mathcal{O}}\rangle_{free}^g=\langle b_g( \hat{\mathcal{O}})\rangle_{free}^g
.$$

\item[(2)] The $g$-twisted free correlation map intertwines the Connes operator $B_g$ with the de Rham differential $\d  $ 
$$\d   \langle  \hat{\mathcal{O}}\rangle_{free}^g=\langle B_g( \hat{\mathcal{O}})\rangle_{free}^g
.$$
\end{itemize}
Here $\hat{\mathcal{O}}=\hat{\mathcal{O}}_0 \otimes \hat{\mathcal{O}}_1\otimes\cdots\otimes \hat{\mathcal{O}}_m$.
\end{lem}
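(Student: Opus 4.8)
Both identities are instances of Stokes' theorem on the compactified configuration space $S^1_{cyc}[m+1]\cong S^1\times \Delta_m$, adapting the untwisted argument of \cite{Localized} to the monodromy of $\mathcal L_g$. The first step is to separate the tangential and normal directions along $\R^{2n}=\R^{2k}\oplus\R^{2n-2k}$: the line propagator splits as $P_1=P_{11}+P_{12}$, with $P_{11}=(u-\tfrac12)\Pi_1$ the non-constant tangential part already used in \cite{Localized}, while $P_{12}$, together with the self-loops $P_2,P_3$, is constant on $S^1[2]$ in the normal directions. Since only $P_{11}$ carries a $du$, only it contributes to the configuration-space differential; the normal propagators enter as constant Wick contractions which, combined with the vacuum factor $\det(1-g_\perp^{-1})^{-1}$, are eventually truncated by $\sigma_z$. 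This reduces the statement to the interplay of $d_{S^1_{cyc}[m+1]}$, the single propagator $P_{11}$, and the boundary strata of $\Delta_m$, exactly as in the manifold case, together with the new contributions coming from the twist defect.

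\textbf{Part (1).} To prove $\hbar\Delta\langle\hat{\mathcal O}\rangle_{free}^g=\langle b_g(\hat{\mathcal O})\rangle_{free}^g$ I would first record the propagator identity $d_{S^1[2]}P_{11}=du\,\Pi_1$. Differentiating the integrand of Definition \ref{defn-correlation} along $S^1_{cyc}[m+1]$ then brings down a $du$ times the $\Pi_1$-contraction which, together with the operators $\d$ already inserted on $\hat{\mathcal O}_1,\dots,\hat{\mathcal O}_m$, assembles into $\hbar\Delta=[\d,\hbar\iota_{\Pi_1}]$ acting on the output; as $\hbar\Delta$ commutes with $\int_{S^1_{cyc}[m+1]}$ this bulk term is $\hbar\Delta\langle\hat{\mathcal O}\rangle_{free}^g$. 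By Stokes' theorem it equals the sum over the $m+1$ codimension-one faces of $\Delta_m$, each the collision $u_{i,i+1}\to 0$ of two cyclically adjacent points. On an interior face the coincidence limit of the propagators reproduces the Moyal-Weyl product $\hat{\mathcal O}_i\star\hat{\mathcal O}_{i+1}$, giving the terms $\sum_{i=1}^{m-1}(-1)^i(\cdots\otimes \hat{\mathcal O}_i\hat{\mathcal O}_{i+1}\otimes\cdots)$ and the untwisted wrap-around $(-1)^m\hat{\mathcal O}_m\hat{\mathcal O}_0\otimes\cdots$ of $b_g$. The genuinely new face is the one where $p_1$ meets $p_0$ across the twist defect: there the monodromy of $\mathcal L_g$ acts by $g$ on $\hat{\mathcal O}_1$, producing $\hat{\mathcal O}_0\,g(\hat{\mathcal O}_1)\otimes\hat{\mathcal O}_2\otimes\cdots$. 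Matching these faces with $b_g$ and absorbing the inserted $g$ through the cyclic identity \eqref{Trace} for $\mathrm{tr}_g$ gives the claim.

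\textbf{Part (2).} For $\d\langle\hat{\mathcal O}\rangle_{free}^g=\langle B_g(\hat{\mathcal O})\rangle_{free}^g$ I would use that $\d=dy^i\partial_{y^i}$ acts on the output only through the common zero mode, commutes past $e^{\hbar\partial_{P_1}}e^{\hbar\partial_{P_2}}$, and annihilates the ($y$-independent) propagators. Since $\d^2=0$ and the factors $\hat{\mathcal O}_1,\dots,\hat{\mathcal O}_m$ are already differentiated in Definition \ref{defn-correlation}, the Leibniz rule forces $\d$ of the correlation to act only on the undifferentiated factor $\hat{\mathcal O}_0$, promoting it to $\d\hat{\mathcal O}_0$. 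The transgression argument of \cite{Localized} then identifies this with the insertion of a unit $1$ at a new marked point whose position is integrated over $S^1$: the leading term $1\otimes\hat{\mathcal O}_0\otimes\cdots\otimes\hat{\mathcal O}_m$ of $B_g$ arises this way, and the remaining cyclically permuted terms, with $g$ applied to the factors transported across the defect, come from re-expressing the same $S^1_{cyc}[m+1]$-integral via the cyclic symmetry \eqref{Trace}. Together these reconstruct the twisted cyclic sum defining $B_g$.

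\textbf{Main obstacle.} After the tangential--normal splitting the interior/Stokes step is formally identical to \cite{Localized}; the real work is the bookkeeping at the twist defect. One must check that the coincidence limit of the twisted propagators---whose normal parts involve $\tfrac{1}{1-g_\perp^{-1}}$ and $\tfrac{g_\perp^{-1}}{1-g_\perp^{-1}}$---still reconstructs the associative product, and that the monodromy produced at the seam is exactly the $g$ occurring in $b_g$ and $B_g$, and not $g^{-1}$ or some conjugate of it. Aligning these $g$-factors with \eqref{Trace} and with the $C(g)$-invariance of the chains, while tracking the Koszul signs generated by commuting the odd operators $\d$ and $\iota_{\Pi_1}$ through the tensor factors and carrying the normalization $\det(1-g_\perp^{-1})^{-1}$ so that the normal boundary contributions cancel against it, is where essentially all the care is required.
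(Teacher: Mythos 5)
Your strategy is viable, and it is essentially the ``from scratch'' version of the argument: it redoes, on the twisted correlation, the configuration-space Stokes analysis that underlies Lemmas 3.5 and 3.6 of \cite{Localized}. But it is organized quite differently from the paper's proof, and the difference is instructive. The paper does no configuration-space analysis here at all: it observes that under the splitting $\gl_r(\mathcal{W}_{2n})\simeq \mathcal{W}_{2k}\otimes\mathcal{W}_{2n-2k}\otimes\gl_r$, writing $\hat{\mathcal{O}}_i=a_i\otimes b_i\otimes M_i$, the correlation map of Definition \ref{defn-correlation} factorizes exactly as a product of three maps, $\langle \hat{\mathcal{O}}_0\otimes\cdots\otimes\hat{\mathcal{O}}_m\rangle^g_{free}=\tau_0(a_0\otimes\cdots\otimes a_m)\cdot\tau_1(b_0\otimes\cdots\otimes b_m)\cdot \mathrm{tr}_g(M_0\otimes\cdots\otimes M_m)$, where $\tau_0$ contains the $S^1_{cyc}[m+1]$-integral and the propagator $P_{11}$ only, $\tau_1$ contains the constant propagators $P_{12},P_2$ (rewritten through $P_3$ and the auxiliary product $\hat\star$) together with the prefactor $\det(1-{g_\perp}^{-1})^{-1}$, and $\mathrm{tr}_g$ is the twisted matrix trace. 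Because $g$ acts trivially on the tangential variables, $\tau_0$ is exactly the untwisted free correlation map, so its intertwining properties are quoted from Lemmas 3.5 and 3.6 of \cite{Localized}; the normal factor $\tau_1$ is recognized as Fedosov's $g$-trace on $\mathcal{W}_{2n-2k}$, whose twisted-trace identity is Equation (1.9) of \cite{fe:g-index}; and the matrix factor is handled by \eqref{Trace}. Your route keeps the three ingredients entangled in a single Stokes argument and treats the defect geometrically; the paper's factorization is precisely what lets it avoid ever analyzing collision limits near the seam.

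That difference also locates the one genuine gap in your write-up. What you call ``bookkeeping at the twist defect'' is not bookkeeping. Since the normal propagators $P_{12},P_2,P_3$ are constant on configuration space, they carry no $du$: they contribute nothing to the exterior differential that Stokes converts into boundary faces, and at a collision face they restrict to the same constant contraction. Hence your Stokes argument can only reconstruct the \emph{tangential} half of the product when two points collide. The statement that the constant Wick contractions built from $\frac{1}{1-{g_\perp}^{-1}}$ and $\frac{{g_\perp}^{-1}}{1-{g_\perp}^{-1}}$, normalized by $\det(1-{g_\perp}^{-1})^{-1}$, are compatible with merging factors by the genuine Moyal product and insert exactly the $g$ appearing in $b_g$ and $B_g$ is an independent algebraic fact --- the twisted-trace property of $b\mapsto \det(1-{g_\perp}^{-1})^{-1}\sigma_z\bracket{e^{\hbar\partial_{P_3}}b}$ on $(\mathcal{W}_{2n-2k},\star)$ --- which does not follow from coincidence limits and must be proved or cited separately; in the paper it is the imported Equation (1.9) of \cite{fe:g-index}. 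Until that identity is in place, the matching of your collision faces with $b_g$ in Part (1), and of your cyclic re-expansion with $B_g$ in Part (2), does not close.
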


\begin{proof}
Split $\gl_r(\mathcal{W}_{2n})$ into the tensor product $\mathcal{W}_{2k}\otimes\mathcal{W}_{2n-2k}\otimes\gl_r$, and let $\hat{\mathcal{O}}_i=a_i\otimes b_i \otimes M_i$ with respect to the decomposition. 
The correlation map is built of three parts
\begin{itemize}
\item[(1)] $\tau_0: (\mathcal{W}_{2k})^{\otimes m+1}\to \hat{\Omega }^{-m }_{2k},$
$$\tau_0(a_0\otimes \cdots \otimes a_m ) =\mult \bracket{
\int_{S^1_{cyc}[m+1]} d\theta_0 \cdots d\theta_{m}e^{\hbar\partial_{P_{11}}}(a_0\otimes \d a_1\otimes \cdots \otimes \d a_m)}.$$
Here $\mult$ is the map $a_0\otimes \cdots \otimes a_m \mapsto a_0 \cdots a_m$. 
\item[(2)]  $\tau_1: (\mathcal{W}_{2n-2k})^{\otimes m+1}\to \C(\!(\hbar)\!),$
$$\tau_1(b_0\otimes \cdots \otimes b_m )
= 
\mathrm{det}(1-{g_\perp}^{-1})^{-1}\cdot
\sigma_z(\mult(e^{\hbar\partial_{P_{12}}}e^{\hbar\partial_{P_2}}(b_0\otimes \cdots \otimes b_m))).$$
Here $\partial_{P_{12}}$ represents a line and $\partial_{P_2}$ represents a self loop. 
Recall $P_{3}=P_{12}+ \Pi_2$, we have
\begin{align*}
\tau_1(b_0\otimes \cdots \otimes b_m )
&= 
\mathrm{det}(1-{g_\perp}^{-1})^{-1}\cdot
\sigma_z(e^{\hbar\partial_{P_{12}}}\mult(b_0\otimes \cdots \otimes b_m)) \\
&= 
\mathrm{det}(1-{g_\perp}^{-1})^{-1}\cdot
\sigma_z(e^{\hbar\partial_{P_{3}}}\mult(e^{- \hbar\partial_{\Pi_2}}(b_0\otimes \cdots \otimes b_m))) \\
&=\mathrm{det}(1-{g_\perp}^{-1})^{-1}\cdot 
\sigma_z(e^{\hbar\partial_{P_3}} (b_0\hat\star \cdots \hat\star b_m)).
\end{align*}
Here the associative product $\hat\star$ is given by 
$$b_1\hat \star b_2=\mult(e^{- \hbar \cdot  \partial_{\Pi_2}}(b_1\otimes b_2 )).$$
We also denote 
$$
\tau_1'(b_0\otimes \cdots \otimes b_m )
=\mathrm{det}(1-{g_\perp}^{-1})^{-1}\cdot 
\sigma_z(e^{\hbar\partial_{P_3}} (b_0 \star \cdots  \star b_m))
$$
by using the star product
$$b_1  \star b_2=\mult(e^{  \hbar \cdot  \partial_{\Pi_2}}(b_1\otimes b_2 )).$$
\item[(3)] $\mathrm{tr}_{g}: (\gl_r)^{\otimes m+1}\to \C,$
$$\mathrm{tr}_{g}(M_0\otimes M_1\otimes \cdots \otimes M_m) = 
\mathrm{tr}(M_0 g M_1\cdots M_m).$$
\end{itemize}
Then 
$$\langle \hat{\mathcal{O}}_0 \otimes \hat{\mathcal{O}}_1\otimes \cdots\otimes \hat{\mathcal{O}}_m\rangle_{free}^g=
\tau_0(a_0\otimes \cdots \otimes a_m)
\tau_1(b_0\otimes \cdots \otimes b_m)
\mathrm{tr}_{g}(M_0\otimes \cdots \otimes M_m).$$
$\tau_0$ is the free expectation map in \cite{Localized}. 
The proof follows from Lemma 3.5, Lemma 3.6 in \cite{Localized}, 
Equation (1.9) in \cite{fe:g-index}   and  \eqref{Trace}.   
\end{proof}

\subsubsection{Interactive correlation map}\label{interactive-correlation}
We now extend the trace to certain Lie algebra cochains following \cite{Localized} . 

Recall the Weyl algebra $\mathcal{W}_{2n}=\mathbb{C}[[y^i,z^j]]\LS{\hbar}$. Denote $\mathcal{W}^+_{2n}=\mathbb{C}[[y^i,z^j]]\PS{\hbar}$,   
and its $g$-invariant part    by $(\mathcal{W}^+_{2n})^g$. 
The Moyal-Weyl product $\star$  induces a Lie algebra structure with Lie bracket defined by
$$
[f,g]:=[f,g]_\star:=\frac{1}{\hbar}(f\star g-g\star f).
$$

Let $\Sp_{2n}^g$ be the group of symplectic linear transformations on $\R^{2n}$ that commute with $g$. It acts on Weyl algebras naturally by inner automorphisms. 
Its Lie algebra $\mathfrak{sp}_{2n}^g$ can be identified with $g$-invariant quadratic polynomials in $\C[y^i,z^j]^g = \C[y^i] \oplus \C[z^j]^g$, 
which defines a natural  embedding 
$$
\mathfrak{sp}_{2n}^g
\into (\mathcal{W}^+_{2n})^g.
$$

Define a Lie algebra $\mathfrak{g}$
$$\mathfrak{g}:=(\mathcal{W}^+_{2n})^g \cdot\text{Id}+\hbar \gl_r((\mathcal{W}^+_{2n})^g)$$
and a subalgebra $\mathfrak{h}\subset \mathfrak{g}$
$$
\mathfrak{h}
=\sp_{2k}\oplus\sp_{2n-2k}^g\oplus \hbar\gl_r\oplus \mathbb{C}\oplus\oplus_{i>1} \hbar^i\mathbb{C}.
$$

Let us recall the shuffle product on tensors, which we denote by $\times_{sh}$.  The shuffle product of a $p$-tensor with a $q$-tensor is a $(p+q)$-tensor defined by
\begin{align*}
(v_1\otimes \cdots \otimes v_p)\times_{sh} (v_{p+1}\otimes \cdots \otimes v_{p+q})
=\sum_{\sigma\in Sh(p,q)}v_{\sigma^{-1}(1)}\otimes \cdots \otimes v_{\sigma^{-1}(p+q)}.
\end{align*}
Here $Sh(p,q)$ is the subset of the permutation group $S_{p+q}$ given by $(p,q)$-shuffles
$$
Sh(p,q)=\{\sigma \in S_{p+q}\,|\, \sigma(1)<\cdots <\sigma (p)\ \text{and}\ \sigma(p+1)<\cdots<\sigma(p+q)\}.
$$



In \cite{Localized}, the interaction term of topological quantum mechanics is encoded in the universal flat connection  $\widehat{\Theta}$ at Lie algebra level.
Let $\Id:\mathfrak{g}\rightarrow \mathfrak{g}$ be the identity map, which is viewed as a 1-chain in $C^1_{\Lie}(\mathfrak{g};\mathfrak{g})$ denoted by $\widehat{\Theta}$. It solves the Maurer-Cartan equation
$$
    \pa \widehat{\Theta}+{\frac1 2}[\widehat{\Theta},\widehat{\Theta}]=0
$$
in the dg Lie algebra $(C^\*_{\Lie}(\mathfrak{g})\otimes \g, \pa, [-,-])$, where $\pa$ is  the \CE differential on the first factor and the Lie bracket is taken in the second factor. When we identify 
$$
C^\*_{\Lie}(\mathfrak{g};\mathfrak{g})\iso C^\*_{\Lie}(\mathfrak{g})\otimes \g
$$
as a vector space, the usual \CE differential $\pa_{\Lie}$ on $C^\*_{\Lie}(\mathfrak{g};\mathfrak{g})$ becomes
$$\partial_{\Lie}=\partial+[\widehat{\Theta},-].$$

\begin{defn}\label{defn:interaction}
Define the $g$-twisted interactive correlation map 
$$\langle-\rangle_{int}^g\in
C_{\mathrm{Lie}}^\bullet \bracket{\mathfrak{g};
\Hom_{\mathbb{C}(\!(\hbar)\!)}\bracket{
CC_{-\bullet}^{per}(\gl_r(\mathcal{W}_{2n}),g)^{C(g)}
,
 \bracket{\hat{\Omega }^{-\bullet }_{2k}}^{C(g)}
}}$$
by
\begin{align*}
\langle \hat{\mathcal{O}}_0  \otimes \hat{\mathcal{O}}_1\otimes \cdots \otimes \hat{\mathcal{O}}_m\rangle_{int}^g
&:=\abracket{\sum_{j\geq 0} \hat{\mathcal{O}}_0  \otimes \bracket{\bracket{\hat{\mathcal{O}}_1\otimes \cdots \otimes \hat{\mathcal{O}}_m}  \times_{sh} (\widehat\Theta/\hbar)^{\otimes j} }}_{free}^g\\
&=\sum_{j\geq 0}\sum_{ \substack{i_0,\cdots,i_m\geq 0\\ i_0+\cdots+i_m=j}} \abracket{\widehat{\mathcal{O}}_0   \otimes (\widehat\Theta/\hbar)^{\otimes i_0}\otimes \hat{\mathcal{O}}_1\otimes (\widehat\Theta/\hbar)^{\otimes i_1}\otimes \cdots \otimes \hat{\mathcal{O}}_m\otimes(\widehat\Theta/\hbar)^{\otimes i_m}}_{free}^g.
\end{align*} 
\end{defn}

In the above expression, we view $\hat \Theta$ as a Lie algebra 1-cochain valued in $\gl_r((\mathcal{W}_{2n})^g)$, i.e., an element in $C^1(\mathfrak{g}, \gl_r((\mathcal{W}_{2n})^g))$, using the natural embedding $\mathfrak{g}\subset \gl_r((\mathcal{W}_{2n})^g)$.   Therefore the following component of $\langle-\rangle_{int}^g$
$$
\hat{\mathcal{O}}_0 \otimes \hat{\mathcal{O}}_1\otimes \cdots \otimes \hat{\mathcal{O}}_m\mapsto 
\abracket{ \hat{\mathcal{O}}_0 \otimes \bracket{\bracket{\hat{\mathcal{O}}_1\otimes \cdots \otimes \hat{\mathcal{O}}_m}  \times_{sh} (\widehat\Theta/\hbar)^{\otimes j} }}_{free}^g
$$
provides an element in 
$$\langle-\rangle_{int}^g\in
C_{\mathrm{Lie}}^j \bracket{\mathfrak{g};
\Hom_{\mathbb{C}(\!(\hbar)\!)}\bracket{
CC_{-\bullet}^{per}(\gl_r(\mathcal{W}_{2n}),g)^{C(g)}
,
 \bracket{\hat{\Omega }^{-\bullet }_{2k}}^{C(g)}
}}.
$$
Koszul sign convention is always assumed to organize such a map into a Lie algebra cochain.

\begin{thm}
$\langle-\rangle_{int}^g$ is closed under the differential $\partial_{\mathrm{Lie}}+b_g-\hbar\Delta$ and $B_g-\d $.
\end{thm}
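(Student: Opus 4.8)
The plan is to reduce both closedness statements to the free intertwining properties of Lemma \ref{lem-g-intertwine} together with the Maurer-Cartan equation for $\widehat\Theta$, following the strategy of \cite{Localized} while tracking the $g$-twist. I would write $\theta=\widehat\Theta/\hbar$; since $\theta$ is an odd Lie cochain and $[f,g]=\frac1\hbar(f\star g-(-1)^{|f||g|}g\star f)$, the Maurer-Cartan equation $\partial\widehat\Theta+\frac12[\widehat\Theta,\widehat\Theta]=0$ becomes $\partial\theta=-\theta\star\theta$. For a chain $\mathcal O=\hat{\mathcal O}_0\otimes\cdots\otimes\hat{\mathcal O}_m$ I write $\mathcal O^{ext}$ for its $\widehat\Theta$-extension, obtained by shuffling copies of $\theta$ into the slots after $\hat{\mathcal O}_0$, so that $\langle\mathcal O\rangle^g_{int}$ is $\langle\mathcal O^{ext}\rangle^g_{free}$ summed over the number $j$ of inserted $\theta$'s. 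The single structural fact I would record at the outset is that $\widehat\Theta=\Id$ is valued in $\mathfrak g\subset\gl_r((\mathcal W_{2n})^g)$, the $g$-invariant part; hence conjugation by $g$ fixes every inserted $\theta$, which is precisely what makes the insertions compatible with the twisted operators $b_g,B_g$ and with the twisted trace identity \eqref{Trace}. The scalar prefactor $\det(1-{g_\perp}^{-1})^{-1}$ commutes with everything and is irrelevant to closedness.

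For closedness under $B_g-\d$, the argument will be the simpler of the two, because the Connes operator never multiplies two factors and so produces no bracket or product terms. First I would check, using the $g$-invariance of $\theta$ to absorb the conjugations by $g$ that $B_g$ inserts on cyclically rotated slots, that $B_g(\mathcal O^{ext})$ is the $\widehat\Theta$-extension of $B_g(\mathcal O)$, with matching Koszul signs. Applying the free intertwining $\d\langle c\rangle^g_{free}=\langle B_g c\rangle^g_{free}$ to $c=\mathcal O^{ext}$ and summing over $j$ then gives $\d\langle\mathcal O\rangle^g_{int}=\langle B_g\mathcal O\rangle^g_{int}$, which is exactly closedness under $B_g-\d$; no $\partial_{\mathrm{Lie}}$ correction is needed here.

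For closedness under $\partial_{\mathrm{Lie}}+b_g-\hbar\Delta$ I would expand $\partial_{\mathrm{Lie}}=\partial+[\widehat\Theta,-]$ and use the free intertwining $\hbar\Delta\langle c\rangle^g_{free}=\langle b_g c\rangle^g_{free}$ to turn $-\hbar\Delta\langle\mathcal O^{ext}\rangle^g_{free}$ into $-\langle b_g\mathcal O^{ext}\rangle^g_{free}$. The Hochschild differential of the extended chain merges adjacent factors, and I would sort these merges into three types. Merges of two genuine observables $\hat{\mathcal O}_i\star\hat{\mathcal O}_{i+1}$, together with the wrap-around term that carries the $g$-twist through \eqref{Trace}, reassemble after re-extracting the $\theta$'s into the $\widehat\Theta$-extension of $b_g\mathcal O$, cancelling the outer $b_g$ term. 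Merges of a $\theta$ with an adjacent observable, from the two sides of each $\hat{\mathcal O}_i$, combine (thanks to the $1/\hbar$ built into $\theta$) into the commutator $[\widehat\Theta,\hat{\mathcal O}_i]$, reproducing the $[\widehat\Theta,-]$ term of $\partial_{\mathrm{Lie}}$. Merges of two adjacent $\theta$'s, summed over positions, are identified through $\partial\theta=-\theta\star\theta$ with the $\partial$ term of $\partial_{\mathrm{Lie}}$ and cancel it. Together these three cancellations give $(\partial_{\mathrm{Lie}}+b_g-\hbar\Delta)\langle-\rangle^g_{int}=0$.

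The hard part will be organizational rather than conceptual. The delicate point is the sign and shuffle bookkeeping: one must check that the odd degree of $\theta$, the $-1$-shifted grading on chains, and the shuffle permutations conspire so that ``merge two adjacent $\theta$'s and sum over positions'' reproduces $\partial\theta$ with the correct multiplicity and sign, and that the observable--$\theta$ merges genuinely assemble into commutators rather than one-sided products. The $g$-twist adds one further verification, namely that the special role of the $0$-th slot in $b_g$ and $B_g$ and the conjugations by $g$ in \eqref{Trace} remain consistent once only $g$-invariant $\theta$'s are inserted; this is where \eqref{Trace} and the $g$-invariance of $\widehat\Theta$ are essential. Finally I would invoke the factorization $\langle-\rangle^g_{free}=\tau_0\cdot\tau_1\cdot\mathrm{tr}_g$ from the proof of Lemma \ref{lem-g-intertwine} to localize $\Delta$ and $\d$ on the $\tau_0$ factor, after which the remaining combinatorics is identical to the plain case treated in \cite{Localized}.
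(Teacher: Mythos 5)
Your proposal is correct and takes essentially the same route as the paper: the paper's proof of this theorem is simply a citation to Theorem 3.8 of \cite{Localized}, whose argument is exactly your reduction to the free intertwining properties of Lemma \ref{lem-g-intertwine} combined with the Maurer--Cartan equation for $\widehat\Theta$ and the three-way sorting of Hochschild merges (observable--observable, $\theta$--observable giving $[\widehat\Theta,-]$, and $\theta$--$\theta$ giving $\partial$ via Maurer--Cartan). Your added observations on the $g$-invariance of $\widehat\Theta$ and the compatibility with the twisted trace \eqref{Trace} are precisely the routine modifications that make the cited argument carry over to the orbifold setting.
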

\begin{proof}
See the proof of Theorem 3.8 in \cite{Localized}.
\end{proof}

We $\mathbb{C} [u,u^{-1}]$-linearly  extend $\abracket{-}_{int}^g$ to an element
$$\langle-\rangle_{int}^g\in
C_{\mathrm{Lie}}^\bullet \bracket{\mathfrak{g};
\Hom_{\mathbb{C}(\!(\hbar)\!)}\bracket{
CC_{-\bullet}^{per}(\gl_r(\mathcal{W}_{2n}),g)^{C(g)}
,
 \bracket{\hat{\Omega }^{-\bullet }_{2k}}^{C(g)}
[u,u^{-1}]
}}.$$
It follows that 

\begin{thm}\label{thm-interaction-S1}
$\abracket{-}^g_{int}$ is closed under the differential
$(\partial_{\mathrm{Lie}}+b_g-\hbar\Delta)+u(B_g-\d ).$
\end{thm}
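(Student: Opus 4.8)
The plan is to derive Theorem \ref{thm-interaction-S1} directly from the preceding (unnumbered) theorem by a purely formal $\C\LS{\hbar}[u,u^{-1}]$-linear argument, treating $u$ as a central formal variable that commutes with all operators in play. The preceding theorem supplies the two separate closedness relations
$$
(\partial_{\mathrm{Lie}}+b_g-\hbar\Delta)\abracket{-}^g_{int}=0,\qquad (B_g-\d)\abracket{-}^g_{int}=0,
$$
where $b_g,B_g$ act by precomposition on the source $CC^{per}_{-\bullet}(\gl_r(\mathcal W_{2n}),g)^{C(g)}$ and $\hbar\Delta,\d$ act by postcomposition on the target $(\hat\Omega^{-\bullet}_{2k})^{C(g)}$. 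These are exactly the intertwining properties packaged through the free correlation map in Lemma \ref{lem-g-intertwine} together with the Maurer-Cartan property of $\widehat\Theta$, so I would take them as given.

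First I would record that, after the $\C\LS{\hbar}[u,u^{-1}]$-linear extension, the differential on the periodic cyclic source is $b_g+uB_g$ by the very definition of $CC^{per}_{-\bullet}(A,g)$, and that the matching differential on the target is $\hbar\Delta+u\d$ via the intertwining of Lemma \ref{lem-g-intertwine}; this is precisely why the target must be enlarged to $(\hat\Omega^{-\bullet}_{2k})^{C(g)}[u,u^{-1}]$, to receive the $u$-term $u(B_g-\d)\abracket{-}^g_{int}$. Consequently the total differential on the periodic Hom-complex, assembled from the source and target differentials together with the Chevalley-Eilenberg piece, is exactly $(\partial_{\mathrm{Lie}}+b_g-\hbar\Delta)+u(B_g-\d)$. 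Applying it to the $\C\LS{\hbar}[u,u^{-1}]$-linearly extended cochain and collecting powers of $u$, the $u^0$-coefficient is $(\partial_{\mathrm{Lie}}+b_g-\hbar\Delta)\abracket{-}^g_{int}$ and the $u^1$-coefficient is $(B_g-\d)\abracket{-}^g_{int}$, each of which vanishes by the two relations above; since $u$ is central and the extension is $\C\LS{\hbar}[u,u^{-1}]$-linear, no cross terms arise and the full expression is zero.

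For completeness I would also check that the combined operator is genuinely square-zero, so that ``closed'' is meaningful. This reduces to the graded-commutation relations $b_g^2=B_g^2=b_gB_g+B_gb_g=0$ (established after Definition \ref{defn-g-Hochschild}), their target-side counterparts $\Delta^2=\d^2=\Delta\d+\d\Delta=0$, the identity $\partial_{\mathrm{Lie}}^2=0$, and the compatibility of $\partial_{\mathrm{Lie}}$ with each of $b_g,B_g,\hbar\Delta,\d$; squaring the assembled operator and matching powers of $u$ then gives $0$.

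The only genuine content lies upstream, in the preceding theorem, whose proof mirrors Theorem 3.8 of \cite{Localized} and rests on Stokes-type boundary analysis of the configuration-space integrals; the present step is essentially bookkeeping. The one point deserving care is the degree and sign convention attached to $u$: one must ensure that $u(B_g-\d)$ anticommutes with $(b_g-\hbar\Delta)$ and commutes with $\partial_{\mathrm{Lie}}$, so that the assembled operator is a square-zero differential and the two $u$-graded coefficients decouple cleanly. I expect this sign and degree bookkeeping, rather than any new analytic input, to be the main (and only mild) obstacle.
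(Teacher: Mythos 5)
Your proposal is correct and matches the paper's own treatment: the paper derives Theorem \ref{thm-interaction-S1} as an immediate consequence (``It follows that'') of the preceding theorem, exactly via the $\mathbb{C}[u,u^{-1}]$-linear extension and the observation that the two separate closedness relations $(\partial_{\mathrm{Lie}}+b_g-\hbar\Delta)\abracket{-}^g_{int}=0$ and $(B_g-\d)\abracket{-}^g_{int}=0$ combine $u$-linearly. As you note, the genuine content sits upstream in that preceding theorem (whose proof the paper defers to Theorem 3.8 of \cite{Localized}), so your bookkeeping argument is precisely what the paper leaves implicit.
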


\section{Orbifold algebraic index}
As an application, we show how to establish the orbifold algebraic index theorem 
\cites{Pflaum2007An,fst} in terms of the topological quantum mechanical model. This generalizes the work \cite{Localized} to the orbifold case. We follow the strategy and presentation  in  \cite{Localized}, and illustrate the extensions to orbifolds.

\subsection{Twisted universal index}\label{sec: GM}

\subsubsection{Twisted universal trace}
Denote $\mathbb{K}:=\mathbb{C}\LS{\hbar}[u,u^{-1}].$
\begin{defn}\label{BV-integration} We define the following Berezin integration map by
\begin{align*}
  \int_{BV}: \hat{\Omega }^{-\bullet }_{2k}[u,u^{-1}] & \to \mathbb{K}
  \\
          a     &\mapsto u^k \sigma_y \bracket{ e^{\hbar \iota_{\Pi_1}/u}a}.
\end{align*}
Here $\sigma_y$ is the symbol map which sets $y^i,dy^i$'s to zero. 
\end{defn}

 $ \int_{BV}$ gives a  map of chain complexes
$$
\int_{BV}: \bracket{
 \bracket{\hat{\Omega }^{-\bullet }_{2k}}^{C(g)}
[u,u^{-1}]
,
\hbar \Delta+ u\d 
} \to (\mathbb{K},0)
$$
with degree $2k$.

\begin{defn}\label{defn-universal-trace}
We define the $g$-twisted universal trace map to be the element
$$
\widehat{\Tr}_g := \int_{BV}\abracket{-}_{int}^g
\in
C_{\mathrm{Lie}}^\bullet \bracket{\mathfrak{g};
\Hom_{\mathbb{C}(\!(\hbar)\!)}\bracket{
CC_{-\bullet}^{per}(\gl_r(\mathcal{W}_{2n}),g)^{C(g)}
,
\mathbb{K}
}}.$$

Given $a_1, \cdots,a_k\in \g$, we will write
$$
\widehat{\Tr}_g[a_1,\cdots, a_k](-)\in \Hom_{\mathbb{C}(\!(\hbar)\!)}\bracket{
CC_{-\bullet}^{per}(\gl_r(\mathcal{W}_{2n}),g)^{C(g)}
,
\mathbb{K}
}
$$
for the corresponding evaluation map on cyclic tensors.

\end{defn}

\begin{thm}\label{TRACE}
The $g$-twisted universal trace lies in the $(\g, \mathfrak{h})$-Lie algebra cochain complex
$$
\widehat{\Tr}_g
\in C_{\mathrm{Lie}}^\bullet \bracket{\mathfrak{g},\mathfrak{h};
\Hom_{\mathbb{C}(\!(\hbar)\!)}\bracket{
CC_{-\bullet}^{per}(\gl_r(\mathcal{W}_{2n}),g)^{C(g)}
,
\mathbb{K}
}}$$
and is closed under the differential
$$
    (\partial_{\mathrm{Lie}}+b_g+uB_g) \widehat{\Tr}_g=0.
$$
\end{thm}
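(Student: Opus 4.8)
The plan is to deduce both statements from the cocycle identity for $\abracket{-}^g_{int}$ (Theorem \ref{thm-interaction-S1}) by pushing it through the Berezin integration $\int_{BV}$, and to verify the basic property by contracting $\widehat{\Tr}_g$ against $\mathfrak{h}$ one generator at a time.

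\emph{Closedness.} Recall $\widehat{\Tr}_g=\int_{BV}\circ\abracket{-}^g_{int}$. The map $\int_{BV}$ operates only on the de Rham coefficient $\hat{\Omega}^{-\bullet}_{2k}$ and is $\mathbb{K}$-linear, so it commutes with $\partial_{\mathrm{Lie}}$, $b_g$ and $B_g$, which act on the Chevalley--Eilenberg and $g$-twisted Hochschild factors; moreover the excerpt records that it is a chain map $((\hat{\Omega}^{-\bullet}_{2k})^{C(g)}[u,u^{-1}],\hbar\Delta+u\d)\to(\mathbb{K},0)$, i.e.\ $\int_{BV}\circ(\hbar\Delta+u\d)=0$. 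Hence
\begin{align*}
(\partial_{\mathrm{Lie}}+b_g+uB_g)\widehat{\Tr}_g
&=\int_{BV}(\partial_{\mathrm{Lie}}+b_g+uB_g)\abracket{-}^g_{int}\\
&=\int_{BV}(\hbar\Delta+u\d)\abracket{-}^g_{int}=0,
\end{align*}
where the second equality uses Theorem \ref{thm-interaction-S1} (the difference of the two differentials annihilates $\abracket{-}^g_{int}$) and the last uses the chain-map property.

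\emph{Horizontality.} We must show $\iota_X\widehat{\Tr}_g=0$ for all $X\in\mathfrak{h}=\sp_{2k}\oplus\sp_{2n-2k}^g\oplus\hbar\gl_r\oplus\mathbb{C}\oplus\bigoplus_{i>1}\hbar^i\mathbb{C}$. Since $\widehat\Theta=\Id$, contraction with $X$ replaces one factor $\widehat\Theta/\hbar$ by $X/\hbar$; by the very definition of $\abracket{-}^g_{int}$ each such factor sits in a slot of index $\geq 1$, to which $\d$ is applied inside $\abracket{-}^g_{free}$ (slot $0$ being reserved for $\hat{\mathcal{O}}_0$). Thus $\iota_X\widehat{\Tr}_g$ is a sum of correlations each carrying $\d(X/\hbar)$ in one slot. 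If $X$ lies in $\sp_{2n-2k}^g$, $\hbar\gl_r$, or a central summand, then $X/\hbar$ has no dependence on the fixed coordinates $y$ (it is a $g$-invariant quadratic in $z$, a constant matrix, or a scalar), so $\d(X/\hbar)=0$ and the contraction vanishes. If $X\in\sp_{2k}$, a quadratic Hamiltonian in $y$, the insertion affects only the $y$-factor $\tau_0$ of the factorization $\langle-\rangle^g_{free}=\tau_0\cdot\tau_1\cdot\mathrm{tr}_g$ (the factors $\tau_1$ and $\mathrm{tr}_g$ being independent of $y$), and $\int_{BV}$ too acts only on the $y$-side; the required identity $\int_{BV}\,\tau_0(\cdots\otimes\d(X/\hbar)\otimes\cdots)=0$ is the horizontality of the non-orbifold universal trace established in \cite{Localized}.

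\emph{Invariance, and conclusion.} With horizontality available, $\mathfrak{h}$-invariance is formal. For $X\in\mathfrak{h}$, Cartan's formula on Lie algebra cochains reads $L_X=\iota_X\partial_{\mathrm{Lie}}+\partial_{\mathrm{Lie}}\iota_X$, and $\iota_X$ commutes with $b_g$, $B_g$; since $\iota_X\widehat{\Tr}_g=0$ and $\partial_{\mathrm{Lie}}\widehat{\Tr}_g=-(b_g+uB_g)\widehat{\Tr}_g$,
\begin{align*}
L_X\widehat{\Tr}_g
&=\iota_X\partial_{\mathrm{Lie}}\widehat{\Tr}_g
=-\iota_X(b_g+uB_g)\widehat{\Tr}_g\\
&=-(b_g+uB_g)\iota_X\widehat{\Tr}_g=0.
\end{align*}
Hence $\widehat{\Tr}_g$ is both horizontal and invariant for $\mathfrak{h}$, i.e.\ it lies in $C^\bullet_{\mathrm{Lie}}(\g,\mathfrak{h};-)$, and it is closed under $\partial_{\mathrm{Lie}}+b_g+uB_g$. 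The only genuinely computational input is the $\sp_{2k}$-horizontality $\int_{BV}\,\tau_0(\cdots\otimes\d(X/\hbar)\otimes\cdots)=0$; this is where the explicit Feynman integrals over $S^1_{cyc}[m+1]$ enter and is the crux, but because $X\in\sp_{2k}$ lives entirely in the fixed directions it reduces verbatim to the $2k$-dimensional computation of \cite{Localized}, with the orbifold data $\tau_1$, the vacuum factor $\det(1-{g_\perp}^{-1})^{-1}$ and $\mathrm{tr}_g$ riding along as inert multiplicative constants.
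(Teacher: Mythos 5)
Your proposal is correct, and its skeleton (closedness from Theorem \ref{thm-interaction-S1} plus the chain-map property of $\int_{BV}$, then horizontality, then $\mathfrak{h}$-invariance) is the paper's; but two of the three steps are executed differently, so a comparison is worthwhile. For horizontality the paper does not split into cases: it observes that for any $f\in\mathfrak{h}$ the inserted factor $\d f$ is at most linear in $y$, so the line propagator $\partial_{P_1}$ can hit that vertex at most once; an exactly-once hit produces the factor $\int_0^1\left(u-\tfrac12\right)du=0$ after the position integral, while an un-hit $y$ is killed by $\sigma_y$ inside $\int_{BV}$. Your case analysis ($\d(X/\hbar)=0$ for the $z$-quadratic, matrix and central summands, and reduction of the remaining $\sp_{2k}$ case to \cite{Localized} through the factorization $\tau_0\cdot\tau_1\cdot\mathrm{tr}_g$) is a legitimate repackaging of the same computation — note that the horizontality statement you cite from \cite{Localized} is itself proved by exactly the $\int_0^1\left(u-\tfrac12\right)du=0$ argument, so you have delegated the crux rather than avoided it, which is fine since the $\sp_{2k}$ insertion only touches the $y$-sector where the orbifold data are inert. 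Where the routes genuinely diverge is $\mathfrak{h}$-invariance: the paper checks directly that every operation entering the construction ($\mathrm{tr}_g,\iota_{\Pi_1},\partial_{P_1},\partial_{P_2},\sigma_y,\sigma_z$) is $\mathfrak{h}$-equivariant, whereas you deduce $L_X\widehat{\Tr}_g=0$ formally from Cartan's homotopy formula $L_X=\iota_X\partial_{\mathrm{Lie}}+\partial_{\mathrm{Lie}}\iota_X$ combined with horizontality and closedness. Your argument is slicker and spares the equivariance check, at the price of making invariance logically dependent on the other two statements (which is harmless here, since both are established on the absolute complex first); the paper's check is independent of them and self-contained. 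Both proofs are valid.
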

\begin{proof} The equation
$(\partial_{\mathrm{Lie}}+b_g+uB_g) \widehat{\Tr}_g=0$
follows from Theorem \ref{thm-interaction-S1} and the fact that $\int_{BV}$ is a cochain map.  We only need to show that $\widehat{\Tr}_g$ is $\mathfrak{h}$-invariant and vanishes when there is some argument taking value in $\mathfrak{h}$.

Note that for $f\in\mathfrak{h}$, $\d  f$ is linear in $y^i$ so $\partial_{P_1}$ can be only applied once. It will contribute to the expectation value map $\abracket{\cdots, f ,\cdots}_{free}^g$ a factor of
   $$\int^{1}_{0}\LR{u-\frac{1}{2}}du=0.$$
 Thus
$$\widehat{\Tr}_g[\cdots,f,\cdots](-)=\int_{BV} \langle \cdots, f, \cdots\rangle_{free}^g=0.$$

  Finally, $\widehat{\Tr}_g$ is $\mathfrak{h}$-invariant since the operations $\Tr_g,\iota_{{\Pi_1}},\partial_{P_{1}},\partial_{P_{2}},\sigma_y,\sigma_z$ are all $\mathfrak{h}$-invariant.  
\end{proof}

\subsubsection{Lie algebraic characteristic classes}\label{sec:Char}

We first review the Chern-Weil construction of characteristic classes in Lie algebra cohomology.  They will descent to the
usual characteristic forms via the Gelfand-Fuks map.

Let us first recall the construction of curvature forms in Lie algebra cohomology. Let $\g$ be a Lie algebra and $\mathfrak{h}$ be a Lie subalgebra. The curveture form 
$R\in \Hom(\wedge^2\g,\mathfrak{h})$
is defined by 
$$
R(\alpha,\beta):= [\pr(\alpha),\pr(\beta)]_{\mathfrak{h}}-\pr[\alpha,\beta]_{\g}, \quad \alpha, \beta\in \g
$$
with a choice of a $\mathfrak{h}$-equivariant splitting $\text{pr}:\g\to\mathfrak{h}$ of the inclusion $\mathfrak{h} \into\g$ as a vector space. $R$ defines a cohomology class $[R]$ in $H^2(\g,\mathfrak{h})$ which does not depend on the choice of $\pr$. Moreover, given an invariant polynomial $P\in \Sym^m(\mathfrak{h}^\vee)^{\mathfrak{h}}$ on $\mathfrak{h}$ of degree $m$, the cochain 
$$
P(R)\in C^{2m}(\g,\mathfrak{h};\C), \quad P(R):  \wedge^{2m}\g\stackrel{\wedge^m R}{\to} \Sym^{m}(\mathfrak{h})\stackrel{P}{\to} \C
$$
defines a cohomology class $[P(R)]$ in $H^{2m}(\g,\mathfrak{h};\C)$ which again does not depend on the choice of $\pr$. Therefore we have the analogue of Chern-Weil characteristic map
\begin{align*}
\chi: \Sym^\bullet(\mathfrak{h}^\vee)^{\mathfrak{h}}&\to H^{\*}(\g,\mathfrak{h};\C)\\
    P &\mapsto \chi(P):=[P(R)].
\end{align*}

Now we apply the above construction to our situation where 
$$\mathfrak{g}:=(\mathcal{W}^+_{2n})^g \cdot\text{Id}+\hbar \gl_r((\mathcal{W}^+_{2n})^g), \quad 
\mathfrak{h}=\sp_{2k}\oplus\sp_{2n-2k}^g\oplus \hbar\gl_r\oplus \mathbb{C} \oplus\oplus_{i>1} \hbar^i\mathbb{C},$$
Any element in $\mathfrak{g}$ can be uniquely written as
$$
f\cdot \text{Id}+\hbar A, \quad f\in \mathbb{C}[[y^1,\cdots,y^{2k},z^{2k+1},\cdots,z^{2n}]]^g, \quad A\in \gl_r((\mathcal{W}^+_{2n})^g).
$$

Define a $\mathfrak{h}-$equivariant projection $\pr:\mathfrak{g}\rightarrow \mathfrak{h}$ as follows
$$\pr(f\cdot\text{Id}+\hbar A):=
\bracket{
\frac{1}{2}\partial_{y^i}\partial_{y^j}f(0)y^i y^j,
\frac{1}{2}\partial_{z^k}\partial_{z^l}f(0)z^k z^l,
\hbar A_1(0),f(0),\bigoplus\limits_{i>1}\frac{1}{r}\trace(\hbar^i A_i(0))}.
 $$
 Here we write
  $\hbar A=\hbar A_1+\hbar^2 A_2+\cdots$ . We can write $\pr=\pr_1+\pr_2+\pr_3+\pr_4$ where
\begin{align*}
  \pr_1(f\cdot\text{Id}+\hbar A)&=\frac{1}{2}\partial_{y^i}\partial_{y^j}f(0)y^i y^j&&\in \mathfrak{sp}_{2k}\\
  \pr_2(f\cdot\text{Id}+\hbar A)&=\frac{1}{2}\partial_{z^k}\partial_{z^l}f(0)z^k z^l&&\in \mathfrak{sp}_{2n-2k}^g\\
  \pr_3(f\cdot\text{Id}+\hbar A)&=\hbar A_1(0)&&\in\hbar\gl_r\\
  \pr_4(f\cdot\text{Id}+\hbar A)&=f(0)+\sum_{i>1}\frac{1}{r}\trace(\hbar^iA_i(0))&&\in \mathbb{C}\oplus\oplus_{i>1} \hbar^i\mathbb{C}
\end{align*}

  The corresponding curvature is given by
 $$R:=[\pr(-),\pr(-)]-\pr([(-),(-)])\in \Hom_{\C}(\wedge^2 \g,\mathfrak{h}).
 $$
It can be decomposed into three terms $R=R_1+R_2+R_3+R_4,$ where
\begin{align*}
R_1&:=[\pr_1(-),\pr_1(-)]-\pr_1[-,-]&&\in \Hom_{\C}(\wedge^2 \g, \sp_{2k})\\
R_2&:=[\pr_2(-),\pr_2(-)]-\pr_2[-,-]&&\in \Hom_{\C}(\wedge^2 \g, \sp_{2n-2k}^g)\\
R_3&:=
-\pr_3[-,-]&&\in \Hom_{\C}(\wedge^2 \g, \gl_r)\\
R_4&:=-\pr_4[-,-]&&\in \Hom_{\C}(\wedge^2 \g, \C\oplus \oplus_{i>1}\hbar^i \C).
\end{align*}
Explicitly,
\begin{align*}
R_{1}((f+\hbar A),(g+\hbar B))&=-\frac{1}{2} 
\omega^{ij}(\partial_{y^i}f(0)\partial_{y^j}\partial_{y^p}\partial_{y^q}g(0)+\partial_{y^i}g(0)\partial_{y^j}\partial_{y^p}\partial_{y^q}f(0))y^py^q\\
R_{2}((f+\hbar A),(g+\hbar B))&=-\frac{1}{2} 
\omega^{ij}(\partial_{y^i}f(0)\partial_{y^j}\partial_{z^p}\partial_{z^q}g(0)+\partial_{y^i}g(0)\partial_{y^j}\partial_{z^p}\partial_{z^q}f(0))z^pz^q\\
R_3((f+\hbar A),(g+\hbar B))&=-\hbar 
\omega^{ij} (\partial_{y^i}f(0) \partial_{y^j}B_1(0)- \partial_{y^i}A_1(0)\partial_{y^j}g(0))\\
R_4((f+\hbar A),(g+\hbar B))&=-
\omega^{ij}\partial_{y^i}f(0)\partial_{y^j} g(0)+O(\hbar^2).
\end{align*}

Let us define three characteristic classes in the Lie algebra cohomology $H^\bullet(\mathfrak{g},\mathfrak{h};\mathbb{C}) $:
\begin{itemize}
    \item the $\hat A$ genus
$$\hat{A}(\mathfrak{sp}_{2k}):=\bbracket{\det\bracket{\frac{R_1/2}{\sinh(R_1/2)}}^{1/2}}
$$
    \item  the equivariant Chern Character
$$
\Ch_g^\star(\sp_{2n-2k}^g)
=\bbracket{\sum_{m=0}^\infty \frac{1}{m!}\tau_1 \underset{m \, \mathrm{times}}{\underbrace{\bracket{-\frac{R_2}{\hbar}, \cdots,-\frac{R_2}{\hbar}}}}}
=\bbracket{\sum_{m=0}^\infty \frac{1}{m!}\tau_1' \underset{m \, \mathrm{times}}{\underbrace{\bracket{\frac{R_2}{\hbar}, \cdots,\frac{R_2}{\hbar}}}}}
$$
(See proof of Lemma \ref{lem-g-intertwine} for the definition of $\tau_1,\tau_1'$.)
    \item the equivariant Chern Character
    $$\Ch_g(\gl_r):=[\Tr(g \cdot e^{R_3})]
$$
\end{itemize}

\subsubsection{Gauss-Manin connection and index}
In this section, we calculate the $\hbar$-variation of periodic cyclic homologies in terms of   Getzler's  Gauss-Manin connection \cite{getzler1993cartan} as in \cite{Localized}.   The   Gauss-Manin connection is shown in \cite{Localized} to have the form 
$$
\nabla_{\hbar\pa_\hbar}^{GM}=\hbar\pa_\hbar+\mathcal L_{\E} + \text{$(b_g+uB_g)$-homotopy},
$$
where $\mathcal L_{\E}$ is the Lie derivative with the vector field 
$\E=\frac{1}{2}\bracket{\sum_i y^i\frac{\partial}{\partial y^i}+ \sum_jz^j\frac{\partial}{\partial z^j}}$.
So we define

\begin{defn} Define the $\hbar$-connection $\nabla$ on the $\C\LS{\hbar}$-module $\W_{2n}$ and  $\hat{\Omega }^{-\bullet}_{2k}$ by
$$
  \nabla_{\hbar\pa_\hbar}={\hbar\pa_\hbar}+\mathcal L_{\E}.
$$
\end{defn}

$\nabla_{\hbar\pa_\hbar}$ is invariant under $g$ and induces $\hbar$-connections on   tensors of $\gl_r(\mathcal{W}_{2n})$, $C_{-\bullet}(\gl_r(\mathcal{W}_{2n}),g)$, 
 etc.
All of them will still be denoted by $\nabla_{\hbar\partial_\hbar}$.

\begin{defn} Let $V$ be a $\C\LS{\hbar}$-module where the connection $\nabla_{\hbar\partial_\hbar}$ is defined. Assume $V$ carries a structure of a $\g$-module. We define the linear map
$$
\nabla_{\hbar\pa_\hbar}:  C^\*(\mathfrak{g};V)\to C^\*(\mathfrak{g};V)
$$
by extending that on $V$. 
Similarly we define $\nabla_{\hbar\pa_\hbar}$ on $C^\*(\mathfrak{g}, \mathfrak{h};V)$.
\end{defn}

\begin{rem} We emphasize the following point as emphasized in \cite{Localized}. Although $\g$ is  a Lie algebra over $\C\PS{\hbar}$, we do not allow $\nabla_{\hbar\pa_\hbar}$ to act on the $\g$-factor. The reason is that cochains in $C^\bullet(\g; V)$ is only $\C$-linear, but not $\C\PS{\hbar}$-linear. For examples, the projections and curvatures defined in Section \ref{sec:Char} are only $\C$-linear maps.

\end{rem}

In particular,  we have now a well-defined operator
\begin{align*}
\nabla_{\hbar\pa_\hbar}: 
C_{\mathrm{Lie}}^\bullet \bracket{\mathfrak{g},\mathfrak{h};
\Hom_{\mathbb{C}(\!(\hbar)\!)}\bracket{
CC_{-\bullet}^{per}(\gl_r(\mathcal{W}_{2n}),g)^{C(g)}[u,u^{-1}]
,
\mathbb{K}
}}
\to \\
C_{\mathrm{Lie}}^\bullet \bracket{\mathfrak{g},\mathfrak{h};
\Hom_{\mathbb{C}(\!(\hbar)\!)}\bracket{
CC_{-\bullet}^{per}(\gl_r(\mathcal{W}_{2n}),g)^{C(g)}[u,u^{-1}]
,
\mathbb{K}
}}.
\end{align*}

\begin{lem}
The free correlation  map
$$
\langle-\rangle_{free}^g\in \Hom_{\C\LS{\hbar}}
\bracket{
C_{-\bullet}(\gl_r(\mathcal{W}_{2n}),g)^{C(g)}
,
 \bracket{\hat{\Omega }^{-\bullet }_{2k}}^{C(g)}}
$$
is flat with respect to $\nabla_{\hbar\pa_\hbar}$. In other words, for any 
$\hat{\OO}\in 
C_{-\bullet}(\gl_r(\mathcal{W}_{2n}),g)^{C(g)}$,
$$
\nabla_{\hbar\pa_\hbar}\abracket{ \hat{\OO}}_{free}^g =\abracket{ \mathcal \nabla_{\hbar\pa_\hbar} (\hat{\OO})}_{free}^g.
$$
Similarly, $\int_{BV}$ is flat with respect to $\nabla_{\hbar\pa_\hbar}$.
\end{lem}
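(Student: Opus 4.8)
The plan is to recognize $\nabla_{\hbar\pa_\hbar}=\hbar\pa_\hbar+\mathcal{L}_{\E}$ as a weight (grading) operator and to show that the free correlation map is homogeneous of weight zero with respect to this grading; flatness is then automatic. Concretely, I would introduce the total weight $W$ that assigns weight $\frac12$ to each generator $y^i,z^j,dy^i,dz^j$, weight $1$ to $\hbar$, and weight $0$ to $u$. Since $\mathcal{L}_{\E}$ multiplies a monomial of total degree $d$ in $y,z,dy,dz$ by $\frac{d}{2}$ while $\hbar\pa_\hbar$ counts powers of $\hbar$, we have exactly $\nabla_{\hbar\pa_\hbar}=W$ on $C_{-\bullet}(\gl_r(\mathcal{W}_{2n}),g)^{C(g)}$ and on $\bracket{\hat{\Omega}^{-\bullet}_{2k}}^{C(g)}$. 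Because $W$ is a derivation acting additively on tensor products, a $\C$-linear map between these graded modules intertwines the two copies of $\nabla_{\hbar\pa_\hbar}$ if and only if it preserves $W$. Hence the first assertion reduces to checking that $\abracket{-}^g_{free}$ is $W$-homogeneous of degree $0$.

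Next I would verify weight-$0$ homogeneity block by block in Definition \ref{defn-correlation}. The differential $\d=dy^i\pa_{y^i}$ trades a factor $y^i$ (weight $\frac12$) for $dy^i$ (weight $\frac12$), hence is weight $0$. Each propagator contraction $\pa_{P_1},\pa_{P_2}$ applies two derivatives contracted against the Poisson bivectors $\Pi_1,\Pi_2$, and since a bivector lowers $W$ by $1$, the accompanying factor $\hbar$ (weight $1$) makes $\hbar\pa_{P_1}$ and $\hbar\pa_{P_2}$, and therefore $e^{\hbar\pa_{P_1}}e^{\hbar\pa_{P_2}}$, weight $0$. The configuration-space data—the coefficient functions of $P_1,P_2$ pulled back to $S^1[m+1]$ and the fiber integration $\int_{S^1_{cyc}[m+1]}d\theta_0\cdots d\theta_m$—involve only the $S^1$-geometry and carry no $y,z,\hbar,u$ weight. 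Finally $\mathrm{tr}_g$, the scalar $\det(1-{g_\perp}^{-1})^{-1}$, and the symbol projection $\sigma_z$ are all weight $0$ (the latter being a graded projection that drops $z,dz$-monomials without rescaling the survivors). Composing, $\abracket{-}^g_{free}$ is weight $0$, which is the claim.

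The same bookkeeping settles flatness of $\int_{BV}$. With $u$ of weight $0$, the operator $\hbar\iota_{\Pi_1}/u$ is weight $0$ (the $-1$ from $\iota_{\Pi_1}$ balanced by the $\hbar$), so $e^{\hbar\iota_{\Pi_1}/u}$, the prefactor $u^k$, and the symbol map $\sigma_y$ are each weight $0$; hence $\int_{BV}$ is $W$-homogeneous of degree $0$ and commutes with $\nabla_{\hbar\pa_\hbar}$.

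The only genuinely delicate point is the $\hbar$-versus-bivector bookkeeping: one must confirm that in every place a Poisson bivector is contracted—inside $P_1$, $P_2$, the hidden $\Pi_2$ in the product $\hat\star$, and $\iota_{\Pi_1}$—it appears paired with exactly one power of $\hbar$ (respectively $\hbar/u$), so that the Moyal and propagator structures are weight-balanced, and that the integration over $S^1_{cyc}[m+1]$ introduces no compensating weight. Once the weight assignment is pinned down so that $\nabla_{\hbar\pa_\hbar}$ is literally this Euler-type grading operator, flatness of both $\abracket{-}^g_{free}$ and $\int_{BV}$ is immediate.
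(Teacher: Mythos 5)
Your proof is correct, and it is essentially the paper's argument: the paper disposes of this lemma with the words "this lemma follows from a direct check," and your weight-operator formulation (recognizing $\nabla_{\hbar\pa_\hbar}=\hbar\pa_\hbar+\mathcal L_{\E}$ as an Euler-type grading under which $y^i,z^j,dy^i,dz^j$ have weight $\tfrac12$ and $\hbar$ has weight $1$, then checking that $\d$, $\hbar\pa_{P_1}$, $\hbar\pa_{P_2}$, the configuration-space integral, $\mathrm{tr}_g$, $\sigma_z$, $u^k$, $\sigma_y$ and $e^{\hbar\iota_{\Pi_1}/u}$ are all weight $0$) is exactly that check, cleanly organized.
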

This lemma follows from a direct check. This also implies that $\abracket{-}_{free}^g$ is homotopic flat with respect to Getzler's Gauss-Manin connection.

Note that $\nabla_{\hbar\pa_\hbar}$ does not commute with $\partial_{\Lie}$. In fact, using $\partial_{\Lie}=\partial+[\hat\Theta,-]$, we find
$$
[\nabla_{\hbar\pa_\hbar}, \pa_{\Lie}]=[\nabla_{\hbar\pa_\hbar}, \pa+{\frac 1 \hbar }[\hat\Theta,-]_\star]=\bbracket{\nabla_{\hbar\pa_\hbar}(\hat\Theta/\hbar),-}_\star.
$$
Here $\hat\Theta$ is viewed as an element in $C^1(\mathfrak{g}, \gl_r((\mathcal{W}_{2n})^g))$ so $\nabla_{\hbar\pa_\hbar}$ can be applied.   To derive the last equality, we have used the fact that the Moyal commutator $[-,-]_\star$   is compatible with $\nabla_{\hbar\pa_\hbar}$. 

\begin{prop}\label{con:EACT}
In the cochain complex $C^\bullet (\mathfrak{g},\mathfrak{h};\mathbb{K})$,
$$\nabla_{\hbar\partial_{\hbar}}
(e^{R_4/u\hbar}\widehat{\mathrm{Tr}}_g(1))=
\partial_{\Lie}\text{-exact term}.$$
\end{prop}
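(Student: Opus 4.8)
The plan is to combine the $\partial_{\Lie}$-closedness of $\widehat{\Tr}_g(1)$ with a computation of its covariant $\hbar$-derivative, following the strategy of the analogous computation in \cite{Localized}. First I would record that $\widehat{\Tr}_g(1)$ is $\partial_{\Lie}$-closed: evaluating the cocycle equation $(\partial_{\Lie}+b_g+uB_g)\widehat{\Tr}_g=0$ of Theorem \ref{TRACE} on the cyclic chain $1$, and using $b_g 1=0$ and $B_g 1=0$ in the normalized complex (the unit is killed in $A/\C 1$), gives $\partial_{\Lie}\widehat{\Tr}_g(1)=0$. Since $R_4$ is the central component of the curvature it is $\partial_{\Lie}$-closed and represents a class in $H^2(\mathfrak{g},\mathfrak{h};\C)$, so $e^{R_4/u\hbar}$ and the product $e^{R_4/u\hbar}\widehat{\Tr}_g(1)$ are $\partial_{\Lie}$-closed as well. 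The Leibniz rule for $\nabla_{\hbar\partial_\hbar}$, together with the fact that $R_4/u\hbar$ is central-valued and commutes with itself, yields
$$
\nabla_{\hbar\partial_\hbar}\bracket{e^{R_4/u\hbar}\widehat{\Tr}_g(1)}
= e^{R_4/u\hbar}\bracket{\nabla_{\hbar\partial_\hbar}(R_4/u\hbar)\cdot\widehat{\Tr}_g(1)+\nabla_{\hbar\partial_\hbar}\widehat{\Tr}_g(1)}.
$$
Because multiplying a $\partial_{\Lie}$-exact cochain by the closed cochain $e^{R_4/u\hbar}$ preserves exactness, it then suffices to show that $\nabla_{\hbar\partial_\hbar}\widehat{\Tr}_g(1)+\nabla_{\hbar\partial_\hbar}(R_4/u\hbar)\cdot\widehat{\Tr}_g(1)$ is $\partial_{\Lie}$-exact.

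Next I would compute $\nabla_{\hbar\partial_\hbar}\widehat{\Tr}_g(1)$. Writing $\widehat{\Tr}_g(1)=\int_{BV}\sum_{j\geq 0}\abracket{1\otimes(\widehat\Theta/\hbar)^{\otimes j}}_{free}^g$ and using that both $\abracket{-}_{free}^g$ and $\int_{BV}$ are flat for $\nabla_{\hbar\partial_\hbar}$ (the preceding Lemma), the derivative acts only on the $\widehat\Theta/\hbar$ insertions. By the Leibniz rule this replaces a single factor $\widehat\Theta/\hbar$ by $\nabla_{\hbar\partial_\hbar}(\widehat\Theta/\hbar)$, so $\nabla_{\hbar\partial_\hbar}\widehat{\Tr}_g(1)$ equals the universal trace with one insertion of the $1$-cochain $\nabla_{\hbar\partial_\hbar}(\widehat\Theta/\hbar)$.

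I would then split $\nabla_{\hbar\partial_\hbar}(\widehat\Theta/\hbar)$ according to the decomposition $\mathfrak{g}=(\mathcal{W}^+_{2n})^g\cdot\mathrm{Id}\oplus\hbar\gl_r((\mathcal{W}^+_{2n})^g)$ into a central (scalar-valued) part and a remainder. For the remainder I would invoke the commutator identity $[\nabla_{\hbar\partial_\hbar},\partial_{\Lie}]=[\nabla_{\hbar\partial_\hbar}(\widehat\Theta/\hbar),-]_\star$ together with the trace property of $\widehat{\Tr}_g$ — its $b_g$-closedness encodes the vanishing of $\Tr_g$ on Moyal commutators — to rewrite the corresponding insertion as a $\partial_{\Lie}$-coboundary. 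The central part commutes through the correlation and merely multiplies the trace; invoking the Maurer--Cartan equation for $\widehat\Theta$ and the explicit form of the curvature $R_4$, I would identify its total contribution with $-\nabla_{\hbar\partial_\hbar}(R_4/u\hbar)\cdot\widehat{\Tr}_g(1)$, which cancels the curvature term and completes the reduction.

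I expect the main obstacle to be precisely this last identification of the central contribution with $-\nabla_{\hbar\partial_\hbar}(R_4/u\hbar)\cdot\widehat{\Tr}_g(1)$, including the correct $1/(u\hbar)$ normalization. This is where the orbifold structure genuinely enters: one must propagate the $g_\perp$-dependence of the loop propagator $P_3=-\bracket{1\otimes\tfrac{{g_\perp}^{-1}}{1-{g_\perp}^{-1}}}\Pi_2$ and of the vacuum factor $\det(1-{g_\perp}^{-1})^{-1}$ through the Berezin integral $\int_{BV}$ and the symbol maps, and verify that no anomalous term survives beyond the $\sp_{2k}$-part that already reproduces the flat non-orbifold computation of \cite{Localized}. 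The remaining ingredients — the degree bookkeeping in which $R_4/u$ sits in total degree $0$ so that exponentiation and multiplication are degree-preserving, and the Koszul-sign verification of the flatness Leibniz rule — are routine.
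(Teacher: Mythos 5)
Your preparatory reductions are correct, and they follow the same road map as the proof the paper actually invokes (the paper's ``proof'' is just a citation of Proposition 3.18 of \cite{Localized}): $\widehat{\Tr}_g(1)$ is $\partial_{\Lie}$-closed because $b_g(1)=0$ and $B_g(1)=1\otimes 1$ vanishes in the normalized complex; $R_4$ is $\partial_{\Lie}$-closed by Chern--Weil; the Leibniz rule and the flatness of $\abracket{-}_{free}^g$ and $\int_{BV}$ under $\nabla_{\hbar\pa_\hbar}$ reduce the statement to showing that the trace with one insertion of $\Xi:=\nabla_{\hbar\pa_\hbar}(\widehat\Theta/\hbar)$ agrees with $-\nabla_{\hbar\pa_\hbar}(R_4/u\hbar)\cdot\widehat{\Tr}_g(1)$ up to $\partial_{\Lie}$-exact terms. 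Up to this point the argument is sound and matches the cited strategy.

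The gap is that this last identity, which is the entire content of the proposition, is announced rather than proved, and the mechanism you propose cannot deliver it. First, the tools you invoke can only produce closedness, never exactness: applying $[\nabla_{\hbar\pa_\hbar},\partial_{\Lie}]=[\Xi,-]_\star$ to $\partial_{\Lie}\widehat{\Tr}_g(1)=0$ and using that $\widehat{\Tr}_g(1)$ is $\mathbb{K}$-valued (hence Moyal-central) gives only $\partial_{\Lie}\bracket{\nabla_{\hbar\pa_\hbar}\widehat{\Tr}_g(1)}=0$; the whole problem is to identify the cohomology class of this closed cochain, and neither the commutator identity nor the trace property computes a class. Second, the general principle you appeal to --- that insertions of non-central $1$-cochains become $\partial_{\Lie}$-coboundaries thanks to $b_g$-closedness --- is false: inserting the non-central cochain $\widehat\Theta/\hbar$ itself at all positions yields $\sum_{j}(j+1)T_{j+1}$, where $T_j$ denotes the Lie-cochain-degree-$j$ component of $\widehat{\Tr}_g(1)$; each $T_j$ is closed, and by Proposition \ref{ONELOOP} these components represent the nontrivial classes $u^k e^{-R_4/u\hbar}\hat A\cdot \Ch_g^\star\cdot \Ch_g$, so this insertion is closed but not exact. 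Hence exactness must come from the specific form of $\Xi$ (the weight cochain $a\mapsto\mathrm{wt}(a)\,a/\hbar$), not from a formal trace argument. Third, your splitting of $\Xi$ ``according to $\g=(\mathcal{W}^+_{2n})^g\cdot\Id\oplus\hbar\gl_r((\mathcal{W}^+_{2n})^g)$ into a central part and a remainder'' conflates scalar-valued with central: the center of $\g$ is $\C\PS{\hbar}\cdot\Id$, not $(\mathcal{W}^+_{2n})^g\cdot\Id$, and $R_4$ arises precisely from the $\pr_4$ (constant-term) projection of the curvature. So the decomposition as stated would not isolate the term $\nabla_{\hbar\pa_\hbar}(R_4/u\hbar)\cdot\widehat{\Tr}_g(1)$ that you need to cancel. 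What is missing is the actual computation carried out in \cite{Localized}: using the explicit weight-operator form of $\Xi$ together with the Maurer--Cartan equation for $\widehat\Theta$ to produce, simultaneously, the $\partial_{\Lie}$-exact piece and the curvature contribution with the correct $1/u\hbar$ normalization.
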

\begin{proof}
See the proof of Proposition 3.18 in \cite{Localized}.
\end{proof}

Given an even cochain $A=\sum_{p\ even}A_p,  A_p\in C_{Lie}^p(\g,\mathfrak{h};\mathbb{K})$, we denote $A_u:=\sum_{p}u^{-p/2}A_p$.

\begin{prop}\label{ONELOOP}
In the cochain complex $C^\bullet (\mathfrak{g},\mathfrak{h};\mathbb{K})$,
$$\widehat{\mathrm{Tr}}_g(1)=
u^ke^{-R_4/u\hbar}(\hat{A}(\mathfrak{sp}_{2k})_u\cdot Ch_g^\star(\mathfrak{sp}_{2n-2k})_u\cdot Ch_g(\gl_r)_{u}+O(\hbar)).$$

\end{prop}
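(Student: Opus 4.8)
The plan is to compute $\widehat{\mathrm{Tr}}_g(1) = \int_{BV}\langle 1\rangle_{int}^g$ directly as a sum of Feynman integrals and to extract its leading semiclassical order. Taking $\hat{\mathcal{O}}_0 = 1$ with no further tensor factors, Definition \ref{defn:interaction} collapses to
$$
\langle 1\rangle_{int}^g = \sum_{j\geq 0}\langle 1\otimes (\widehat\Theta/\hbar)^{\otimes j}\rangle_{free}^g,
$$
so $\widehat{\mathrm{Tr}}_g(1)$ becomes a sum over configurations of $j$ insertions of the universal connection $\widehat\Theta = \Id$ distributed around the circle. The first step is to invoke the factorization of the free correlation map from the proof of Lemma \ref{lem-g-intertwine}, namely $\langle -\rangle_{free}^g = \tau_0\cdot\tau_1\cdot\mathrm{tr}_g$, together with the splitting of each $\widehat\Theta$-insertion dictated by $\mathfrak{g} = (\mathcal{W}^+_{2n})^g\cdot\Id + \hbar\gl_r((\mathcal{W}^+_{2n})^g)$. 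This separates the computation into three commuting pieces: the $g$-invariant symplectic directions $\R^{2k}$ (carried by $\tau_0$ composed with $\int_{BV}$), the transverse twisted directions $\R^{2n-2k}$ (carried by $\tau_1$), and the matrix factor (carried by $\mathrm{tr}_g$). Since $\widehat{\mathrm{Tr}}_g$ lies in the relative complex $C^\bullet(\mathfrak{g},\mathfrak{h};\mathbb{K})$ by Theorem \ref{TRACE}, the Chern--Weil mechanism of Section \ref{sec:Char} lets me replace the resummed $\widehat\Theta$-insertions by polynomials in the curvature $R = R_1+R_2+R_3+R_4$, with the $(-)_u$ rescaling recorded by one factor of $u^{-1}$ per curvature (i.e.\ per Lie-cochain degree two) and tracked uniformly.

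The $\R^{2k}$-piece is identical to \cite{Localized}: modulo $\mathfrak{h}$-valued and $\partial_{\Lie}$-exact contributions the surviving vertices are the quadratic parts of $\widehat\Theta$, contracted by the line propagator $P_{11} = (u-\tfrac12)\Pi_1$ and by the $\int_{BV}$ contractions $e^{\hbar\iota_{\Pi_1}/u}$; resumming reproduces $\det\bigl(\tfrac{R_1/2}{\sinh(R_1/2)}\bigr)^{1/2}$, that is $u^k\,\hat{A}(\mathfrak{sp}_{2k})_u$, the $u^{-1}$ per curvature coming from each $\iota_{\Pi_1}/u$. For the matrix factor, inserting the $\gl_r$-component of $\widehat\Theta$, whose curvature is $R_3$, and applying $\mathrm{tr}_g$ (which inserts $g$ before the trace) exponentiates to $\Tr(g\,e^{R_3}) = \Ch_g(\gl_r)$. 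The scalar curvature $R_4$, produced by the central part of $\widehat\Theta$ weighted by $1/(u\hbar)$, exponentiates to the overall prefactor $e^{-R_4/u\hbar}$, which is the Lie-algebraic avatar of the symplectic volume $e^{-\omega_\hbar/\hbar}$ in the final index formula.

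The genuinely new and most delicate piece is the twisted transverse factor $\tau_1$, and this is where I expect the main obstacle. Here I would start from the closed form
$$
\tau_1'(b_0\otimes\cdots\otimes b_m) = \det(1-g_\perp^{-1})^{-1}\,\sigma_z\bigl(e^{\hbar\partial_{P_3}}(b_0\star\cdots\star b_m)\bigr)
$$
derived in the proof of Lemma \ref{lem-g-intertwine}, with self-loop propagator $P_3 = -\bigl(1\otimes\tfrac{g_\perp^{-1}}{1-g_\perp^{-1}}\bigr)\Pi_2$. Inserting the transverse quadratic Hamiltonians, whose curvature is $R_2$, and resumming the self-contractions by $P_3$ should produce exactly $\sum_{m\geq 0}\tfrac{1}{m!}\tau_1'\bigl(\tfrac{R_2}{\hbar},\dots,\tfrac{R_2}{\hbar}\bigr)$, which is by definition the equivariant Chern character $\Ch_g^\star(\mathfrak{sp}_{2n-2k})$. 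The two delicate points are: first, that the twisted propagator $P_3$ together with the vacuum determinant $\det(1-g_\perp^{-1})^{-1}$ assemble into this equivariant Chern character rather than a naive Gaussian—this is precisely the role of the vacuum normalization computed in Section \ref{section:1}; and second, that every diagram built from cubic or higher vertices, or containing more than a single contraction loop, carries a strictly positive power of $\hbar$ after the rescalings and is therefore absorbed into the $O(\hbar)$ remainder. Assembling the three factors with the prefactors $u^k$ and $e^{-R_4/u\hbar}$ then yields the stated formula.
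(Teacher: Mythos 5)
Your proposal follows the same overall route as the paper's proof in Appendix \ref{Feynman-diagram} (Feynman expansion of $\int_{BV}\langle 1\rangle_{int}^g$, factorization through $\tau_0\cdot\tau_1\cdot\mathrm{tr}_g$, identification of the contributions with the curvatures $R_1,R_2,R_3,R_4$), but two of your steps, as stated, would fail. The first is your $\hbar$-counting. In the expansion
$$
\widehat{\Tr}_g(1)=u^k\sum_{\Gamma}\frac{\hbar^{l(\Gamma)-k(\Gamma)}W_{\Gamma}}{|\Aut(\Gamma)|},
$$
a \emph{connected one-loop} diagram whose vertices carry no explicit $\hbar$ contributes at order $\hbar^{0}$, not $O(\hbar)$. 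The wheels built from the \emph{cubic} vertices $d_{2k}(\widehat{\gamma}^{(3)}_0)$ (two $y$- or $z$-legs and one $dy$-leg), each joined by a red edge to a one-valent vertex $d_{2k}(\widehat{\gamma}^{(1)}_0)$, are exactly such diagrams, and they are the sole source of $\log\hat{A}(\mathfrak{sp}_{2k})_u$ and $\log\Ch_g^\star(\sp_{2n-2k}^g)_u$. Your claim that ``every diagram built from cubic or higher vertices\dots carries a strictly positive power of $\hbar$'' would discard precisely these wheels, leaving $\hat A=1$ and only the vacuum determinant in place of the equivariant Chern character, and it contradicts the first half of your own argument. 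Relatedly, the ``surviving vertices'' are not the quadratic parts of $\widehat\Theta$: those lie in $\mathfrak{h}$ and are exactly what is removed when one replaces $\widehat\Theta$ by $\widehat\gamma=\widehat\Theta-\pr$ (legitimate by Theorem \ref{TRACE}). The effective quadratic insertions $R_1(\xi_i,\xi_j)$ and $R_2(\xi_i,\xi_j)$ arise from contracting a \emph{linear} vertex with a \emph{cubic} vertex along a red edge; this diagrammatic identification is the actual content behind the ``Chern--Weil mechanism'' you invoke, and it has to be computed, not cited.

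The second gap is that the factorization $\langle-\rangle_{free}^g=\tau_0\cdot\tau_1\cdot\mathrm{tr}_g$ does not by itself yield the \emph{product} $\hat{A}(\mathfrak{sp}_{2k})_u\cdot\Ch_g^\star(\sp_{2n-2k}^g)_u$. Since a single element $\widehat\gamma(\xi)$ mixes $y$'s and $z$'s, the diagram expansion contains connected one-loop wheels with both blue ($\partial_{\Pi_1}$) and green ($\partial_{P_3}$) edges; by the counting above these are order $\hbar^{0}$ and would produce cross-terms mixing $R_1$ and $R_2$. The paper disposes of them separately (the family $\mathfrak{G}_5$): once a wheel contains a green (position-independent) edge, each maximal blue chain in it has free endpoints, so the configuration-space integral acquires a factor $\int_0^1\left(u-\tfrac12\right)du=0$ and all mixed wheels vanish. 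Without this vanishing statement, or an equivalent, your assembly of the three sectors into the stated product is unjustified. The remaining ingredients of your proposal --- the reduction of Definition \ref{defn:interaction} at $\hat{\mathcal{O}}_0=1$, the tree diagrams giving $-R_4|_{\hbar=0}/u$ and $R_3/u$ (exponentiating, with $\mathrm{tr}_g$, to $\Tr(g\,e^{R_3/u})$), and the role of $P_3$ together with $\det(1-{g_\perp}^{-1})^{-1}$ inside $\Ch_g^\star$ --- do match the paper's computation.
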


See appendix \ref{Feynman-diagram} for a proof.

\begin{prop}\label{index thm}
As a cohomology class in $H^\bullet (\mathfrak{g},\mathfrak{h};\mathbb{K})$,
$$\widehat{\mathrm{Tr}}_g(1)=
u^ke^{-R_4/u\hbar}\cdot\hat{A}(\mathfrak{sp}_{2k})_u\cdot Ch_g^\star(\mathfrak{sp}^g_{2n-2k)})_u\cdot Ch_g(\gl_r)_{u}.$$
\end{prop}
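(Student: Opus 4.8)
The plan is to promote the cochain-level identity of Proposition~\ref{ONELOOP} to an identity of cohomology classes, the only gap being the remainder $O(\hbar)$. Write $\Phi:=e^{R_4/u\hbar}\widehat{\Tr}_g(1)$ and abbreviate the target characteristic combination by $J:=u^{k}\hat{A}(\sp_{2k})_{u}\cdot\Ch_g^\star(\sp_{2n-2k}^g)_{u}\cdot\Ch_g(\gl_r)_{u}$, so that the assertion is exactly $[\Phi]=[J]$ in $H^\bullet(\g,\mathfrak{h};\mathbb{K})$. First I would record that both cochains are $\partial_{\Lie}$-closed: for $\widehat{\Tr}_g(1)$ this is Theorem~\ref{TRACE} applied to the cycle $1$ (note $b_g 1=B_g 1=0$), while $R_4$ and the curvatures $R_1,R_2,R_3$ are $\partial_{\Lie}$-closed by the Chern--Weil construction of Section~\ref{sec:Char}. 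Since $R_4$ takes values in the central abelian summand $\mathbb{C}\oplus\oplus_{i>1}\hbar^i\mathbb{C}$ of $\mathfrak{h}$, the cup-product exponential $e^{R_4/u\hbar}$ is an invertible closed cochain, and $J$ is closed as a cup product of closed curvature polynomials. With this notation Proposition~\ref{ONELOOP} reads $\Phi=J+O(\hbar)$ at the cochain level, so it remains to show the closed discrepancy $\delta:=\Phi-J$ is $\partial_{\Lie}$-exact.

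The mechanism is horizontality with respect to the $\hbar$-connection $\nabla_{\hbar\pa_\hbar}=\hbar\pa_\hbar+\mathcal{L}_{\E}$. On one side, Proposition~\ref{con:EACT} states precisely that $\nabla_{\hbar\pa_\hbar}\Phi$ is $\partial_{\Lie}$-exact, so $[\Phi]$ is a flat section of the Gauss--Manin connection induced on cohomology. On the other side I would verify the analogous flatness for $J$: each curvature $R_i$ is weight-homogeneous and is mapped by $\nabla_{\hbar\pa_\hbar}$ to itself up to a $\partial_{\Lie}$-exact term, and the $u$-rescalings $(-)_u=\sum_p u^{-p/2}(-)_p$ together with the prefactor $u^k$ are designed so that the total combination $J$ is $\nabla_{\hbar\pa_\hbar}$-flat modulo exact. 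The flatness of $\langle-\rangle_{free}^g$ and of $\int_{BV}$ recorded earlier is what makes these computations compatible. Consequently $\delta$ is $\partial_{\Lie}$-closed and $\nabla_{\hbar\pa_\hbar}\delta$ is $\partial_{\Lie}$-exact, i.e. $[\delta]$ is a flat cohomology class.

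The proof then concludes by rigidity of flat sections. On cohomology, $\nabla_{\hbar\pa_\hbar}$ is induced from $\hbar\pa_\hbar$ together with the residual scaling transported through the flat maps $\langle-\rangle_{free}^g$ and $\int_{BV}$, and it acts on weight-homogeneous classes as multiplication by their total weight. A flat class therefore splits into a weight-zero part and $\partial_{\Lie}$-exact pieces. By construction $[J]$ is purely of weight zero, and Proposition~\ref{ONELOOP} identifies the weight-zero part of $[\Phi]$ with $[J]$; every remaining contribution to $[\delta]=[\Phi]-[J]$ carries strictly positive $\hbar$-order, hence strictly positive weight, so it must be exact. Thus $[\delta]=0$ and $[\Phi]=[J]$, which is the claim. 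This is the $g$-twisted orbifold analogue of the rigidity argument used for the untwisted index in \cite{Localized}.

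The main obstacle is the horizontality of the characteristic-class side together with this rigidity step: one must check explicitly that $\nabla_{\hbar\pa_\hbar}$ sends each of $R_1,\dots,R_4$ to an exact term after the $u$-rescaling, and that the weight grading on $H^\bullet(\g,\mathfrak{h};\mathbb{K})$ is such that positive-weight flat classes vanish. The twisted factor $\Ch_g^\star(\sp_{2n-2k}^g)$, whose representative is built from $\tau_1$ evaluated on copies of $R_2/\hbar$, demands the most care, since this is where the genuine $g$-dependence and the normalization $\det(1-{g_\perp}^{-1})^{-1}$ enter the curvature bookkeeping.
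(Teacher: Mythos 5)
Your proposal is correct and takes essentially the same approach as the paper: the paper's proof likewise combines the cochain-level expansion of Proposition \ref{ONELOOP} with Proposition \ref{con:EACT}, observing that the $O(\hbar)$ terms carry nontrivial weight under $\nabla_{\hbar\pa_\hbar}$ and are therefore $\partial_{\Lie}$-exact, so the identity holds upon passing to cohomology. Your extra verifications (closedness and $\nabla_{\hbar\pa_\hbar}$-flatness of the characteristic-class side, and the weight-rigidity step) merely spell out details the paper leaves implicit.
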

\begin{proof} By Proposition \ref{ONELOOP}, we can expand
$$
e^{R_4/u\hbar}\uTr_g(1)=u^k(\hat{A}(\mathfrak{sp}_{2k})_u\cdot Ch_g^\star(\mathfrak{sp}^g_{2n-2k)})_u\cdot Ch(\gl_r)_{u}+O(\hbar)).
$$
By Proposition \ref{con:EACT}, terms in $O(\hbar)$ have nontrivial weights in $\nabla_{\hbar\pa_\hbar}$ and hence are $\pa_{\Lie}$-exact. The theorem follows by passing to cohomology.
\end{proof}

\begin{rem}
The Lie algebra cohomology class is computed in \cite{Pflaum2007An} using large N method.
\end{rem}

\subsection{Descent on orbifolds}
In this section, we explain how to descend Lie algebraic constructions to smooth constructions on orbifolds by using a flat structure, following the presentation in  \cite{Localized}. 

\subsubsection{The idea of descent} 

Let $G$ be a Lie group with Lie algebra $\g$, and $P\to X$ be a $G$-principal bundle on $X$ with a $G$-invariant flat connection $\gamma\in\Omega^1(P,\g)$. $\gamma$ can be seen as a map
from $g^*=C^1_{\Lie}(\g)$ to $G$-invariant $1$-forms on $\Omega^1(P)$, and  choosing  components along $X$, we get a map $g^*\to\Omega^1(X)$. This map extends naturally to 
\begin{align*}
\desc: (C^\*_{\Lie}(\mathfrak{g}),\partial_{\Lie})&\rightarrow (\Omega^\*(X),d)\\
	\alpha &\mapsto \alpha(\gamma,\cdots,\gamma).
\end{align*}	
The Maurer-Cartan equation 
\begin{equation}\label{Maurer-Cartan}
d\gamma+\frac{1}{2}[\gamma,\gamma]=0\tag{$*$}
\end{equation}
says $\desc$ is a chain map, so descends to cohomologies. 

Let $V$ be a vector space with a $G$-action $G\to \Aut V$. Then the associated bundle $V_P:=P\times_{G}V$ is a vector bundle on $X$ with a flat connection, denoted by $\nabla^{\gamma}$. On the other hand, $V$ carries a $\g$-action $\g\to \End V$, by differentiating the $G$-action. 
Then there is a   chain map
\begin{align*}
\desc: (C^\*_{\Lie}(\mathfrak{g};V),\partial_{\Lie})&\rightarrow (\Omega^\*(X;{V_P}),\nabla^{\gamma})\\
	\alpha &\mapsto \alpha(\gamma,\cdots,\gamma).
\end{align*}	

In section \ref{sec: Fed-conn}, we will see
formal geometry of a symplectic orbifold produces an infinite-dimensional Lie algebra $\g$, which is not the Lie algebra of any Lie group $G$. On the other hand, the geometry of $X$ is encoded in a $K$-principal bundle, where $K$ can be seen as a subgroup of the non-existent group $G$. Such a pair $(\g,K)$ is called a Harish-Chandra pair and can be used to descent. See \cite{Localized} for precise definitions.

Let $(\g, K)$ be a \HC pair. Given a flat $(\mathfrak{g},K)$-bundle $P\rightarrow X$ and a $(\mathfrak{g},K)$-module $V$, we have similarly a chain map
\begin{align*}
\desc: (C^\*_{\Lie}(\mathfrak{g},K;V),\partial_{\Lie})&\rightarrow (\Omega^\*(X;{V_P}),\nabla^{\gamma})\\
	\alpha &\mapsto \alpha(\gamma,\cdots,\gamma)
\end{align*}
and by passing to cohomology, we obtain a descent map (again denoted by $\desc$)
 $$
  \desc: H^\*_{\Lie}(\mathfrak{g},K;V) \rightarrow H^\*(X;{V_P}).
 $$

In section \ref{sec: Fed-conn}, the Maurer-Cartan equation (\ref{Maurer-Cartan}) holds only up to a $Z(\g)$-valued 2-form on $X$. We say $\gamma$ is projective flat. Here $Z(\g)$ is the center of $\g$. Let $H$ be a subgroup of the center of $K$ such that $\Lie(H)=Z(\g)\cap \Lie(K)$, then $\gamma$ induces a flat $(\g/Z(\g), K/H)$ -principal bundle structure on $P/H$.

\subsubsection{Fedosov's connection on symplectic orbifolds} \label{sec: Fed-conn}

Let $X$ be a symplectic orbifold with a symplectic connection $\nabla$, and $E$ be a  rank $r$ orbifold vector bundle  on $X$ with a connection $\nabla^E$. The Weyl bundle
$$
\mathcal{W}_X^+:=F_{\Sp}(X)\times_{\Sp_{2n}}\mathcal{W}^+_{2n}
.
$$
is a bundle of algebras with a natural connection $\nabla^{\mc W}$.

Given a symplectic connection $\nabla$ and any sequences $\{\omega_k\}_{k\geq 1}$ of closed 2-forms on $X$,  Fedosov showed \cites{fst,connection-orbifold}   the connection $\nabla^{\mc W}\otimes 1 + 1\otimes \nabla^E$ on $\mc W^+_X \otimes \End(E)$ can be modified by an element 
$$\gamma\in \Omega^1_X(  \mc W^+_X \otimes \End(E))$$
to be a flat connection $D$, which is called Fedosov's flat connection
$$
D:=\nabla^{\mc W}\otimes 1 + 1\otimes \nabla^E+\frac{1}{\hbar}[\gamma,-]_{\star},
\qquad D^2={\frac1  \hbar}[\omega_{\hbar},-]_\star=0.
$$
Here $\omega_{\hbar} =-\omega+\sum_{k\geq 1}\hbar^k\omega_k$ is the characteristic class of the deformation quantization, which is an central element in $\Omega^2_X(  \mc W^+_X \otimes \End(E))$.

The symbol map  \cite{Fedosov-DQ} gives a natural isomorphism of the space of flat sections of $\mc W^+_X\otimes \End(E)$ with $\Gamma(X,\End(E))[[\hbar]]$, leading to a deformation quantization of the Poisson algebra $\Gamma(X,\End(E))$.

Next, we explain Fedosov's flat connection on orbifolds as a projective flat structure.

Let $i:\mathcal O \into X$ be a $g$-sector in $\wedge X$. The tangent bundle $T_X$ of $X$ pulls back  to an orbifold  vector bundle  $i^*T_X$ on $\mathcal O$, whose   fiber    at a point $x\in \mathcal O$ is described by the triple $(T_xX,\omega_x,g)$.
At any point $x\in\mathcal O$, the set of $g$-invariant symplectic linear isomorphisms $(\R^{2n},\omega,g)\simeq (T_xX,\omega_x,g)$ is a $\Sp_{2n}^g$-torsor. These torsors glue to a principal $\Sp_{2n}^g$-bundle $F_{\Sp^g_{2n}}(\mathcal O)$ on $\mathcal O$, which is  called the $g$-invariant symplectic frame bundle of $i^*T_X$.

The pull-back connection $i^*\nabla +i^*\nabla^E$ on $i^*T_X\times_{\OO}i^*E$ can be equivalently described by a $g$-invariant connection 1-form 
$$
A\in \Omega^1(F_{\Sp^g_{2n}}(\mathcal O)\times_{\mathcal{O}} i^*Fr(E),\sp_{2n}^g\oplus\gl_{r})
$$
on the  principal $\Sp_{2n}^g\times \GL_r$-bundle $F_{\Sp^g_{2n}}(\mathcal O)\times_{\mathcal{O}} i^*Fr(E)$. $A+i^*\gamma$ defines a projective flat $(\g, \Sp_{2n}^g\times \GL_r)$-bundle structure on $F_{\Sp^g_{2n}}(\mathcal O)\times_{\mathcal{O}} i^*Fr(E)$
\begin{equation}\label{FEDOSOV}
d(A+i^*\gamma)+{\frac1 2}[A+i^*\gamma, A+i^*\gamma]=i^*\omega_\hbar. \tag{F}
\end{equation}
We will slightly abuse the notation to use $\omega_{\hbar}$ to denote its pull-back $i^*\omega_{\hbar}$ to  $\mathcal O$.
 
We can apply the descent construction to the  \HC pair $(\mathfrak{g}/Z(\mathfrak{g}),\Sp_{2n}^g\times \PGL_r)$. Here  $Z(\g)=\C\PS{\hbar}$ and $Z(\g)\cap \sp_{2n}^g=0$.  
There is a natural isomorphism
$$
C^\bullet_{Lie}(\mathfrak{g}, \sp_{2k}\oplus \sp_{2n-2k}^g+ \hbar\gl_r+Z(\mathfrak{g});\mathbb{C}\LS{\hbar})\iso C^\bullet_{Lie}(\mathfrak{g}/Z(\g),\Sp_{2n}^g\times \text{PGL}_r ;\mathbb{C}\LS{\hbar}).
$$
As a corollary, we obtain the Gelfand-Fuks map of cochain complexes by descent
\begin{align*}
\desc:
\bracket{C^\bullet_{Lie}(\mathfrak{g}, \sp_{2k}\oplus \sp_{2n-2k}^g+ \hbar\gl_r+Z(\mathfrak{g});\mathbb{C}\LS{\hbar}),\partial_{Lie}}&\rightarrow \bracket{\Omega^\bullet ({\mathcal O})\LS{\hbar},d}\\
\alpha\quad \quad &\mapsto \alpha(i^*\gamma, \cdots, i^*\gamma).
\end{align*}
Here we do not insert $A$ since it lies in $\sp_{2n}^g+\gl_r$. 

\subsubsection{Descent of Lie algebraic  characteristic classes}

\begin{prop}\label{prop-char-desc}
Under the descent map $\desc: H^\*(\g,\mathfrak{h};\C)\to H^\*(\mathcal O)\LS{\hbar}$ via the Fedosov connection,
\begin{align*}
  &\desc(\hat{A}(\mathfrak{sp}_{2k}))=\hat{A}(\mathcal O)\in H^\bullet(\mathcal O,\mathbb{C})\\
&\desc(\Ch_g^\star(\sp_{2n-2k}^g))=(\det (1-{g_\perp}^{-1}e^{-R^\perp}))^{-1}\in H^\bullet(\mathcal O,\mathbb{C})  \\
&  \desc(\Ch_g(\gl_r))=e^{\omega_1}\Ch_g(E)\in H^\bullet(\mathcal O,\mathbb{C})\\
&  \desc(R_4)=\omega_{\hbar}-\hbar \omega_1\in H^\*(\mathcal O,\mathbb{C})\PS{\hbar}.
\end{align*}
Here  $\omega_1$ is the $\hbar^1$ term of $\omega_{\hbar}$, $\hat A(\mathcal O)$ is the $\hat A$-genus of   $\mathcal O$, $R^\perp$ is the curvature  of the normal bundle of the embedding $\mathcal O\hookrightarrow X$, and $\Ch(E)$ is the Chern character of the bundle $E$.

\end{prop}
\begin{proof}
We write down the first few terms of the 
Fedosov connection $\gamma$ \cite{fedosov-bundle}:  
  $$i^*\gamma=\omega_{ij}y^idx^j+\frac{1}{8}(R_{ijkl}y^iy^jy^k+R_{pqkl}z^pz^qy^k)dx^l+\hbar((R_E)_{ij}y^idx^j+(\omega_1)_{ij}y^idx^j)+\cdots$$
Here we omit the terms with weight bigger than 3.  Those term will not appear in our computation of curvature, the Lie bracket of higher weight term will be projected out by $\pr$.

The descent of $R_1$ is given by
$$
R_1(i^*\gamma,i^*\gamma)=\frac{1}{4}
R_{ijkl}
y^iy^jdx^k\wedge dx^l.
$$
This is the curvature $2$-form of $\mathcal O$. Applying the invariant $\hat{A}$ polynomial, we get $\hat{A}(\mathcal O)$. 

The second equation is proved in \cite{fe:g-index}.

The descent of $R_3$ is
$$
R_2(\gamma,\gamma)=\hbar(R^E_{kl}dx^k\wedge dx^l+\omega_1).
$$
Applying the invariant polynomial $\Tr(g\cdot e^{R_3})$, we get $e^{\omega_1}\Ch_g(E).$

For the last identity we use 
\eqref{FEDOSOV} and apply $\pr_4$ to both side
$$
\pr_4(d(A+i^*\gamma)+\frac{1}{2}[A+i^*\gamma,A+i^*\gamma])=\pr_4(\omega_{\hbar}),
$$
we get $\desc(R_4)=-\hbar \omega_1+\omega_{\hbar}$.
\end{proof}

\subsection{The algebraic index theorem for orbifolds}


Let $W_D$ be the algebra of quantum observables on $X$ ($D$ is the Fedosov connection)
$$
W_D:=\fbracket{s\in \Gamma(X,   \mc W^+_X \otimes \End(E))\,|\,  Ds=0 }.
$$
Restricted to a sector $i:\mathcal O\into X$, we get quantum observables on $\mathcal O$ 
$$
i^*W_D:=\fbracket{s\in \Gamma(\OO,   i^*\mc W^+_X \otimes i^*\End(E))\,|\,  Ds=0 }.
$$

We apply the Gelfand-Fuks descent to the universal trace map:
$$\desc(\widehat{\mathrm{Tr}}_g(-))\in \Omega^\*(\mathcal{O},E^{per}).$$
Here
$$
E^{per}:=((F_{\Sp^g_{2n}}(\mathcal O)\times_{\mathcal{O}} i^*Fr(E))\times_{(\Sp_{2n}^g\times \GL_r)} 
\Hom_{\mathbb{K}}\bracket{
CC_{-\bullet}^{per}(\gl_r(\mathcal{W}_{2n}),g)^{C(g)}
,
\mathbb{K}
}
.$$
$\widehat{\mathrm{Tr}}_g$ is $D+b_g+uB_g$-closed by construction. Restrict to flat sections of $D$, we get a cochain map
$$\desc(\widehat{\mathrm{Tr}}_g):(CC_{-\*}^{per}(i^*W_D),b_g+uB_g)\rightarrow (\Omega^\*(\mathcal{O})\LS{\hbar}[u,u^{-1}],d_{dR})$$
with degree $\dim \mathcal{O}$. 
For a quantum observable $f\in  W_D$, we can restrict it to $\mathcal O$, apply $\desc(\widehat{\mathrm{Tr}}_g)$ to it and then integrate over   $\mathcal{O}$ to get a $g$-twisted trace
$$
  \Tr_g(f)= \int_{\mathcal{O}} \desc(\widehat{\mathrm{Tr}}_g(f|_\mathcal{O}))\in \C\LS{\hbar}.
 $$
The value does not depend on $u$, by the degree reason.

The trace map has the following normalization property
\begin{prop}
 $$ \Tr_g(f)=\frac{(-1)^k}{\hbar^k} \left(\int_{\mathcal O}\mathrm{tr}(f|_\mathcal{O})\frac{\omega^k}{k!}+O(\hbar)\right),\quad \forall f\in \Gamma(X,\End(E)).$$
Here $k=\frac12 \dim \mathcal{O}$.
\end{prop}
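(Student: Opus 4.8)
The plan is to extract the leading power of $\hbar$ directly from $\Tr_g(f)=\int_{\mathcal O}\desc(\widehat{\Tr}_g(f|_{\mathcal O}))$ by isolating the unique Feynman diagram that survives the classical limit. Since $f\in\Gamma(X,\End(E))$ is $\hbar$-independent, on $\mathcal O$ it equals its own principal symbol: a matrix-valued function with trivial Weyl-algebra components, which I treat as the $0$-chain $\hat{\mathcal O}_0=f$. To produce a top-degree ($2k$-)form on $\mathcal O$ after descent one must insert exactly $2k$ copies of the Fedosov connection $i^*\gamma$, so I keep only the $j=2k$ summand of $\widehat{\Tr}_g=\int_{BV}\langle-\rangle^g_{int}$ from Definition \ref{defn:interaction}, where under $\desc$ each $\widehat\Theta/\hbar$ becomes $i^*\gamma/\hbar$.

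Using the Fedosov expansion $i^*\gamma=\omega_{ij}y^idx^j+(\text{weight}\ge 3)+O(\hbar)$, only the lowest-weight piece $\omega_{ij}y^idx^j$ contributes to the leading power, since every higher-weight or $\hbar$-linear term raises the $\hbar$-order after the Berezin integral. With these insertions the correlator simplifies drastically: each leg enters as $\d(\omega_{ij}y^idx^j/\hbar)=\omega_{ij}\,dy^i\,dx^j/\hbar$, which is constant in $y^i$ and linear in $dy^i$. Because $\partial_{P_1},\partial_{P_2}$ act only through $\mathcal L_{\partial_{y^i}}$, which annihilates both constants and $dy^i$, the factors $e^{\hbar\partial_{P_1}}e^{\hbar\partial_{P_2}}$ collapse to the identity and no internal contraction remains. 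Thus $\langle f\otimes(i^*\gamma/\hbar)^{\otimes 2k}\rangle^g_{free}$ reduces to the product of the configuration-space volume $\int_{S^1_{cyc}[2k+1]}d\theta_0\cdots d\theta_{2k}$, the multiplication $\mult$ giving $\hbar^{-2k}\bigwedge_\alpha\omega_{i_\alpha j_\alpha}dy^{i_\alpha}dx^{j_\alpha}$, the vacuum factor $\det(1-{g_\perp}^{-1})^{-1}$ from $\tau_1$, and the matrix trace $\mathrm{tr}_g(f\otimes\mathrm{Id}^{\otimes 2k})=\mathrm{tr}(fg)$.

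Applying $\int_{BV}$, the only term saturating the $2k$ internal one-forms $dy^i$ is $\tfrac1{k!}(\hbar\iota_{\Pi_1}/u)^k$; it contributes $u^k(\hbar/u)^k=\hbar^k$ and, contracting the $dy^i$ against the $\omega_{i_\alpha j_\alpha}$, turns $\bigwedge_\alpha dy^{i_\alpha}dx^{j_\alpha}$ into a multiple of the Liouville form $\omega^k/k!$ on $\mathcal O$. Combined with the $\hbar^{-2k}$ carried by $(\widehat\Theta/\hbar)^{\otimes 2k}$ this produces the advertised power $\hbar^{-k}$, and integrating over $\mathcal O$ yields $\int_{\mathcal O}\mathrm{tr}(f|_{\mathcal O})\,\omega^k/k!$. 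I would cross-check the numerical normalization against Proposition \ref{ONELOOP}: the same lowest-order diagram there reproduces $\hat A(\sp_{2k})=1+O(\hbar)$, $\Ch_g(\gl_r)=\mathrm{tr}(g e^{R_3})$, and the degree-$2k$ part of $u^ke^{-R_4/u\hbar}$, which by $\desc(R_4)=-\omega+O(\hbar)$ equals $\omega^k/(\hbar^k k!)$, so the constant can be read off from that computation rather than recomputed.

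The main obstacle is the bookkeeping of the overall constant and the $g$-dependent factors. I would have to combine the configuration-space volume, the combinatorics of $\tfrac1{k!}\iota_{\Pi_1}^k$ acting on $\bigwedge_\alpha\omega_{i_\alpha j_\alpha}dy^{i_\alpha}dx^{j_\alpha}$, and the Koszul and orientation signs, and verify that they assemble into precisely $(-1)^k$; the sign is the least transparent piece, as it arises only from the $\Pi_1=\omega^{-1}$ contraction and the Berezin orientation convention, and is where I expect to spend the most care. A second delicate point is reconciling the honest leading coefficient $\det(1-{g_\perp}^{-1})^{-1}\,\mathrm{tr}(fg)$ with the clean form $\mathrm{tr}(f|_{\mathcal O})$ in the statement: by Proposition \ref{prop-char-desc} the vacuum factor is the degree-$0$, $\hbar^0$ value of $\desc(\Ch_g^\star(\sp^g_{2n-2k}))=\det(1-{g_\perp}^{-1}e^{-R^\perp})^{-1}$, while the $g$ in $\mathrm{tr}(fg)$ is the cyclic structure of $E|_{\mathcal O}$, so both are exactly the equivariant and normal-direction normalizations intrinsic to the $g$-twisted trace density; making precise the sense in which they are absorbed into $\mathrm{tr}(f|_{\mathcal O})$ is the part of the argument requiring the most attention beyond the diagrammatics.
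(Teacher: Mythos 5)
Your diagrammatic reduction is exactly the paper's (one-line) proof: keep only tree-level configurations built from the lowest-weight Fedosov vertex $d_{2k}(\omega_{ij}y^i dx^j)$, observe that no propagator can attach to it, and evaluate $\int_{\mathcal O}\int_{BV}\mathrm{tr}(f|_{\mathcal O})\,e^{d_{2k}(\omega_{ij}y^i dx^j)/\hbar}$, of which your $j=2k$ term is the only piece producing a top form on $\mathcal O$. Your first flagged worry (the sign) is benign and resolves exactly within your setup: contracting $\tfrac{1}{(2k)!}\bigl(\omega_{ij}\,dy^i dx^j\bigr)^{2k}$ with $\tfrac{1}{k!}\bigl(\hbar\iota_{\Pi_1}/u\bigr)^{k}$ yields $(-1)^k\omega^k/(k!\hbar^k)$ (a one-line check for $k=1$ already fixes the sign, and the Darboux-pair factorization propagates it to all $k$).

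Your second flagged worry, however, should not be argued away, and here your bookkeeping is more careful than the printed proof. The free correlation map of Definition \ref{defn-correlation} carries the prefactor $\det(1-{g_\perp}^{-1})^{-1}$ and uses the twisted matrix trace $\mathrm{tr}_g$, so the diagrams you (and the paper) isolate actually give
$$
\Tr_g(f)=\frac{(-1)^k}{\hbar^k}\,\det(1-{g_\perp}^{-1})^{-1}\left(\int_{\mathcal O}\mathrm{tr}\bigl(f|_{\mathcal O}\,g\bigr)\,\frac{\omega^k}{k!}+O(\hbar)\right),
$$
and these two factors genuinely survive to leading order rather than being absorbed: they are precisely the curvature-free, $\hbar^0$ parts of $\desc(\Ch_g^\star(\sp_{2n-2k}^g))=\det(1-{g_\perp}^{-1}e^{-R^\perp})^{-1}$ and of $\desc(\Ch_g(\gl_r))=e^{\omega_1}\Ch_g(E)$ from Proposition \ref{prop-char-desc}. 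Consistency also forces them: setting $f=1$, the leading term of $\Tr_g(1)$ must reproduce the degree-zero parts of the sector integrand in the orbifold index theorem, which retains both $\Ch_g(E)$ and $\det(1-{g_\perp}^{-1}e^{-R^\perp})^{-1}$. So the displayed normalization in the Proposition (and the paper's proof, which silently writes $\mathrm{tr}(f|_{\mathcal O})$ where the computation produces $\det(1-{g_\perp}^{-1})^{-1}\mathrm{tr}(f|_{\mathcal O}\,g)$) is verbatim correct only on the untwisted sector $g=e$, where ${g_\perp}$ is absent and $\mathrm{tr}_g=\mathrm{tr}$. In short: your approach and execution coincide with the paper's; the residual tension you identified is an imprecision in the paper's statement, not a gap in your argument, and the correct resolution is to state the normalization with the two twisted factors included rather than to seek a mechanism that cancels them.
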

\begin{proof}
  It follows from Feynman diagram computations in appendix \ref{Feynman-diagram}. The $\hbar$-leading term of $\Tr_g(f)$ comes from the tree diagrams which only involves the vertex $d_{2k}(\omega_{ij}y^i dx^j)$, and this gives
  $$
  \int_{\mathcal O} \int_{BV}\text{tr}(f|_\mathcal{O})e^{d_{2k}( \omega_{ij}y^i dx^j)/\hbar}=\frac{(-1)^k}{\hbar^k}\int_{\mathcal O}\text{tr}(f|_\mathcal{O})\frac{\omega^k}{k!}.
  $$
\end{proof}

The index for the quantum algebra is given by
$$
\mathrm{Tr}(1):=\int_{\wedge {X}}\mathrm{desc}(\widehat{\mathrm{Tr}}_g(1)).
$$
Then Theorem \ref{index thm} and Proposition \ref{prop-char-desc} imply the following algebraic index theorem (see \cites{Pflaum2007An,fst})
\begin{thm}
Let $X$ be a compact smooth symplectic orbifold of dimension $2n$, and $E$ be a complex orbifold vector bundle over $X$. Let $W_D$ be the quantum algebra associated to the deformation quantization class $\omega_{\hbar}$. 
Then the algebraic index is 
$$ \Tr(1)=\int_{\wedge X}e^{-\omega_{\hbar}/\hbar}\frac{\hat{A}(\wedge X)\Ch_g(E)}{m \det (1-{g_\perp}^{-1}e^{-R^\perp})}.$$
Here $m$ is the locally constant function on $\wedge X$ which coincides for every sector $\mathcal O$ with the multiplicity of $\mathcal O$.
\end{thm}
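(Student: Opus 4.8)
The plan is to recognize the theorem as the geometric incarnation of the Lie-algebraic index computation already carried out, transported to the inertia orbifold by the Gelfand-Fuks descent. Concretely, I would start from the definition $\Tr(1) = \int_{\wedge X}\desc(\widehat{\Tr}_g(1))$, where the integral over $\wedge X$ is understood sector by sector: $\wedge X$ is the disjoint union of the sectors $\mathcal O$, and on each sector one integrates $\desc(\widehat{\Tr}_g(1))$ over $\mathcal O$ weighted by the reciprocal multiplicity $1/m$. The main input is Proposition \ref{index thm}, which identifies the class of $\widehat{\Tr}_g(1)$ in $H^\bullet(\g,\mathfrak h;\mathbb K)$ with $u^k e^{-R_4/u\hbar}\,\hat{A}(\sp_{2k})_u\,\Ch_g^\star(\sp_{2n-2k}^g)_u\,\Ch_g(\gl_r)_u$.

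Next I would push this class forward under $\desc$. Since $\desc$ is a morphism of differential graded algebras (it sends $\partial_{\Lie}$ to $d_{dR}$ and is multiplicative, as $\alpha\mapsto\alpha(i^*\gamma,\dots,i^*\gamma)$ respects wedge products of cochains), it descends to a ring homomorphism on cohomology and therefore distributes over the product above. Applying Proposition \ref{prop-char-desc} factor by factor replaces $\hat{A}(\sp_{2k})$ by $\hat{A}(\mathcal O)$, replaces $\Ch_g^\star(\sp_{2n-2k}^g)$ by $(\det(1-{g_\perp}^{-1}e^{-R^\perp}))^{-1}$, replaces $\Ch_g(\gl_r)$ by $e^{\omega_1}\Ch_g(E)$, and replaces $R_4$ by $\omega_\hbar-\hbar\omega_1$. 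The only genuine bookkeeping here is to combine the two exponential factors: $e^{-\desc(R_4)/u\hbar}$ supplies the $\omega_1$-correction that cancels against the $e^{\omega_1}$ arising from $\desc(\Ch_g(\gl_r))$, leaving exactly $e^{-\omega_\hbar/u\hbar}\,\Ch_g(E)$.

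At this stage the descended integrand is $u^k e^{-\omega_\hbar/u\hbar}\,\hat{A}(\mathcal O)\,(\det(1-{g_\perp}^{-1}e^{-R^\perp}))^{-1}\,\Ch_g(E)$, with every characteristic class carrying its $u$-normalization. I would then invoke the degree argument already flagged in the text: integration over $\mathcal O$ of real dimension $2k$ extracts the degree-$2k$ component, which in the $u$-normalization carries the weight $u^{-k}$; together with the prefactor $u^k$ this makes the integral independent of $u$, so one may set $u=1$. Summing over sectors with the multiplicity weights, and using that the $\hat{A}(\mathcal O)$ assemble to $\hat{A}(\wedge X)$, then yields the stated formula $\Tr(1)=\int_{\wedge X}e^{-\omega_\hbar/\hbar}\frac{\hat{A}(\wedge X)\Ch_g(E)}{m\det(1-{g_\perp}^{-1}e^{-R^\perp})}$.

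The heavy lifting has all been done in the preceding propositions, so the main obstacle is not a new computation but the careful assembly: verifying that $\desc$ is genuinely multiplicative on cohomology so that the product formula of Proposition \ref{index thm} survives descent, tracking the $u$-powers and signs so that the $\omega_1$ terms cancel cleanly, and making precise the normalization of $\int_{\wedge X}$ relative to $\int_{\mathcal O}$ so that the multiplicity factor $m$ appears exactly as written. I expect the $\omega_1$-cancellation and the $u$-independence degree count to be the most error-prone steps, while the multiplicative compatibility of descent and the identification with the Fedosov characteristic forms are standard once the Harish-Chandra descent of Section \ref{sec: Fed-conn} is in place.
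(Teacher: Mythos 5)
Your proposal is correct and is essentially the paper's own proof: the theorem is obtained in the paper in exactly this way, by descending the cohomological identity of Proposition \ref{index thm} through the Gelfand--Fuks map (which is multiplicative), substituting the four identities of Proposition \ref{prop-char-desc}, using the degree count to remove the $u$-dependence, and integrating sector by sector over $\wedge X$ with the multiplicity weights. The one point to watch is the sign bookkeeping you flag yourself: with the descent identities as printed ($\desc(R_4)=\omega_\hbar-\hbar\omega_1$ and $\desc(\Ch_g(\gl_r))=e^{\omega_1}\Ch_g(E)$) the two $\omega_1$-factors would add rather than cancel, so the cancellation you invoke is indeed what the final formula requires, but it presupposes a sign correction in one of those two identities of Proposition \ref{prop-char-desc}.
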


\appendix
\section{A brief review of orbifolds}
In this section we introduce some notions of orbifolds and refer to \cite{fe:refof} for more details. 

\begin{defn}
Let $X$ be a topological space. An orbifold chart on $X$ is a $4$-tuple $(\tilde U,G,U,\pi)$, where 
\begin{itemize}
    \item $U$ is an open subset of $X$,
    \item $\tilde U$ is a   contractible open subset of $\R^n$, 
    \item $G$ is a finite group of diffeomorphisms of $\tilde U$,
    \item $\pi:\tilde U\to U$ is a map which can be  factored as $\pi=\bar \pi\circ p$, where $p:\tilde U\to \tilde U/G$ is the quotient map 
    , and $\bar \pi:\tilde U/G\to U$ is a homeomorphism. 
\end{itemize}
\end{defn}

Two orbifold charts $(\tilde U_i,G_i,U_i,\pi_i)$, $(\tilde U_j,G_j,U_j,\pi_j)$
are called {compatible},  if for any points $\tilde x_i\in \tilde U_i$, $\tilde x_j\in \tilde U_j$ with $\pi_i(\tilde x_i)=\pi_j(\tilde x_j)$, there is a chart $(\tilde U_k,G_k,U_k,\pi_k)$ with $U_k\subset U_i\cap U_j$, open embeddings $\tilde U_k\into \tilde U_i$, $\tilde U_k\into \tilde U_j$, and group embeddings $G_k\into G_i$, $G_k\into G_j$, such that the embeddings of $\tilde U_k$ into $\tilde U_i,\tilde U_j$ are equivariant with the group embeddings and compatible with the embeddings $U_k$ into $U_i, U_j$.

\begin{defn}
An orbifold is a paracompact Hausdorff space $X$ with an equivalence class of  orbifold atlases. Here an orbifold atlas $\{(\tilde U_i,G_i,U_i,\pi_i)\}_{i\in I}$ is a collection  of compatible orbifold charts, such that $\{U_i\}_{i\in I}$ is  a cover of  $X$. 
\end{defn}


\begin{defn}
An orbifold vector (principal) bundle $F$ on $X$ is an object which assigns to an orbifold chart $(\tilde U_i,G_i,U_i,\pi_i)$  a $G$-equivariant vector (principal) bundle $\tilde F_i$ on $\tilde U_i$. These local equivariant bundles should be compatible with respect to change of orbifold charts. 

\end{defn}

Note that an orbifold bundle is not a  fiber bundle on the base space. On an orbifold bundle, we have the notion of sections and connections. 
Smooth functions, vector fields and differential forms are defined as sections of corresponding orbifold vector bundles. 

\begin{defn}
A symplectic orbifold is an orbifold $X$ with a non-degenerate closed $2$-form.
\end{defn}
The Linearization Theorem says that locally we can find Darboux coordinates  such that the orbifold group $G$ acts linearly. In other words, a symplectic orbifold is locally modeled by $(V/G,\omega)$, where $(V,\omega)$ is a symplectic vector space, and $G$ is a finite group of symplectic linear transformations. 


\begin{defn}
  A \textit{symplectic connection} on an orbifold $X$ is a torsion free connection $\nabla$ on the tangent bundle $T_X$ that is compatible with the symplectic form: $\nabla \omega=0$.
\end{defn}
Symplectic connections exist on any symplectic orbifold and are not unique.

Singularities of an orbifold have the form of fixed points of  finite group actions. The inertia orbifold $\wedge X$ of  an  orbifold $X$ collects all kinds of fixed point subsets. 
Let we explain this in an orbifold chart $(\tilde U,G,U,\pi)$. 
We can assume $G$ acts on $\tilde U$ linearly by the linearization theorem. 
For a conjugate class $[g]\in\Conj(G)$, the fixed point set of $g$ in $\tilde U$ is $\tilde U^{g}:=\{x\in \tilde U \,|\, gx=x \}$. When projected to $U$, it becomes $\pi(\tilde U^{g})=\tilde U^{g}/\!\!/C(g)$, where $C(g)=\{h\in G \,|\, gh=hg\}$. $\tilde U^{g}/\!\!/C(g)$ is an orbifold with a chart $(\tilde U^{g},C(g),\tilde U^{g}/\!\!/C(g),\pi|_{\tilde U^{g}})$. Thus, the disjoint union of all   kinds of singularities is an orbifold
$$
\wedge U=\coprod_{[g]\in\Conj (G)} \tilde U^{g}/\!\!/C(g)
.$$
\begin{defn}
The inertia orbifold $\wedge X$ is the orbifold with underlying set $\{(x,[g]) \,|\, x\in X, [g] \in \Conj(G_x)  \}$ and orbifold charts $(\tilde U^{g},C(g),\tilde U^{g}/\!\!/C(g),\pi|_{\tilde U^{g}})$ described above. 
\end{defn}
The orbifold structure of $\wedge X$ does not rely on the choice of orbifold charts of $X$.

Let $\mathcal O$ be a sector   of $\wedge X$, i.e., a minimal connected component of $\wedge X$. By construction, there is a natural inclusion $\mathcal O \hookrightarrow X$.
  For any bundle $E$ on $\mathcal O$,   pointwise conjugate classes $[g]$    glue to a smooth section  $g\in \Gamma(\mathcal O,\Aut(E))$, which is called a cyclic structure in \cite{crainic}.

Integration on orbifolds are defined by using a partition of unity and integration in local charts.  For an orbifold chart $(\tilde U,G,U,\pi)$, let $m$ denote  the order of the subgroup of $G$ of elements  which acts trivially on $\tilde U$, which is called the multiplicity of the chart. An orbifold chart of $\wedge X$ may have a multiplicity $m>1$, since the action of $C(g)$ may not be effective. 
We require that  compatible charts have the same multiplicity. Then on an orbifold $X$ the multiplicity  $m$ is a locally constant function $m:X\to \Z_+$.  
\begin{defn}
Let $\alpha$ be  a compactly supported differential form on an orbifold $X$. Take a partition of unity $\{\rho_i(x)\}_{i\in I}$ such that $\supp(\rho_i)\subset U_i$.  Lift $\rho_i(x)$ and $\alpha$ to the  chart $(\tilde U_i,G_i,U_i,\pi_i)$, we get a $G_i$-invariant function $\rho_i(\tilde x)$ and a top form $\tilde\alpha_i$ on $\tilde U_i$. Denote the multiplicity of the chart by by $m_i$. Define
$$ 
\int_X \alpha:=\sum_{i\in I} \frac{m_i}{|  G_i|} \int_{\tilde U_i} \rho_i(\tilde x) \tilde\alpha_i.
$$
\end{defn}

\section{Feynman diagrams and  proof of Theorem  \ref{ONELOOP}}\label{Feynman-diagram}

By the standard technique of Feynman diagrams (see e.g. \cite{bessis1980quantum}),
$
\widehat{\mathrm{Tr}}_g(1)
$
can be expressed as 
$$
\widehat{\mathrm{Tr}}_g(1)=u^k \bracket{\sum_{\Gamma} \frac{\hbar^{l(\Gamma)-k(\Gamma)}W_{\Gamma}}{|\text{Aut}(\Gamma)|}}.
$$
Here a diagram $\Gamma$ has loop number  $l(\Gamma)$  and $k(\Gamma)$ connected components.   
The sum of $\Gamma$ is over all diagrams without external edges, possibly disconnected or with self-loops. 

Recall the $\mathfrak{h}-$equivariant projection $\pr$ in Section \ref{sec:Char}. Let
$$\widehat{\gamma}:=\widehat\Theta-\pr\in C^1_{\Lie}(\mathfrak{g},\mathfrak{h};\mathfrak{g}).$$
By Theorem \ref{TRACE},  we can replace $\widehat{\Theta}$ by $\widehat{\gamma}$ in the expression of $\widehat{\Tr}_g$ since $\pr$ lies in $\mathfrak{h}$. 
As in Definition \ref{defn:interaction}, when we insert $\widehat \gamma$ to define $\uTr_g$, it is viewed as a Lie algebra 1-cochain valued in $\gl_r((\W_{2n})^g)$, i.e. an element of $C^1(\g,\mathfrak{h}; \gl_r((\W_{2n})^g)$, using the natural embedding $\g\subset \gl_r((\W_{2n})^g)$.

Thus, vertices of a diagram are given by $\hat \gamma\in C^1(\g,\mathfrak{h}; \gl_r((\W_{2n}^g)))$ which we decompose as  
$$
\hat \gamma=\sum_{i\geq 0}\hat \gamma_i \hbar^i, \quad  \hat\gamma_i=\sum_{m\geq 0} \hat\gamma_i^{(m)}.
$$
$\hat \gamma_i^{(m)}$ collects those terms in $\hat \gamma$ whose value in $\gl_r(\W_{2n})$ is homogeneous of degree $m$ in $y$'s and degree $i$ in $\hbar$. Then in the above Feynman diagram expansion, $d_{2k}(\widehat \gamma_i^{(m)})$ contributes a $m$-valency vertex of loop number $i$ with coefficient valued in $C^1(\g, \mathfrak{h}; \gl_r(\C))$. 

There are three types of edges
\begin{itemize}
    \item A red edge connects a $dy_i$ with a $dy_j$
    \item A blue edge connects a $y_i$ with a $y_j$
    \item A green edge connects  a $z_i$ with a $z_j$. It may appear as  a loop.
\end{itemize}
We associate propagators
$$
\iota_{\Pi_1/u}, \qquad \partial_{\Pi_1}, \qquad \partial_{P_3}
$$
with them, see figure \ref{edge}.  Any vertex $d_{2k}(\widehat \gamma_i^{(m)})$ connects with exactly 1 red edge.

\begin{center}
\begin{figure}
\tikzset{every picture/.style={line width=0.75pt}} 

\begin{tikzpicture}[x=0.75pt,y=0.75pt,yscale=-1,xscale=1]

\draw [color={rgb, 255:red, 255; green, 0; blue, 0 }  ,draw opacity=1 ]   (91.75,170.5) -- (161.75,170.5) ;
\draw  [fill={rgb, 255:red, 0; green, 0; blue, 0 }  ,fill opacity=1 ] (89.5,170.5) .. controls (89.5,169.26) and (90.51,168.25) .. (91.75,168.25) .. controls (92.99,168.25) and (94,169.26) .. (94,170.5) .. controls (94,171.74) and (92.99,172.75) .. (91.75,172.75) .. controls (90.51,172.75) and (89.5,171.74) .. (89.5,170.5) -- cycle ;
\draw  [fill={rgb, 255:red, 0; green, 0; blue, 0 }  ,fill opacity=1 ] (159.5,170.5) .. controls (159.5,169.26) and (160.51,168.25) .. (161.75,168.25) .. controls (162.99,168.25) and (164,169.26) .. (164,170.5) .. controls (164,171.74) and (162.99,172.75) .. (161.75,172.75) .. controls (160.51,172.75) and (159.5,171.74) .. (159.5,170.5) -- cycle ;
\draw [color={rgb, 255:red, 0; green, 0; blue, 255 }  ,draw opacity=1 ]   (231.75,170.5) -- (301.75,170.5) ;
\draw  [fill={rgb, 255:red, 0; green, 0; blue, 0 }  ,fill opacity=1 ] (229.5,170.5) .. controls (229.5,169.26) and (230.51,168.25) .. (231.75,168.25) .. controls (232.99,168.25) and (234,169.26) .. (234,170.5) .. controls (234,171.74) and (232.99,172.75) .. (231.75,172.75) .. controls (230.51,172.75) and (229.5,171.74) .. (229.5,170.5) -- cycle ;
\draw  [fill={rgb, 255:red, 0; green, 0; blue, 0 }  ,fill opacity=1 ] (299.5,170.5) .. controls (299.5,169.26) and (300.51,168.25) .. (301.75,168.25) .. controls (302.99,168.25) and (304,169.26) .. (304,170.5) .. controls (304,171.74) and (302.99,172.75) .. (301.75,172.75) .. controls (300.51,172.75) and (299.5,171.74) .. (299.5,170.5) -- cycle ;
\draw [color={rgb, 255:red, 0; green, 255; blue, 0 }  ,draw opacity=1 ]   (371.75,170.5) -- (441.75,170.5) ;
\draw  [fill={rgb, 255:red, 0; green, 0; blue, 0 }  ,fill opacity=1 ] (369.5,170.5) .. controls (369.5,169.26) and (370.51,168.25) .. (371.75,168.25) .. controls (372.99,168.25) and (374,169.26) .. (374,170.5) .. controls (374,171.74) and (372.99,172.75) .. (371.75,172.75) .. controls (370.51,172.75) and (369.5,171.74) .. (369.5,170.5) -- cycle ;
\draw  [fill={rgb, 255:red, 0; green, 0; blue, 0 }  ,fill opacity=1 ] (439.5,170.5) .. controls (439.5,169.26) and (440.51,168.25) .. (441.75,168.25) .. controls (442.99,168.25) and (444,169.26) .. (444,170.5) .. controls (444,171.74) and (442.99,172.75) .. (441.75,172.75) .. controls (440.51,172.75) and (439.5,171.74) .. (439.5,170.5) -- cycle ;
\draw [color={rgb, 255:red, 0; green, 255; blue, 0 }  ,draw opacity=1 ]   (511.75,170.5) .. controls (490.25,148.5) and (494.81,118.5) .. (511.28,118.56) .. controls (527.75,118.62) and (532.75,146.5) .. (511.75,170.5) -- cycle ;
\draw  [fill={rgb, 255:red, 0; green, 0; blue, 0 }  ,fill opacity=1 ] (509.5,170.5) .. controls (509.5,169.26) and (510.51,168.25) .. (511.75,168.25) .. controls (512.99,168.25) and (514,169.26) .. (514,170.5) .. controls (514,171.74) and (512.99,172.75) .. (511.75,172.75) .. controls (510.51,172.75) and (509.5,171.74) .. (509.5,170.5) -- cycle ;

\draw (80.51,179) node [anchor=north west][inner sep=0.75pt]   [align=left] {$dy^i$};
\draw (151.21,179) node [anchor=north west][inner sep=0.75pt]   [align=left] {$dy^j$};
\draw (226.11,179) node [anchor=north west][inner sep=0.75pt]   [align=left] {$y^i$};
\draw (294.51,179) node [anchor=north west][inner sep=0.75pt]   [align=left] {$y^j$};
\draw (365.71,179) node [anchor=north west][inner sep=0.75pt]   [align=left] {$z^i$};
\draw (436.11,179) node [anchor=north west][inner sep=0.75pt]   [align=left] {$z^j$};
\draw (495.31,179) node [anchor=north west][inner sep=0.75pt]   [align=left] {$z^i$};
\draw (515,179) node [anchor=north west][inner sep=0.75pt]   [align=left] {$z^j$};
\draw (110,140) node [anchor=north west][inner sep=0.75pt]   [align=left] {$\iota_{\Pi_1/u}$};
\draw (256,140) node [anchor=north west][inner sep=0.75pt]   [align=left] {$\partial_{\Pi_1}$};
\draw (396,140) node [anchor=north west][inner sep=0.75pt]   [align=left] {$\partial_{P_3}$};
\draw (502,97) node [anchor=north west][inner sep=0.75pt]   [align=left] {$\partial_{P_3}$};

\end{tikzpicture}
\caption{Three types of edges.}\label{edge}
\end{figure}
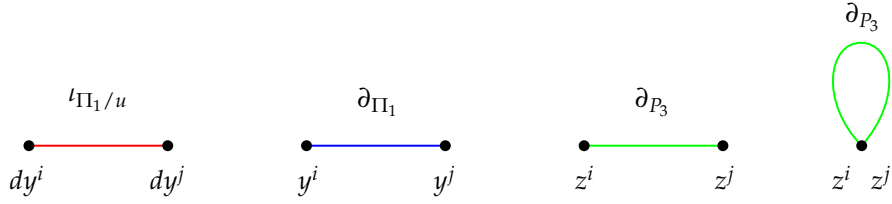
\end{center}

Note that our vertices are matrix valued, therefore it is important to keep their orders. This is a subtle difference with the usual Feynman diagram technique. For each diagram $\Gamma$,  let $V(\Gamma)$ be the set of ordered vertices of $\Gamma$. We use a bijection
 $$
 \chi:\{1,\cdots ,|V(\Gamma)|\}\rightarrow V(\Gamma),
 $$
 to label the order. The Feynman integral $W_{\Gamma}\in C^{|V(\Gamma)|}(\g, \mathfrak{h};\C)$ is defined as follows. Given $\xi_i\in \g/\mathfrak{h}$,

 \begin{align*}
  W_{\Gamma}(\xi_1\wedge\cdots\wedge\xi_{m})
  :=\sum_{\varepsilon\in S_{m}\ \text{and}\ \chi}\text{sign}(\varepsilon)W_{\Gamma^{\chi}}( \xi_{\varepsilon(1)}\otimes\cdots\otimes \xi_{\varepsilon(m)}), \quad m=|V(\Gamma)|.
\end{align*}
Here $\Gamma^{\chi}$ means the graph $\Gamma$ equipped with an ordering map $\chi$, and
$$
W_{\Gamma^{\chi}}(\xi_1\otimes\cdots\otimes \xi_{m}):=$$
$$
\sigma\bracket{\tr\int_{S^1_{cyc}[m+1]}d\theta_0\cdots d\theta_{m}\bracket{\prod_{e\in E(\Gamma)}P_e}(1\otimes d_{2k}{\widehat{\gamma}_{\upsilon(\chi(1))}}(\xi_{1})\otimes\cdots\otimes d_{2k}{\widehat{\gamma}_{\upsilon(\chi(m))}}(\xi_{m}))}.
$$
Here $P_e$ means applying the propagator of the edge $e$ to the two vertices indexed by two ends of $e$. 

The proposition follows by computing the tree diagrams ($g=0$) and 1-loop diagrams ($g=1$).

We first collect some details on the vertices and the curvatures. Recall the curvature
\begin{align*}
R_1&:=[\pr_1(-),\pr_1(-)]-\pr_1[-,-]&&\in \Hom_{\C}(\wedge^2 \g, \sp_{2k})\\
R_2&:=[\pr_2(-),\pr_2(-)]-\pr_2[-,-]&&\in \Hom_{\C}(\wedge^2 \g, \sp_{2n-2k}^g)\\
R_3&:=
-\pr_3[-,-]&&\in \Hom_{\C}(\wedge^2 \g, \gl_r)\\  
R_4&:=-\pr_4[-,-]&&\in \Hom_{\C}(\wedge^2 \g, \C\oplus \oplus_{i>1}\hbar^i \C).
\end{align*}
\begin{align*}
R_{1}((f+\hbar A),(g+\hbar B))&=-\frac{1}{2} 
\omega^{ij}(\partial_{y^i}f(0)\partial_{y^j}\partial_{y^p}\partial_{y^q}g(0)+\partial_{y^i}g(0)\partial_{y^j}\partial_{y^p}\partial_{y^q}f(0))y^py^q\\
R_{2}((f+\hbar A),(g+\hbar B))&=-\frac{1}{2} 
\omega^{ij}(\partial_{y^i}f(0)\partial_{y^j}\partial_{z^p}\partial_{z^q}g(0)+\partial_{y^i}g(0)\partial_{y^j}\partial_{z^p}\partial_{z^q}f(0))z^pz^q\\
R_3((f+\hbar A),(g+\hbar B))&=-\hbar 
\omega^{ij} (\partial_{y^i}f(0) \partial_{y^j}B_1(0)- \partial_{y^i}A_1(0)\partial_{y^j}g(0))\\
R_4((f+\hbar A),(g+\hbar B))&=-
\omega^{ij}\partial_{y^i}f(0)\partial_{y^j} g(0)+O(\hbar^2).
\end{align*}

Let us also write down  a few leading terms in the expansion $d_{2k}\widehat{\gamma}=\sum_{i\geq 0}d_{2k}\widehat{\gamma}_i$:
\begin{align*}
d_{2k}\widehat\gamma_0((f+\hbar A))&
=\partial_{y^i} f(0)dy^i+\frac{1}{2} \partial_{y^i}\partial_{y^p}\partial_{y^q} f(0)y^p y^q dy^i+\frac{1}{2} \partial_{y^i}\partial_{z^p}\partial_{z^q} f(0)z^p z^q dy^i+\mathrm{higher \,\,terms}
\\
d_{2k}\widehat\gamma_1((f+\hbar A))&
=\hbar\partial_{y^i} A_1(0)dy^i+\hbar O(ydy) \\
d_{2k}\widehat{\gamma}_2(f+\hbar A)&=\hbar^2 d_{2k}A_2\\
&\cdots\\
d_{2k}\widehat{\gamma}_l(f+\hbar A)&=\hbar^l d_{2k}A_l,\quad l>0.
\end{align*}


\begin{itemize}
\item Let $\Gamma_1$ be the graph with  two vertices labeled by $d_{2k}(\widehat{\gamma}^{(1)}_0)$ connected by a red propagator.  It is the only connected tree  without external edges. This term contributes $\widehat{\omega}_0/u$. Here $\widehat{\omega}_0 \in C^2(\g, \mathfrak{h};\C)$ is the 2-cocycle
$$
  \widehat{\omega}_0((f+\hbar A),(g+\hbar B))= \omega^{ij}\partial_{y^i}f(0)\partial_{y^j} g(0).
$$
In other words, $\hat \omega_0= -R_4|_{\hbar=0}$. 
  See figure \ref{graph1}.
\begin{center}
\begin{figure}
\tikzset{every picture/.style={line width=0.75pt}} 

\begin{tikzpicture}[x=0.75pt,y=0.75pt,yscale=-1,xscale=1]

\draw [color={rgb, 255:red, 255; green, 0; blue, 0 }  ,draw opacity=1 ]   (146.05,39) -- (216.05,39) ;
\draw  [fill={rgb, 255:red, 0; green, 0; blue, 0 }  ,fill opacity=1 ] (143.8,39) .. controls (143.8,37.76) and (144.81,36.75) .. (146.05,36.75) .. controls (147.29,36.75) and (148.3,37.76) .. (148.3,39) .. controls (148.3,40.24) and (147.29,41.25) .. (146.05,41.25) .. controls (144.81,41.25) and (143.8,40.24) .. (143.8,39) -- cycle ;
\draw  [fill={rgb, 255:red, 0; green, 0; blue, 0 }  ,fill opacity=1 ] (213.8,39) .. controls (213.8,37.76) and (214.81,36.75) .. (216.05,36.75) .. controls (217.29,36.75) and (218.3,37.76) .. (218.3,39) .. controls (218.3,40.24) and (217.29,41.25) .. (216.05,41.25) .. controls (214.81,41.25) and (213.8,40.24) .. (213.8,39) -- cycle ;

\draw (64,31) node [anchor=north west][inner sep=0.75pt]   [align=left] {$\Gamma_1$};
\draw (120,47.5) node [anchor=north west][inner sep=0.75pt]   [align=left] {$d_{2k}(\widehat{\gamma}^{(1)}_0)$};
\draw (190,47.5) node [anchor=north west][inner sep=0.75pt]   [align=left] {$d_{2k}(\widehat{\gamma}^{(1)}_0)$};

\end{tikzpicture}
\caption{The contribution of $\Gamma_1$ is $\widehat{\omega}_0/u$.}\label{graph1}
\end{figure}
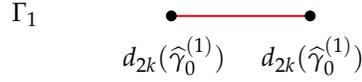
\end{center}

\item Let $\Gamma_2$ be the graph with two vertices labeled by $d_{2k}(\widehat{\gamma}^{(1)}_0)$ and $d_{2k}(\widehat{\gamma}^{(1)}_1)$, connected by a red propagator.   This diagram gives $R_3/u$. See figure \ref{graph2}.
\begin{center}
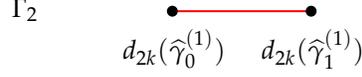
\begin{figure}
\tikzset{every picture/.style={line width=0.75pt}} 

\begin{tikzpicture}[x=0.75pt,y=0.75pt,yscale=-1,xscale=1]

\draw [color={rgb, 255:red, 255; green, 0; blue, 0 }  ,draw opacity=1 ]   (146.55,109) -- (216.55,109) ;
\draw  [fill={rgb, 255:red, 0; green, 0; blue, 0 }  ,fill opacity=1 ] (144.3,109) .. controls (144.3,107.76) and (145.31,106.75) .. (146.55,106.75) .. controls (147.79,106.75) and (148.8,107.76) .. (148.8,109) .. controls (148.8,110.24) and (147.79,111.25) .. (146.55,111.25) .. controls (145.31,111.25) and (144.3,110.24) .. (144.3,109) -- cycle ;
\draw  [fill={rgb, 255:red, 0; green, 0; blue, 0 }  ,fill opacity=1 ] (214.3,109) .. controls (214.3,107.76) and (215.31,106.75) .. (216.55,106.75) .. controls (217.79,106.75) and (218.8,107.76) .. (218.8,109) .. controls (218.8,110.24) and (217.79,111.25) .. (216.55,111.25) .. controls (215.31,111.25) and (214.3,110.24) .. (214.3,109) -- cycle ;

\draw (64,101.07) node [anchor=north west][inner sep=0.75pt]   [align=left] {$\Gamma_2$};
\draw (120,116.5) node [anchor=north west][inner sep=0.75pt]   [align=left] {$d_{2k}(\widehat{\gamma}^{(1)}_0)$};
\draw (190,116.5) node [anchor=north west][inner sep=0.75pt]   [align=left] {$d_{2k}(\widehat{\gamma}^{(1)}_1)$};

\end{tikzpicture}
\caption{The diagram $\Gamma_2$ gives $R_3/u$.}\label{graph2}
\end{figure}
\end{center}

Let $\mathfrak{G}_2$ be the set of graphs such that each element is the disjoint of some $\Gamma_2$'s. The contribution of $\mathfrak{G}_2$  is $\tr(g\cdot e^{R_3/u})$. See figure \ref{graph2+}.
\begin{center}
\begin{figure}

\tikzset{every picture/.style={line width=0.75pt}} 

\begin{tikzpicture}[x=0.75pt,y=0.75pt,yscale=-1,xscale=1]

\draw [color={rgb, 255:red, 255; green, 0; blue, 0 }  ,draw opacity=1 ]   (184.19,70.05) -- (209.63,76.94) ;
\draw  [fill={rgb, 255:red, 0; green, 0; blue, 0 }  ,fill opacity=1 ] (182.77,67.61) .. controls (184.11,66.82) and (185.82,67.28) .. (186.6,68.62) .. controls (187.39,69.97) and (186.94,71.7) .. (185.61,72.49) .. controls (184.27,73.28) and (182.55,72.83) .. (181.77,71.48) .. controls (180.99,70.14) and (181.44,68.4) .. (182.77,67.61) -- cycle ;
\draw  [fill={rgb, 255:red, 0; green, 0; blue, 0 }  ,fill opacity=1 ] (157.33,74.5) .. controls (158.66,73.71) and (160.38,74.16) .. (161.16,75.5) .. controls (161.94,76.85) and (161.5,78.59) .. (160.16,79.38) .. controls (158.83,80.17) and (157.11,79.71) .. (156.33,78.37) .. controls (155.55,77.02) and (155.99,75.29) .. (157.33,74.5) -- cycle ;
\draw  [color={rgb, 255:red, 0; green, 0; blue, 0 }  ,draw opacity=0.11 ] (133.3,121.42) .. controls (133.3,93.21) and (155.96,70.33) .. (183.91,70.33) .. controls (211.86,70.33) and (234.52,93.21) .. (234.52,121.42) .. controls (234.52,149.64) and (211.86,172.52) .. (183.91,172.52) .. controls (155.96,172.52) and (133.3,149.64) .. (133.3,121.42) -- cycle ;
\draw  [fill={rgb, 255:red, 0; green, 0; blue, 0 }  ,fill opacity=1 ] (137.49,94.76) .. controls (138.03,93.29) and (139.64,92.54) .. (141.09,93.09) .. controls (142.54,93.63) and (143.28,95.25) .. (142.75,96.72) .. controls (142.21,98.18) and (140.6,98.93) .. (139.15,98.39) .. controls (137.7,97.85) and (136.96,96.22) .. (137.49,94.76) -- cycle ;
\draw  [fill={rgb, 255:red, 0; green, 0; blue, 0 }  ,fill opacity=1 ] (130.67,120.44) .. controls (131.21,118.98) and (132.82,118.23) .. (134.27,118.77) .. controls (135.72,119.31) and (136.47,120.94) .. (135.93,122.4) .. controls (135.39,123.87) and (133.78,124.62) .. (132.33,124.08) .. controls (130.88,123.53) and (130.14,121.91) .. (130.67,120.44) -- cycle ;
\draw  [fill={rgb, 255:red, 0; green, 0; blue, 0 }  ,fill opacity=1 ] (137.49,146.13) .. controls (138.03,144.66) and (139.64,143.91) .. (141.09,144.46) .. controls (142.54,145) and (143.28,146.62) .. (142.75,148.09) .. controls (142.21,149.56) and (140.6,150.3) .. (139.15,149.76) .. controls (137.7,149.22) and (136.96,147.59) .. (137.49,146.13) -- cycle ;
\draw  [fill={rgb, 255:red, 0; green, 0; blue, 0 }  ,fill opacity=1 ] (156.12,164.93) .. controls (156.65,163.47) and (158.27,162.72) .. (159.72,163.26) .. controls (161.17,163.8) and (161.91,165.43) .. (161.37,166.89) .. controls (160.84,168.36) and (159.23,169.11) .. (157.77,168.56) .. controls (156.32,168.02) and (155.58,166.4) .. (156.12,164.93) -- cycle ;
\draw  [fill={rgb, 255:red, 0; green, 0; blue, 0 }  ,fill opacity=1 ] (181.56,171.26) .. controls (182.1,169.79) and (183.71,169.04) .. (185.16,169.58) .. controls (186.61,170.13) and (187.35,171.75) .. (186.82,173.22) .. controls (186.28,174.68) and (184.67,175.43) .. (183.22,174.89) .. controls (181.77,174.35) and (181.02,172.72) .. (181.56,171.26) -- cycle ;
\draw  [fill={rgb, 255:red, 0; green, 0; blue, 0 }  ,fill opacity=1 ] (207,164.93) .. controls (207.54,163.47) and (209.15,162.72) .. (210.6,163.26) .. controls (212.05,163.8) and (212.8,165.43) .. (212.26,166.89) .. controls (211.72,168.36) and (210.11,169.11) .. (208.66,168.56) .. controls (207.21,168.02) and (206.47,166.4) .. (207,164.93) -- cycle ;
\draw  [fill={rgb, 255:red, 0; green, 0; blue, 0 }  ,fill opacity=1 ] (225.63,146.13) .. controls (226.17,144.66) and (227.78,143.91) .. (229.23,144.46) .. controls (230.68,145) and (231.42,146.62) .. (230.88,148.09) .. controls (230.35,149.56) and (228.74,150.3) .. (227.29,149.76) .. controls (225.83,149.22) and (225.09,147.59) .. (225.63,146.13) -- cycle ;
\draw  [fill={rgb, 255:red, 0; green, 0; blue, 0 }  ,fill opacity=1 ] (207,75.95) .. controls (207.54,74.49) and (209.15,73.74) .. (210.6,74.28) .. controls (212.05,74.82) and (212.8,76.45) .. (212.26,77.92) .. controls (211.72,79.38) and (210.11,80.13) .. (208.66,79.59) .. controls (207.21,79.05) and (206.47,77.42) .. (207,75.95) -- cycle ;
\draw [color={rgb, 255:red, 255; green, 0; blue, 0 }  ,draw opacity=1 ]   (158.75,76.94) -- (140.12,95.74) ;
\draw [color={rgb, 255:red, 255; green, 0; blue, 0 }  ,draw opacity=1 ]   (140.12,147.11) -- (133.3,121.42) ;
\draw [color={rgb, 255:red, 255; green, 0; blue, 0 }  ,draw opacity=1 ]   (184.19,172.24) -- (158.75,165.91) ;
\draw [color={rgb, 255:red, 255; green, 0; blue, 0 }  ,draw opacity=1 ]   (228.26,147.11) -- (209.63,165.91) ;
\draw  [fill={rgb, 255:red, 0; green, 0; blue, 0 }  ,fill opacity=1 ] (156.12,75.95) .. controls (156.65,74.49) and (158.27,73.74) .. (159.72,74.28) .. controls (161.17,74.82) and (161.91,76.45) .. (161.37,77.92) .. controls (160.84,79.38) and (159.23,80.13) .. (157.77,79.59) .. controls (156.32,79.05) and (155.58,77.42) .. (156.12,75.95) -- cycle ;
\draw  [fill={rgb, 255:red, 0; green, 0; blue, 0 }  ,fill opacity=1 ] (137.49,94.76) .. controls (138.03,93.29) and (139.64,92.54) .. (141.09,93.09) .. controls (142.54,93.63) and (143.28,95.25) .. (142.75,96.72) .. controls (142.21,98.18) and (140.6,98.93) .. (139.15,98.39) .. controls (137.7,97.85) and (136.96,96.22) .. (137.49,94.76) -- cycle ;
\draw  [fill={rgb, 255:red, 0; green, 0; blue, 0 }  ,fill opacity=1 ] (130.67,120.44) .. controls (131.21,118.98) and (132.82,118.23) .. (134.27,118.77) .. controls (135.72,119.31) and (136.47,120.94) .. (135.93,122.4) .. controls (135.39,123.87) and (133.78,124.62) .. (132.33,124.08) .. controls (130.88,123.53) and (130.14,121.91) .. (130.67,120.44) -- cycle ;
\draw  [fill={rgb, 255:red, 0; green, 0; blue, 0 }  ,fill opacity=1 ] (137.49,146.13) .. controls (138.03,144.66) and (139.64,143.91) .. (141.09,144.46) .. controls (142.54,145) and (143.28,146.62) .. (142.75,148.09) .. controls (142.21,149.56) and (140.6,150.3) .. (139.15,149.76) .. controls (137.7,149.22) and (136.96,147.59) .. (137.49,146.13) -- cycle ;
\draw  [fill={rgb, 255:red, 0; green, 0; blue, 0 }  ,fill opacity=1 ] (156.12,164.93) .. controls (156.65,163.47) and (158.27,162.72) .. (159.72,163.26) .. controls (161.17,163.8) and (161.91,165.43) .. (161.37,166.89) .. controls (160.84,168.36) and (159.23,169.11) .. (157.77,168.56) .. controls (156.32,168.02) and (155.58,166.4) .. (156.12,164.93) -- cycle ;
\draw  [fill={rgb, 255:red, 0; green, 0; blue, 0 }  ,fill opacity=1 ] (181.56,171.26) .. controls (182.1,169.79) and (183.71,169.04) .. (185.16,169.58) .. controls (186.61,170.13) and (187.35,171.75) .. (186.82,173.22) .. controls (186.28,174.68) and (184.67,175.43) .. (183.22,174.89) .. controls (181.77,174.35) and (181.02,172.72) .. (181.56,171.26) -- cycle ;
\draw  [fill={rgb, 255:red, 0; green, 0; blue, 0 }  ,fill opacity=1 ] (207,164.93) .. controls (207.54,163.47) and (209.15,162.72) .. (210.6,163.26) .. controls (212.05,163.8) and (212.8,165.43) .. (212.26,166.89) .. controls (211.72,168.36) and (210.11,169.11) .. (208.66,168.56) .. controls (207.21,168.02) and (206.47,166.4) .. (207,164.93) -- cycle ;
\draw  [fill={rgb, 255:red, 0; green, 0; blue, 0 }  ,fill opacity=1 ] (225.63,146.13) .. controls (226.17,144.66) and (227.78,143.91) .. (229.23,144.46) .. controls (230.68,145) and (231.42,146.62) .. (230.88,148.09) .. controls (230.35,149.56) and (228.74,150.3) .. (227.29,149.76) .. controls (225.83,149.22) and (225.09,147.59) .. (225.63,146.13) -- cycle ;
\draw   (213.02,67.97) .. controls (214.11,64.26) and (212.81,61.86) .. (209.1,60.76) -- (209.1,60.76) .. controls (203.81,59.19) and (201.71,56.56) .. (202.81,52.85) .. controls (201.71,56.56) and (198.51,57.63) .. (193.22,56.06)(195.6,56.76) -- (193.22,56.06) .. controls (189.51,54.96) and (187.11,56.26) .. (186.02,59.97) ;

\draw (64.66,111.07) node [anchor=north west][inner sep=0.75pt]   [align=left] {$\mathfrak{G}_2$};
\draw (238,108) node [anchor=north west][inner sep=0.75pt]   [align=left] {$\cdots$};
\draw (197.5,37.5) node [anchor=north west][inner sep=0.75pt]  [font=\small] [align=left] {$R_3/u$};
\draw (164,75) node [anchor=north west][inner sep=0.75pt]  [font=\tiny] [align=left] {$d_{2k}(\widehat{\gamma}^{(1)}_0)$};
\draw (200,81) node [anchor=north west][inner sep=0.75pt]  [font=\tiny] [align=left] {$d_{2k}(\widehat{\gamma}^{(1)}_1)$};

\end{tikzpicture}
\caption{The contribution of $\mathfrak{G}_2$ is $\tr(g\cdot e^{R_3/u})$.}\label{graph2+}
\end{figure}
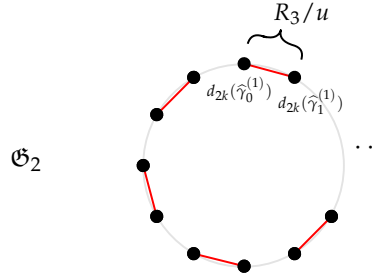
\end{center}

\item Let $\mathfrak{G}_3$ be the set of graphs with  $k$ vertices of valency 3 that are connected by blue propagators to form a wheel, and they are further connected by red propagators to $k$ vertices of valency 1. The 3-valency vertex on the wheel is represented by $d_{2k}(\widehat{\gamma}^{(3)}_0)$ and the 1-valency vertex is represented by $d_{2k}(\widehat{\gamma}^{(1)}_0)$.

 The  sum of these connected diagrams    contributes
$$
=\sum_{k\geq 0}C(k)\cdot\frac{1}{k!}\sum'_{\epsilon}\text{sign}(\epsilon)F_k(R_{1}(\xi_{\epsilon(1)},\xi_{\epsilon(2)}),\cdots ,R_{1}(\xi_{\epsilon(2k-1)},\xi_{\epsilon(2k)}))$$
where $F_k=k!\text{ch}_k=\text{tr}(X^k)\in (\Sym^k\mathfrak{sp}_{2n}^{\vee})^{\mathfrak{sp}_{2n}},$ and the prime means sum is over $\epsilon\in S_{2k}$ such that $\epsilon(2i-1)<\epsilon(2i)$. The coefficient $C(k)$ of this factor can be compute by integrals over $S^1[k]$
$$C(k)=\frac{u^{-k}}{k}\int_{S^1[k]}\pi^*_{12}(P^{S^1})\pi^*_{23}(P^{S^1})\cdot\cdot\cdot \pi^*_{k1}.(P^{S^1})d\theta^1\wedge\cdots \wedge d\theta^k$$
When k is odd it is 0 and when k is even it is $\frac{u^{-k}}{k}\cdot\frac{2\zeta(k)}{{(2\pi i)}^k}$. Here $\zeta(k)$ is Riemann's zeta function. The total contribution from $\mathfrak{G}_3$ is

$$\sum_{k\geq 0}\frac{u^{-2k}}{2k}\frac{2\zeta(2k)}{(2\pi i)^{2k}}(2k)!ch_{2k}=\sum_{k\geq 0}\frac{2(2k-1)!\zeta(2k)}{(2\pi i)^{2k}}u^{-2k}ch_{2k}.$$
This is precisely $\log \hat{A}(\mathfrak{sp}_{2n})_u$ \cite{Hirzebruch1994Manifolds}. See figure \ref{graph3}.
\begin{center}
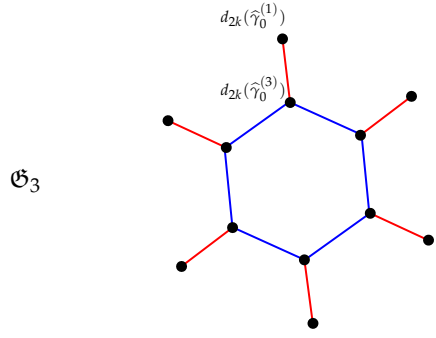
\begin{figure}

\tikzset{every picture/.style={line width=0.75pt}} 

\begin{tikzpicture}[x=0.75pt,y=0.75pt,yscale=-1,xscale=1]

\draw  [color={rgb, 255:red, 0; green, 0; blue, 255 }  ,draw opacity=1 ] (247.56,290.31) -- (214.99,313.65) -- (178.5,297.12) -- (174.57,257.25) -- (207.13,233.91) -- (243.63,250.44) -- cycle ;
\draw [color={rgb, 255:red, 255; green, 0; blue, 0 }  ,draw opacity=1 ]   (203.76,202.09) -- (207.85,236.44) ;
\draw  [fill={rgb, 255:red, 0; green, 0; blue, 0 }  ,fill opacity=1 ] (203.58,199.84) .. controls (204.81,199.74) and (205.9,200.66) .. (206,201.9) .. controls (206.11,203.14) and (205.19,204.22) .. (203.95,204.33) .. controls (202.71,204.43) and (201.62,203.51) .. (201.52,202.27) .. controls (201.42,201.03) and (202.34,199.95) .. (203.58,199.84) -- cycle ;
\draw  [fill={rgb, 255:red, 0; green, 0; blue, 0 }  ,fill opacity=1 ] (207.48,231.96) .. controls (208.71,231.85) and (209.8,232.77) .. (209.9,234.01) .. controls (210.01,235.25) and (209.09,236.34) .. (207.85,236.44) .. controls (206.61,236.54) and (205.52,235.62) .. (205.42,234.38) .. controls (205.32,233.15) and (206.24,232.06) .. (207.48,231.96) -- cycle ;
\draw [color={rgb, 255:red, 255; green, 0; blue, 0 }  ,draw opacity=1 ]   (268.82,231.08) -- (241.26,252) ;
\draw  [fill={rgb, 255:red, 0; green, 0; blue, 0 }  ,fill opacity=1 ] (270.66,229.79) .. controls (271.38,230.8) and (271.13,232.21) .. (270.12,232.92) .. controls (269.1,233.64) and (267.7,233.39) .. (266.98,232.38) .. controls (266.27,231.36) and (266.51,229.96) .. (267.53,229.24) .. controls (268.54,228.53) and (269.95,228.77) .. (270.66,229.79) -- cycle ;
\draw  [fill={rgb, 255:red, 0; green, 0; blue, 0 }  ,fill opacity=1 ] (244.94,249.4) .. controls (245.66,250.42) and (245.41,251.82) .. (244.4,252.54) .. controls (243.38,253.26) and (241.98,253.01) .. (241.26,252) .. controls (240.55,250.98) and (240.79,249.58) .. (241.81,248.86) .. controls (242.82,248.15) and (244.23,248.39) .. (244.94,249.4) -- cycle ;
\draw [color={rgb, 255:red, 255; green, 0; blue, 0 }  ,draw opacity=1 ]   (178.57,297.33) -- (151.01,318.25) ;
\draw  [fill={rgb, 255:red, 0; green, 0; blue, 0 }  ,fill opacity=1 ] (180.41,296.04) .. controls (181.13,297.05) and (180.88,298.46) .. (179.87,299.17) .. controls (178.85,299.89) and (177.45,299.64) .. (176.73,298.63) .. controls (176.02,297.61) and (176.26,296.21) .. (177.28,295.49) .. controls (178.29,294.78) and (179.7,295.02) .. (180.41,296.04) -- cycle ;
\draw  [fill={rgb, 255:red, 0; green, 0; blue, 0 }  ,fill opacity=1 ] (154.69,315.65) .. controls (155.41,316.67) and (155.16,318.07) .. (154.15,318.79) .. controls (153.13,319.51) and (151.73,319.26) .. (151.01,318.25) .. controls (150.3,317.23) and (150.54,315.83) .. (151.56,315.11) .. controls (152.57,314.4) and (153.98,314.64) .. (154.69,315.65) -- cycle ;
\draw [color={rgb, 255:red, 255; green, 0; blue, 0 }  ,draw opacity=1 ]   (214.8,313.61) -- (219.34,347.91) ;
\draw  [fill={rgb, 255:red, 0; green, 0; blue, 0 }  ,fill opacity=1 ] (214.58,311.37) .. controls (215.82,311.25) and (216.92,312.16) .. (217.04,313.4) .. controls (217.16,314.63) and (216.25,315.73) .. (215.02,315.85) .. controls (213.78,315.97) and (212.68,315.07) .. (212.56,313.83) .. controls (212.44,312.59) and (213.35,311.49) .. (214.58,311.37) -- cycle ;
\draw  [fill={rgb, 255:red, 0; green, 0; blue, 0 }  ,fill opacity=1 ] (218.91,343.43) .. controls (220.15,343.31) and (221.25,344.22) .. (221.37,345.45) .. controls (221.48,346.69) and (220.58,347.79) .. (219.34,347.91) .. controls (218.11,348.03) and (217.01,347.12) .. (216.89,345.89) .. controls (216.77,344.65) and (217.67,343.55) .. (218.91,343.43) -- cycle ;
\draw [color={rgb, 255:red, 255; green, 0; blue, 0 }  ,draw opacity=1 ]   (248.02,290.12) -- (279.41,304.68) ;
\draw  [fill={rgb, 255:red, 0; green, 0; blue, 0 }  ,fill opacity=1 ] (246.02,289.1) .. controls (246.58,287.99) and (247.93,287.55) .. (249.04,288.11) .. controls (250.15,288.68) and (250.59,290.03) .. (250.03,291.14) .. controls (249.47,292.25) and (248.11,292.69) .. (247.01,292.13) .. controls (245.9,291.56) and (245.45,290.21) .. (246.02,289.1) -- cycle ;
\draw  [fill={rgb, 255:red, 0; green, 0; blue, 0 }  ,fill opacity=1 ] (275.39,302.64) .. controls (275.96,301.53) and (277.31,301.09) .. (278.42,301.65) .. controls (279.53,302.22) and (279.97,303.57) .. (279.41,304.68) .. controls (278.85,305.79) and (277.49,306.23) .. (276.38,305.67) .. controls (275.28,305.1) and (274.83,303.75) .. (275.39,302.64) -- cycle ;
\draw [color={rgb, 255:red, 255; green, 0; blue, 0 }  ,draw opacity=1 ]   (146.02,243.37) -- (177.41,257.93) ;
\draw  [fill={rgb, 255:red, 0; green, 0; blue, 0 }  ,fill opacity=1 ] (144.02,242.35) .. controls (144.58,241.24) and (145.93,240.8) .. (147.04,241.36) .. controls (148.15,241.93) and (148.59,243.28) .. (148.03,244.39) .. controls (147.47,245.5) and (146.11,245.94) .. (145.01,245.38) .. controls (143.9,244.81) and (143.45,243.46) .. (144.02,242.35) -- cycle ;
\draw  [fill={rgb, 255:red, 0; green, 0; blue, 0 }  ,fill opacity=1 ] (173.39,255.89) .. controls (173.96,254.78) and (175.31,254.34) .. (176.42,254.9) .. controls (177.53,255.47) and (177.97,256.82) .. (177.41,257.93) .. controls (176.85,259.04) and (175.49,259.48) .. (174.38,258.92) .. controls (173.28,258.35) and (172.83,257) .. (173.39,255.89) -- cycle ;

\draw (64.5,266.32) node [anchor=north west][inner sep=0.75pt]   [align=left] {$\mathfrak{G}_3$};
\draw (171,182.5) node [anchor=north west][inner sep=0.75pt] [font=\tiny]  [align=left] {$d_{2k}(\widehat{\gamma}^{(1)}_0)$};
\draw (171,219) node [anchor=north west][inner sep=0.75pt] [font=\tiny]  [align=left] {$d_{2k}(\widehat{\gamma}^{(3)}_0)$};

\end{tikzpicture}
\caption{The contribution of $\mathfrak{G}_3$ is $\log \hat{A}(\mathfrak{sp}_{2n})_u$.}\label{graph3}
\end{figure}
\end{center}

\item Let $\mathfrak{G}_4$ be the  set of graphs with  $k$ vertices of valency 3 that are connected by green  propagators to form a wheel, and they are further connected by red propagators to $k$ vertices of valency 1. The 3-valency vertex on the wheel is represented by $d_{2k}(\widehat{\gamma}^{(3)}_0)$ and the 1-valency vertex is represented by $d_{2k}(\widehat{\gamma}^{(1)}_0)$.

Let $\Gamma_4$ be the graph with a vertex  of valency $3$ labeled by $d_{2k}(\widehat{\gamma}^{(3)}_0)$, and a vertex of valency $1$  labeled by $d_{2k}(\widehat{\gamma}^{(1)}_0)$. The two vertexes are connected by a red propagator, and there is a green propagator connecting the $3$-valent vertex with itself.  

The contribution of $\mathfrak{G}_4$ and $\Gamma_4$ is $\log(\Ch_g^\star(\sp_{2n-2k}^g))$. See figure \ref{graph4}.
\begin{center}
\begin{figure}

\tikzset{every picture/.style={line width=0.75pt}} 

\begin{tikzpicture}[x=0.75pt,y=0.75pt,yscale=-1,xscale=1]

\draw [color={rgb, 255:red, 0; green, 255; blue, 0 }  ,draw opacity=1 ]   (209.69,406.83) -- (258.19,435.83) ;
\draw [color={rgb, 255:red, 0; green, 255; blue, 0 }  ,draw opacity=1 ]   (194.69,490.83) -- (246.69,483.33) ;
\draw [color={rgb, 255:red, 0; green, 255; blue, 0 }  ,draw opacity=1 ]   (209.69,406.83) -- (174.88,441.87) ;
\draw [color={rgb, 255:red, 0; green, 255; blue, 0 }  ,draw opacity=1 ]   (174.88,441.87) -- (184.39,465.37) -- (194.69,490.83) ;
\draw [color={rgb, 255:red, 0; green, 255; blue, 0 }  ,draw opacity=1 ]   (258.19,435.83) -- (246.69,483.33) ;
\draw [color={rgb, 255:red, 255; green, 0; blue, 0 }  ,draw opacity=1 ]   (258.13,435.45) -- (287.97,430.21) ;
\draw  [fill={rgb, 255:red, 0; green, 0; blue, 0 }  ,fill opacity=1 ] (255.98,434.78) .. controls (256.35,433.59) and (257.62,432.93) .. (258.8,433.31) .. controls (259.99,433.68) and (260.65,434.94) .. (260.27,436.13) .. controls (259.9,437.31) and (258.64,437.97) .. (257.45,437.6) .. controls (256.27,437.23) and (255.61,435.96) .. (255.98,434.78) -- cycle ;
\draw  [fill={rgb, 255:red, 0; green, 0; blue, 0 }  ,fill opacity=1 ] (285.83,429.54) .. controls (286.2,428.35) and (287.46,427.69) .. (288.65,428.07) .. controls (289.83,428.44) and (290.49,429.7) .. (290.12,430.89) .. controls (289.75,432.07) and (288.48,432.73) .. (287.3,432.36) .. controls (286.11,431.99) and (285.45,430.72) .. (285.83,429.54) -- cycle ;
\draw [color={rgb, 255:red, 255; green, 0; blue, 0 }  ,draw opacity=1 ]   (246.46,483.31) -- (266.14,506.35) ;
\draw  [fill={rgb, 255:red, 0; green, 0; blue, 0 }  ,fill opacity=1 ] (245.95,481.12) .. controls (247.16,480.84) and (248.37,481.6) .. (248.65,482.81) .. controls (248.93,484.02) and (248.18,485.23) .. (246.97,485.51) .. controls (245.76,485.79) and (244.55,485.03) .. (244.27,483.82) .. controls (243.98,482.61) and (244.74,481.4) .. (245.95,481.12) -- cycle ;
\draw  [fill={rgb, 255:red, 0; green, 0; blue, 0 }  ,fill opacity=1 ] (265.63,504.16) .. controls (266.84,503.88) and (268.05,504.63) .. (268.33,505.84) .. controls (268.62,507.05) and (267.86,508.26) .. (266.65,508.54) .. controls (265.44,508.82) and (264.23,508.07) .. (263.95,506.86) .. controls (263.67,505.65) and (264.42,504.44) .. (265.63,504.16) -- cycle ;
\draw [color={rgb, 255:red, 255; green, 0; blue, 0 }  ,draw opacity=1 ]   (145.46,434.75) -- (174.97,441.66) ;
\draw  [fill={rgb, 255:red, 0; green, 0; blue, 0 }  ,fill opacity=1 ] (143.76,433.28) .. controls (144.57,432.34) and (145.99,432.23) .. (146.93,433.04) .. controls (147.87,433.85) and (147.98,435.27) .. (147.17,436.21) .. controls (146.36,437.16) and (144.94,437.27) .. (144,436.46) .. controls (143.06,435.65) and (142.95,434.23) .. (143.76,433.28) -- cycle ;
\draw  [fill={rgb, 255:red, 0; green, 0; blue, 0 }  ,fill opacity=1 ] (173.42,440.15) .. controls (174.37,439.35) and (175.79,439.47) .. (176.6,440.42) .. controls (177.4,441.37) and (177.28,442.79) .. (176.33,443.59) .. controls (175.38,444.39) and (173.96,444.27) .. (173.15,443.32) .. controls (172.35,442.37) and (172.47,440.95) .. (173.42,440.15) -- cycle ;
\draw [color={rgb, 255:red, 255; green, 0; blue, 0 }  ,draw opacity=1 ]   (195.15,490.44) -- (179.45,516.36) ;
\draw  [fill={rgb, 255:red, 0; green, 0; blue, 0 }  ,fill opacity=1 ] (197.07,489.26) .. controls (197.71,490.33) and (197.38,491.71) .. (196.32,492.36) .. controls (195.26,493) and (193.87,492.67) .. (193.22,491.61) .. controls (192.58,490.55) and (192.91,489.16) .. (193.97,488.51) .. controls (195.03,487.87) and (196.42,488.2) .. (197.07,489.26) -- cycle ;
\draw  [fill={rgb, 255:red, 0; green, 0; blue, 0 }  ,fill opacity=1 ] (181.38,515.19) .. controls (182.02,516.25) and (181.69,517.63) .. (180.63,518.28) .. controls (179.57,518.93) and (178.18,518.59) .. (177.53,517.53) .. controls (176.89,516.47) and (177.22,515.09) .. (178.28,514.44) .. controls (179.34,513.79) and (180.73,514.13) .. (181.38,515.19) -- cycle ;
\draw [color={rgb, 255:red, 255; green, 0; blue, 0 }  ,draw opacity=1 ]   (206.94,377.09) -- (210.16,407.22) ;
\draw  [fill={rgb, 255:red, 0; green, 0; blue, 0 }  ,fill opacity=1 ] (207.76,374.99) .. controls (208.92,375.45) and (209.49,376.75) .. (209.04,377.91) .. controls (208.58,379.07) and (207.28,379.64) .. (206.12,379.19) .. controls (204.96,378.73) and (204.39,377.43) .. (204.85,376.27) .. controls (205.3,375.11) and (206.6,374.54) .. (207.76,374.99) -- cycle ;
\draw  [fill={rgb, 255:red, 0; green, 0; blue, 0 }  ,fill opacity=1 ] (210.98,405.12) .. controls (212.14,405.58) and (212.71,406.88) .. (212.25,408.04) .. controls (211.8,409.2) and (210.5,409.77) .. (209.34,409.32) .. controls (208.18,408.86) and (207.61,407.56) .. (208.06,406.4) .. controls (208.52,405.24) and (209.82,404.67) .. (210.98,405.12) -- cycle ;
\draw [color={rgb, 255:red, 0; green, 255; blue, 0 }  ,draw opacity=1 ]   (214.91,554.15) .. controls (237.36,533.12) and (260.82,538.15) .. (260.41,554.62) .. controls (259.99,571.08) and (238.45,575.65) .. (214.91,554.15) -- cycle ;
\draw [color={rgb, 255:red, 255; green, 0; blue, 0 }  ,draw opacity=1 ]   (184.61,554.09) -- (214.91,554.15) ;
\draw  [fill={rgb, 255:red, 0; green, 0; blue, 0 }  ,fill opacity=1 ] (182.61,553.05) .. controls (183.19,551.95) and (184.54,551.52) .. (185.65,552.1) .. controls (186.75,552.67) and (187.17,554.03) .. (186.6,555.13) .. controls (186.03,556.24) and (184.67,556.66) .. (183.56,556.09) .. controls (182.46,555.51) and (182.04,554.15) .. (182.61,553.05) -- cycle ;
\draw  [fill={rgb, 255:red, 0; green, 0; blue, 0 }  ,fill opacity=1 ] (212.91,553.11) .. controls (213.49,552.01) and (214.85,551.58) .. (215.95,552.15) .. controls (217.05,552.73) and (217.48,554.09) .. (216.9,555.19) .. controls (216.33,556.29) and (214.97,556.72) .. (213.87,556.14) .. controls (212.77,555.57) and (212.34,554.21) .. (212.91,553.11) -- cycle ;

\draw (67.46,439.87) node [anchor=north west][inner sep=0.75pt]   [align=left] {$\mathfrak{G}_4$};
\draw (67.46,548.72) node [anchor=north west][inner sep=0.75pt]   [align=left] {$\Gamma_4$};
\draw (174,360) node [anchor=north west][inner sep=0.75pt] [font=\tiny]  [align=left] {$d_{2k}(\widehat{\gamma}^{(1)}_0)$};
\draw (172,396) node [anchor=north west][inner sep=0.75pt] [font=\tiny]  [align=left] {$d_{2k}(\widehat{\gamma}^{(3)}_0)$};
\draw (164,562) node [anchor=north west][inner sep=0.75pt] [font=\tiny]  [align=left] {$d_{2k}(\widehat{\gamma}^{(1)}_0)$};
\draw (201.81,562) node [anchor=north west][inner sep=0.75pt] [font=\tiny]  [align=left] {$d_{2k}(\widehat{\gamma}^{(3)}_0)$};

\end{tikzpicture}

\caption{The contribution of $\mathfrak{G}_4$ and $\Gamma_4$ is $\log(\Ch_g^\star(\sp_{2n-2k}^g))$. }\label{graph4}
\end{figure}
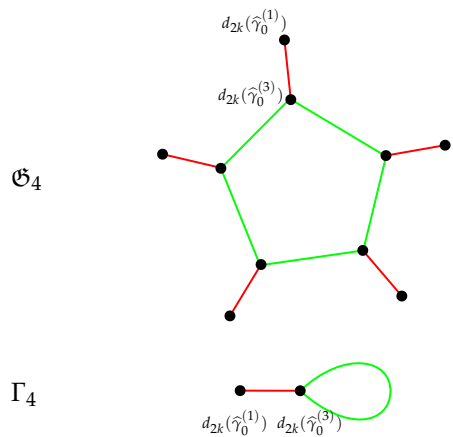
\end{center}

\item Let $\mathfrak{G}_5$ be the  set of wheels with both blue   and  green edges. Since
$$
\int_{S_1} \bracket{u-\frac{1}{2}}du =0, 
$$
the contribution of graphs in $\mathfrak{G}_5$ vanishes. See figure \ref{graph5}.
\begin{center}
\begin{figure}

\tikzset{every picture/.style={line width=0.75pt}} 

\begin{tikzpicture}[x=0.75pt,y=0.75pt,yscale=-1,xscale=1]

\draw [color={rgb, 255:red, 255; green, 0; blue, 0 }  ,draw opacity=1 ]   (204.76,640.29) -- (208.85,674.64) ;
\draw  [fill={rgb, 255:red, 0; green, 0; blue, 0 }  ,fill opacity=1 ] (204.58,638.04) .. controls (205.81,637.94) and (206.9,638.86) .. (207,640.1) .. controls (207.11,641.34) and (206.19,642.42) .. (204.95,642.53) .. controls (203.71,642.63) and (202.62,641.71) .. (202.52,640.47) .. controls (202.42,639.23) and (203.34,638.15) .. (204.58,638.04) -- cycle ;
\draw  [fill={rgb, 255:red, 0; green, 0; blue, 0 }  ,fill opacity=1 ] (208.48,670.16) .. controls (209.71,670.05) and (210.8,670.97) .. (210.9,672.21) .. controls (211.01,673.45) and (210.09,674.54) .. (208.85,674.64) .. controls (207.61,674.74) and (206.52,673.82) .. (206.42,672.58) .. controls (206.32,671.35) and (207.24,670.26) .. (208.48,670.16) -- cycle ;
\draw [color={rgb, 255:red, 255; green, 0; blue, 0 }  ,draw opacity=1 ]   (269.82,669.28) -- (242.26,690.2) ;
\draw  [fill={rgb, 255:red, 0; green, 0; blue, 0 }  ,fill opacity=1 ] (271.66,667.99) .. controls (272.38,669) and (272.13,670.41) .. (271.12,671.12) .. controls (270.1,671.84) and (268.7,671.59) .. (267.98,670.58) .. controls (267.27,669.56) and (267.51,668.16) .. (268.53,667.44) .. controls (269.54,666.73) and (270.95,666.97) .. (271.66,667.99) -- cycle ;
\draw  [fill={rgb, 255:red, 0; green, 0; blue, 0 }  ,fill opacity=1 ] (245.94,687.6) .. controls (246.66,688.62) and (246.41,690.02) .. (245.4,690.74) .. controls (244.38,691.46) and (242.98,691.21) .. (242.26,690.2) .. controls (241.55,689.18) and (241.79,687.78) .. (242.81,687.06) .. controls (243.82,686.35) and (245.23,686.59) .. (245.94,687.6) -- cycle ;
\draw [color={rgb, 255:red, 255; green, 0; blue, 0 }  ,draw opacity=1 ]   (179.57,735.53) -- (152.01,756.45) ;
\draw  [fill={rgb, 255:red, 0; green, 0; blue, 0 }  ,fill opacity=1 ] (181.41,734.24) .. controls (182.13,735.25) and (181.88,736.66) .. (180.87,737.37) .. controls (179.85,738.09) and (178.45,737.84) .. (177.73,736.83) .. controls (177.02,735.81) and (177.26,734.41) .. (178.28,733.69) .. controls (179.29,732.98) and (180.7,733.22) .. (181.41,734.24) -- cycle ;
\draw  [fill={rgb, 255:red, 0; green, 0; blue, 0 }  ,fill opacity=1 ] (155.69,753.85) .. controls (156.41,754.87) and (156.16,756.27) .. (155.15,756.99) .. controls (154.13,757.71) and (152.73,757.46) .. (152.01,756.45) .. controls (151.3,755.43) and (151.54,754.03) .. (152.56,753.31) .. controls (153.57,752.6) and (154.98,752.84) .. (155.69,753.85) -- cycle ;
\draw [color={rgb, 255:red, 255; green, 0; blue, 0 }  ,draw opacity=1 ]   (215.8,751.81) -- (220.34,786.11) ;
\draw  [fill={rgb, 255:red, 0; green, 0; blue, 0 }  ,fill opacity=1 ] (215.58,749.57) .. controls (216.82,749.45) and (217.92,750.36) .. (218.04,751.6) .. controls (218.16,752.83) and (217.25,753.93) .. (216.02,754.05) .. controls (214.78,754.17) and (213.68,753.27) .. (213.56,752.03) .. controls (213.44,750.79) and (214.35,749.69) .. (215.58,749.57) -- cycle ;
\draw  [fill={rgb, 255:red, 0; green, 0; blue, 0 }  ,fill opacity=1 ] (219.91,781.63) .. controls (221.15,781.51) and (222.25,782.42) .. (222.37,783.65) .. controls (222.48,784.89) and (221.58,785.99) .. (220.34,786.11) .. controls (219.11,786.23) and (218.01,785.32) .. (217.89,784.09) .. controls (217.77,782.85) and (218.67,781.75) .. (219.91,781.63) -- cycle ;
\draw [color={rgb, 255:red, 255; green, 0; blue, 0 }  ,draw opacity=1 ]   (249.02,728.32) -- (280.41,742.88) ;
\draw  [fill={rgb, 255:red, 0; green, 0; blue, 0 }  ,fill opacity=1 ] (247.02,727.3) .. controls (247.58,726.19) and (248.93,725.75) .. (250.04,726.31) .. controls (251.15,726.88) and (251.59,728.23) .. (251.03,729.34) .. controls (250.47,730.45) and (249.11,730.89) .. (248.01,730.33) .. controls (246.9,729.76) and (246.45,728.41) .. (247.02,727.3) -- cycle ;
\draw  [fill={rgb, 255:red, 0; green, 0; blue, 0 }  ,fill opacity=1 ] (276.39,740.84) .. controls (276.96,739.73) and (278.31,739.29) .. (279.42,739.85) .. controls (280.53,740.42) and (280.97,741.77) .. (280.41,742.88) .. controls (279.85,743.99) and (278.49,744.43) .. (277.38,743.87) .. controls (276.28,743.3) and (275.83,741.95) .. (276.39,740.84) -- cycle ;
\draw [color={rgb, 255:red, 255; green, 0; blue, 0 }  ,draw opacity=1 ]   (147.02,681.57) -- (178.41,696.13) ;
\draw  [fill={rgb, 255:red, 0; green, 0; blue, 0 }  ,fill opacity=1 ] (145.02,680.55) .. controls (145.58,679.44) and (146.93,679) .. (148.04,679.56) .. controls (149.15,680.13) and (149.59,681.48) .. (149.03,682.59) .. controls (148.47,683.7) and (147.11,684.14) .. (146.01,683.58) .. controls (144.9,683.01) and (144.45,681.66) .. (145.02,680.55) -- cycle ;
\draw  [fill={rgb, 255:red, 0; green, 0; blue, 0 }  ,fill opacity=1 ] (174.39,694.09) .. controls (174.96,692.98) and (176.31,692.54) .. (177.42,693.1) .. controls (178.53,693.67) and (178.97,695.02) .. (178.41,696.13) .. controls (177.85,697.24) and (176.49,697.68) .. (175.38,697.12) .. controls (174.28,696.55) and (173.83,695.2) .. (174.39,694.09) -- cycle ;
\draw [color={rgb, 255:red, 0; green, 0; blue, 255 }  ,draw opacity=1 ]   (208.66,672.4) -- (244.1,688.9) ;
\draw [color={rgb, 255:red, 0; green, 0; blue, 255 }  ,draw opacity=1 ]   (176.4,695.11) -- (179.57,735.53) ;
\draw [color={rgb, 255:red, 0; green, 255; blue, 0 }  ,draw opacity=1 ]   (176.4,695.11) -- (208.66,672.4) ;
\draw [color={rgb, 255:red, 0; green, 255; blue, 0 }  ,draw opacity=1 ]   (244.1,688.9) -- (249.02,728.32) ;
\draw [color={rgb, 255:red, 0; green, 0; blue, 255 }  ,draw opacity=1 ]   (179.57,735.53) -- (215.8,751.81) ;
\draw [color={rgb, 255:red, 0; green, 255; blue, 0 }  ,draw opacity=1 ]   (249.02,728.32) -- (215.8,751.81) ;
\draw  [fill={rgb, 255:red, 0; green, 0; blue, 0 }  ,fill opacity=1 ] (181.41,734.24) .. controls (182.13,735.25) and (181.88,736.66) .. (180.87,737.37) .. controls (179.85,738.09) and (178.45,737.84) .. (177.73,736.83) .. controls (177.02,735.81) and (177.26,734.41) .. (178.28,733.69) .. controls (179.29,732.98) and (180.7,733.22) .. (181.41,734.24) -- cycle ;
\draw  [fill={rgb, 255:red, 0; green, 0; blue, 0 }  ,fill opacity=1 ] (178.24,693.81) .. controls (178.96,694.83) and (178.71,696.23) .. (177.7,696.95) .. controls (176.68,697.66) and (175.28,697.42) .. (174.56,696.41) .. controls (173.85,695.39) and (174.09,693.99) .. (175.11,693.27) .. controls (176.12,692.56) and (177.52,692.8) .. (178.24,693.81) -- cycle ;
\draw  [fill={rgb, 255:red, 0; green, 0; blue, 0 }  ,fill opacity=1 ] (210.5,671.1) .. controls (211.22,672.12) and (210.97,673.52) .. (209.96,674.24) .. controls (208.94,674.95) and (207.54,674.71) .. (206.82,673.69) .. controls (206.11,672.68) and (206.35,671.27) .. (207.37,670.56) .. controls (208.38,669.84) and (209.79,670.09) .. (210.5,671.1) -- cycle ;
\draw  [fill={rgb, 255:red, 0; green, 0; blue, 0 }  ,fill opacity=1 ] (245.94,687.6) .. controls (246.66,688.62) and (246.41,690.02) .. (245.4,690.74) .. controls (244.38,691.46) and (242.98,691.21) .. (242.26,690.2) .. controls (241.55,689.18) and (241.79,687.78) .. (242.81,687.06) .. controls (243.82,686.35) and (245.23,686.59) .. (245.94,687.6) -- cycle ;
\draw  [fill={rgb, 255:red, 0; green, 0; blue, 0 }  ,fill opacity=1 ] (217.64,750.52) .. controls (218.35,751.53) and (218.11,752.94) .. (217.1,753.65) .. controls (216.08,754.37) and (214.68,754.13) .. (213.96,753.11) .. controls (213.24,752.09) and (213.49,750.69) .. (214.5,749.97) .. controls (215.52,749.26) and (216.92,749.5) .. (217.64,750.52) -- cycle ;
\draw  [fill={rgb, 255:red, 0; green, 0; blue, 0 }  ,fill opacity=1 ] (250.86,727.02) .. controls (251.58,728.04) and (251.34,729.44) .. (250.32,730.16) .. controls (249.3,730.88) and (247.9,730.63) .. (247.18,729.62) .. controls (246.47,728.6) and (246.71,727.2) .. (247.73,726.48) .. controls (248.74,725.77) and (250.15,726.01) .. (250.86,727.02) -- cycle ;

\draw (65.5,704.52) node [anchor=north west][inner sep=0.75pt]   [align=left] {$\mathfrak{G}_5$};

\end{tikzpicture}

\caption{The contribution of $\mathfrak{G}_5$ is $0$.}\label{graph5}
\end{figure}
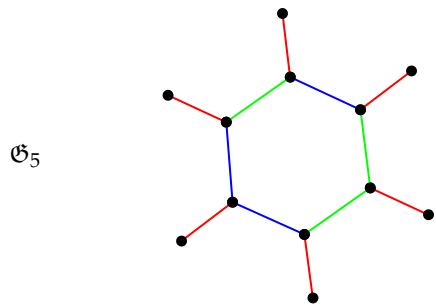
\end{center}
\end{itemize}

\begin{bibdiv}
\begin{biblist}

\bibitem{Fedosov-DQ}
B.~Fedosov.
\newblock A simple geometrical construction of deformation quantization.
\newblock {\em J. Differential Geom.}, 40(2):213--238, 1994.

\bibitem{Fedbook}
B.~Fedosov.
\newblock {\em Deformation quantization and index theory}, volume~9.
\newblock Akademie Verlag Berlin, 1996.

\bibitem{fe:g-index}
B.~Fedosov.
  {\it On $G$-trace and $G$-index in deformation quantization},
  Conference Mosh\'e Flato 1999 (Dijon), Lett. Math. Phys.
  \textbf{52} , no. 1, 29--49 (2002).

\bibitem{fedosov-bundle}
B.~Fedosov.
\newblock The {A}tiyah-{B}ott-{P}atodi method in deformation quantization.
\newblock {\em Comm. Math. Phys.}, 209(3):691--728, 2000.

\bibitem{fst}
  B.~Fedosov, B.W.~Schulze, N.~Tarkhanov:
  {\it On index theorem for symplectic orbifolds},
  Ann. Inst. Fourier (Grenoble)  \textbf{54},  no. 5, 1601--1639 (2004).

\bib{trace}{article}{
   author={B. Feigin},
   author={G. Felder},
   author={B. Shoikhet},
   title={Hochschild cohomology of the Weyl algebra and traces in deformation quantization},
   journal={Duke Mathematical Journal},
   volume={127},
   date={2005},
}

\bibitem{bessis1980quantum}
D. Bessis, C. Itzykson, and J.B. Zuber.
\newblock Quantum field theory techniques in graphical enumeration.
\newblock {\em Advances in Applied Mathematics}, 1(2):109--157, 1980.

\bibitem{getzler1993cartan}
E.~Getzler.
\newblock Cartan homotopy formulas and the {G}auss-{M}anin connection in cyclic
  homology.
\newblock In {\em Israel Math. Conf. Proc}, volume~7, pages 65--78, 1993.

\bibitem{Hirzebruch1994Manifolds}
F.~Hirzebruch, T.~Berger, and R.~Jung.
\newblock {\em Manifolds and {M}odular {F}orms}.
\newblock 1994.

\bibitem{GJ}
E.Getzler,  J.D.S. Jones. {\it Operads, homotopy algebra and iterated integrals for double loop spaces.} arXiv preprint hep-th/9403055 (1994).

\bib{brodzki2016periodic}{article}{
title={The periodic cyclic homology of crossed products of finite type algebras}, 
author={J. Brodzki},
author={S. Dave},
author={V. Nistor}, 
  date={2016},
  note={arXiv:1509.03662 [math.KT]},
}

\bib{fe:refof}{book}{
   title={The Heat Kernel Lefschetz Fixed Point Formula for the Spin-c Dirac Operator},
author={J. J. Duistermaat},
series={Modern Birkhäuser Classics},
publisher={Birkhäuser Boston, MA},
 date={1996},
}

\bibitem{Kevin-book}
K.~Costello.
\newblock Renormalization and {E}ffective {F}ield {T}heory.
\newblock {\em Mathematical Surveys and Monographs}, Mar 2011.

\bibitem{kevin-owen}
K.~Costello and O.~Gwilliam.
\newblock {\em Factorization algebras in quantum field theory. {V}ol. 1,2},
  volume~31 of {\em New Mathematical Monographs}.
\newblock Cambridge University Press, Cambridge, 2017.

\bibitem{Alvarez}
L.~Alvarez-Gaum\'{e}.
\newblock Supersymmetry and the {A}tiyah-{S}inger index theorem.
\newblock {\em Communications in Mathematical Physics}, 90(2):161--173, 1983.

\bib{AKSZ}{article}{
   author={Alexandrov, M.},
   author={Schwarz, A.},
   author={Zaboronsky, O.},
   author={Kontsevich, M.},
   title={The geometry of the master equation and topological quantum field
   theory},
   journal={Internat. J. Modern Phys. A},
   volume={12},
   date={1997},
   number={7},
   pages={1405--1429},
}

\bibitem{crainic} M. Crainic, 
  {\it Cyclic cohomology of \'{e}tale groupoids: the general case},
  $K$-theory \textbf{17}, 319--362. (1999).

\bibitem{Pflaum2007An}
M.J.Pflaum, H.B.Posthuma, and X.Tang.
\newblock An algebraic index theorem for orbifolds.
\newblock {\em Advances in Mathematics}, 210(1):83--121, 2007.

\bibitem{Kontsevich-diagram}
M. Kontsevich. 
\newblock {\em Feynman diagrams and low-dimensional topology.}
\newblock First European Congress of Mathematics, Vol. II (Paris, 1992), 1994, pp. 97–121

\bib{connection-orbifold}{article}{
    AUTHOR = {M.~Pflaum},
     TITLE = {On the deformation quantization of symplectic orbispaces},
   JOURNAL = {Differential Geometry and its Applications}, 
   volume = {19},
number = {3},
pages = {343-368},
year = {2003}, 
}

\bibitem{nppt}
  N. Neumaier, M.~Pflaum, H.~Posthuma and X.~Tang:
  {\it Homology of of formal deformations of proper \'etale Lie groupoids},
  {\tt arXiv:math.KT/0412462}, to appear in Journal f\"ur die Reine 
  u.~Angewandte   Mathematik

\bibitem{GLL}
R.~E. Grady, Q.~Li, and S.~Li.
\newblock Batalin--{V}ilkovisky quantization and the algebraic index.
\newblock {\em Advances in Mathematics}, 317:575--639, 2017.

\bibitem{Nest-Tsygan}
R.~Nest and B.~Tsygan.
\newblock Algebraic index theorem.
\newblock {\em Comm. Math. Phys.} 172(2):223--262, 1995.

  \bibitem{axelrod1994chern}
S. Axelrod, I. M. Singer. {\it Chern-Simons perturbation theory. II.} Journal of Differential Geometry 39.1 (1994): 173-213.

\bib{Localized}{article}{
   author={Gui, Z.},
   author={Li, S.},
   author={Xu, K.},
   title={Geometry of Localized Effective Theories, Exact Semi-classical Approximation and the Algebraic Index},
   journal={Commun. Math. Phys.},
   volume={382},
   date={2021},
   pages={441--483},
}

\end{biblist}
\end{bibdiv}

\end{document}